\documentclass[11pt]{amsart}

\usepackage{amsmath,amssymb,amsthm}
\usepackage{mathrsfs}
\usepackage{mathtools}
\usepackage{microtype}
\usepackage{geometry}
\usepackage{booktabs}
\usepackage[colorlinks=true,linkcolor=blue,citecolor=blue,urlcolor=blue]{hyperref}
\usepackage{cleveref}
\newtheorem{theorem}{Theorem}[section]
\newtheorem{proposition}[theorem]{Proposition}
\newtheorem{lemma}[theorem]{Lemma}
\newtheorem{corollary}[theorem]{Corollary}
\theoremstyle{definition}
\newtheorem{definition}[theorem]{Definition}

\theoremstyle{remark}
\newtheorem{remark}[theorem]{Remark}
\newtheorem{notation}[theorem]{Notation}

\newcommand{\C}{\mathbb{C}}
\newcommand{\R}{\mathbb{R}}
\newcommand{\Z}{\mathbb{Z}}

\newcommand{\g}{\mathfrak{g}}

\newcommand{\Der}{\mathrm{Der}}
\newcommand{\pa}{\partial}
\newcommand{\hDer}{\widehat{\Der(A)}}
\newcommand{\hg}{\widehat{\g\otimes A}}
\newcommand{\ox}{\otimes}
\newcommand{\ol}[1]{\overline{#1}}
\newcommand{\sltwo}{\mathfrak{sl}_2}

\DeclareMathOperator{\Span}{Span}

\title[Orthogonal polynomials and superelliptic UCE centres]
{Orthogonal polynomials and the centres of the universal\\
central extensions of superelliptic Krichever--Novikov algebras}

\author[Albino dos Santos]{Felipe Albino dos Santos}
\address{Universidade Presbiteriana Mackenzie,
Faculdade de Computa\c{c}\~ao e Inform\'atica, Brazil}
\email{falbinosantos@gmail.com}

\date{\today}

\keywords{superelliptic curve; Krichever--Novikov algebra; universal central
extension; ultraspherical polynomials; associated orthogonal polynomials;
Legendre polynomials; K\"ahler differentials; cocycle}
\subjclass[2020]{17B65, 17B56, 33C45, 14H99, 17B10}

\begin{document}
\begin{abstract}
We study the universal central extensions of the derivation and current
algebras of a superelliptic curve $u^m=P(x)$, for $P$ palindromic of degree
$2r$. Both centres are finite-dimensional and canonically isomorphic. On the
derivation side the dimension is known at every covering degree, while reducing
an arbitrary class has been done only for the double cover $m=2$; we do it at
every $m$. The spanning classes split into $r(m-1)$ chains, each an associated
ultraspherical family whose parameters come from the sector and the residue,
and we determine which chains carry a positive orthogonality measure. For
quadratic $P$ we evaluate the mixed cocycle in closed form, as a combination of
two Legendre antiderivatives, at every pair of generators of total index at
least three.
\end{abstract}

\maketitle
\setcounter{tocdepth}{1}
\tableofcontents

\section{Introduction}
\label{sec:intro}

This paper joins two subjects that developed independently, the classical
theory of orthogonal polynomials and the theory of infinite-dimensional Lie
algebras attached to algebraic curves.

The first goes back to Legendre, who introduced the polynomials now carrying
his name while studying gravitational potentials. The classical families and
their orthogonality relations were codified during the nineteenth century and
brought into modern form in Szeg\H{o}'s monograph \cite{Szego1939}. What makes
a family classical is a pair of equivalent conditions, a three-term recurrence
with coefficients affine in the index and a second-order differential equation
\cite[\S 3.2]{Szego1939}. The Legendre polynomials $p_n$ are the case we shall
meet, characterised by
\[
  (n+1)p_{n+1}(a) = (2n+1)a\,p_n(a) - n\,p_{n-1}(a),
  \qquad p_0(a)=1,\quad p_1(a)=a,
\]
together with the generating function $\sum_{n\geq0}p_n(a)z^n=(1-2az+z^2)^{-1/2}$
\cite[(4.7.23)]{Szego1939}.

The second subject begins with Krichever and Novikov, who introduced algebras
of meromorphic vector fields on a compact Riemann surface with poles confined
to two marked points, as higher-genus analogues of the Witt and Virasoro
algebras \cite{KrichNovikov1987}. Schlichenmaier extended the construction to
finitely many marked points and supplied the almost-graded structure that makes
the representation theory tractable
\cite{Schlichenmaier1990a,Schlichenmaier1990b,Schlichenmaier2003}; a systematic
account is \cite{Schlichenmaier2014}. For a commutative ring $R$ of functions on
such a curve and a finite-dimensional simple Lie algebra $\g$, the current
algebra $\g\otimes R$ is perfect, so it has a
universal central extension, and Kassel and Loday identified that extension and
its centre $\Omega^1_R/dR$, the K\"ahler differentials modulo exact forms
\cite{KasselLoday1982}.

The two subjects meet over the superelliptic curve $u^m=P(x)$, whose coordinate
ring we write $A$ throughout. For the current
algebra, the centre $\Omega^1_A/dA$ was computed and given an explicit basis in
\cite[Theorem~2.4]{Santos2022}. For the derivation algebra of the same
coordinate ring, Cox, Guo, Lu and Zhao showed that every derivation is a
multiple of one distinguished derivation, and computed the dimension of the
corresponding centre $A/\partial A$ at every covering degree
\cite[Lemma~3]{CoxGuoLuZhao2017}, \cite[Theorem~9]{CoxGuoLuZhao2017}. Their
proof produces the differentiation formulas
\cite[(3.1)--(3.2)]{CoxGuoLuZhao2017} and then uses them to count, and the
sentence that closes the argument is the one this paper starts from: knowing the
dimension, one still has to compute the class of an arbitrary $f$, and that,
they write, ``is actually not easy''. It is not carried out there; at $m=2$ it is
carried out by Cox and Zhao \cite{CoxZhao2018}. That orthogonal polynomials appear in these
centres at all was first noticed by Cox, Futorny and Tirao for the
Date--Jimbo--Kashiwara--Miwa (DJKM) algebras \cite{CoxFutornyTirao2013}, and was developed for the superelliptic
current algebra in \cite{SantosNeklyudovFutorny2025}, where the structure
constants of the cocycle were computed in closed form. A representation-theoretic
account of the same phenomenon on that side, in which a Sugawara element acting
on a Verma module of the Heisenberg subalgebra of $\widehat{\g\otimes A}$ is
carried by an explicit intertwiner to the classical Legendre differential
operator, is given for the genus-zero four-point curve in
\cite{P4_RepMechanism}.

On the derivation side the double cover is understood in detail, and the account
this paper builds on is that of Cox and Zhao \cite{CoxZhao2018}. For $u^2=P(x)$
with $P$ of arbitrary degree they compute the reduction relations in $A/\pa A$
and the values of the cocycle on a basis \cite[\S2.1]{CoxZhao2018}; they work the
quadratic case out completely, obtaining the three-dimensional centre, the
Legendre recurrence and the value of the cocycle on every pair of generators
\cite[\S2.2]{CoxZhao2018}; and for the palindromic polynomial
$P(x)=x^{2r}-2ax^{r}+1$ they show that the centre has dimension $2r+1$ and that
the classes $[x^k]$ split by residue modulo $r$ into families of associated
Legendre polynomials \cite[\S2.4]{CoxZhao2018}. Everything the present paper
recalls about $m=2$ is theirs; what it adds at $m=2$ is the evaluation of the
mixed cocycle and the positivity criterion.

What their account leaves open is the covering degree. It uses $u^2=P$ at every
step, through $\pa=u\,d/dx$ and $u\cdot u=P$, and the polynomials it produces are
Legendre because a double cover is what puts $\tfrac12$ in the ultraspherical
parameter. The present paper removes that restriction, and it carries the cocycle
computation one step further than its source. We take $P$ palindromic throughout,
so that the branch locus is symmetric under $x\mapsto1/x$, and we keep the
covering degree general where the argument allows it.

Both centres have the same dimension. Writing $n=\deg P$, one has
$\dim_\C A/\partial A = 1+n(m-1)$ by \cite[Theorem~9]{CoxGuoLuZhao2017} and
$\dim_\C\Omega^1_A/dA = 1+n(m-1)$ by \cite[Theorem~2.4]{Santos2022}. What
distinguishes the two extensions is therefore not the size of the centre. It is
that only the derivation extension carries a cocycle mixing its two families of
generators, and we evaluate that cocycle in closed form.

The mechanism on the derivation side is elementary and worth stating at once.
The distinguished derivation is $\partial=u^{m-1}d/dx$, and in the quotient
$A/\partial A$ every class $[\partial(h)]$ vanishes. Differentiating the
monomials $x^ku^{j}$ therefore produces linear relations among the spanning
classes, and a one-line application of the Leibniz rule, carried out in
Lemma~\ref{lem:partial_general}, shows that each relation involves exactly three
of them and stays inside one sector of the $\Z/m\Z$-grading. For the quadratic
palindromic polynomial $P(x)=x^2-2ax+1$ with $a\neq\pm1$ and $m=2$ this gives
\[
  \partial(x^ku) = (k+1)x^{k+1} - a(2k+1)x^k + k\,x^{k-1},
\]
so that in $A/\partial A$
\begin{equation}\label{eq:rec_intro}
  (k+1)[x^{k+1}] - a(2k+1)[x^k] + k[x^{k-1}] = 0 .
\end{equation}
This is the Legendre recurrence, term for term, with the branch-point
parameter $a$ playing the role of the polynomial variable. Since
$[x^0]$ and $[x^1]=a[x^0]$ match $p_0(a)$ and $p_1(a)$, uniqueness of the
solution of a three-term recurrence forces $[x^k]=p_k(a)[x^0]$ for every
$k\geq0$. The same relation run downwards gives a second, independent chain on
the negative powers, and the classes $[x^ju^{m-1}]$ contribute one further
generator, which accounts for all three dimensions.

At a general covering degree the same computation gives, for
$P=1-2cx^r+x^{2r}$, whose parameter $c$ plays the role that $a$ played above,
and for each $l$ with $0\leq l\leq m-2$,
\begin{equation}\label{eq:rec_intro_general}
  mk\,[x^{k-1}u^{l}] - 2c\,(mk+(l+1)r)\,[x^{k+r-1}u^{l}]
    + (mk+2(l+1)r)\,[x^{k+2r-1}u^{l}] = 0 ,
\end{equation}
while the top sector collapses to the single class $[x^{-1}u^{m-1}]$. The three
exponents are congruent modulo $r$, so the spanning classes fall into
$r(m-1)$ chains indexed by a sector and a residue, and
Theorem~\ref{thm:chains} identifies each of them. Written in monic form, the
chain attached to the sector $l$ and the residue $b$ is the ultraspherical
family of parameter $(l+1)/m$, associated with parameter $(b+1-r)/r$. The
covering degree of the curve is thus visible in the centre as the denominator of
the ultraspherical parameter, the degree of $P$ as twice the number of chains
per sector, and the Legendre family of \eqref{eq:rec_intro} is the single case
$m=2$, $r=1$.

From this identification we obtain the generating function of the classes and
the first-order differential equation it satisfies, and then the evaluation of
the mixed cocycle. Let $e_k=x^k\partial$ and $f_k=x^ku\partial$ span
$\Der(A)$ for $m=2$, and let $\psi$ be the canonical cocycle of the universal
central extension of $\Der(A)$. On the mixed pair we prove in
Theorem~\ref{thm:antiderivative} that
\[
  \psi(e_1,f_s) = \frac{p_{n-1}(a)-p_{n+1}(a)}{2n+1}\,[x^0],
  \qquad n=s+1,
\]
and that the scalar appearing here is exactly $\int_a^1p_n(t)\,dt$. The cocycle
of a Lie algebra thus evaluates a classical definite integral, and the proof
runs through the structure constants of the algebra rather than through
integration. Theorem~\ref{thm:cross_closed} carries this to every pair of
generators, where the value is a combination of the two antiderivatives
$\int_a^1p_n$ and $\int_a^1p_{n-2}$ with explicit rational coefficients. Cox and
Zhao write the five-term expression that these two theorems evaluate.

It may help to say in one place what is proved here and what is recalled. Three
things are proved. Theorem~\ref{thm:chains} describes $A/\pa A$ for
$u^m=1-2cx^r+x^{2r}$ at every $m$ and every $r$, splitting the spanning classes
into $r(m-1)$ chains and identifying each as an associated ultraspherical family
whose two parameters are read off from the sector and the residue; its case
$m=2$ is \cite[\S2.4]{CoxZhao2018}, and what is added is everything above the
double cover together with the positivity criterion in part~(5).
Theorems~\ref{thm:antiderivative} and~\ref{thm:cross_closed} evaluate the mixed
cocycle in closed form. Proposition~\ref{prop:centres_iso} identifies the centres
of the two universal central extensions canonically, which is what makes the two
halves of the paper statements about one object. Everything else below concerning
$m=2$ is recalled from \cite{CoxZhao2018}, with attribution at the point of
use. The Cox--Zhao statements that Section~\ref{sec:cross} starts from are
gathered in Section~\ref{sec:quad_der};
Section~\ref{sec:quartic} does give arguments at $m=2$, as worked instances of
the general-$m$ results.





Section~\ref{sec:setup} fixes the curve, the coordinate ring, the two
extensions and the facts about Legendre polynomials that we use.
Section~\ref{sec:quad_der} collects what is used from \cite{CoxZhao2018} at
$m=2$: the Legendre identification and the two cocycle formulas, stated with
locators. Section~\ref{sec:cross} evaluates the mixed cocycle
in closed form at every pair of generators.
Section~\ref{sec:SNF} carries the argument to the current algebra.
Section~\ref{sec:comparison_dict} compares the two extensions.
Section~\ref{sec:chains} proves the chain decomposition at every covering
degree. Section~\ref{sec:quartic} works out the quartic instance, where
classicality first fails, and isolates the role of palindromic symmetry.

\section{Setup and notation}
\label{sec:setup}

\subsection{The curve and its coordinate ring}
\label{subsec:curve}

Fix an integer $m\geq2$, the \emph{covering degree}, and a polynomial
$P(x)\in\C[x]$ of degree $n$ with no repeated roots and with $P(0)\neq0$. The
equation $u^m=P(x)$ defines a \emph{superelliptic curve} $\mathscr C$, and its
\emph{coordinate ring} is
\[
  A = \C[x^{\pm1},u]\big/(u^m-P(x)),
\]
the ring of regular functions on the affine part of $\mathscr C$ with the
fibres over $x=0$ and $x=\infty$ removed. The ring $A$ is $\Z/m\Z$-graded,
\[
  A=\bigoplus_{l=0}^{m-1}A^l,\qquad A^l=\C[x^{\pm1}]\cdot u^l ,
\]
and this grading is used throughout.

The projection $x\colon\mathscr C\to\mathbb P^1$ presents $\mathscr C$ as an
$m$-sheeted covering of the projective line, ramified over the $n$ roots of $P$
and, when $\gcd(m,n)<m$, over $x=\infty$. Since $P(0)\neq0$ the fibre over
$x=0$ is unramified and consists of $m$ points, while the fibre over
$x=\infty$ consists of $\gcd(m,n)$ points. The algebra $A$ is therefore the
ring of functions regular away from $s=m+\gcd(m,n)$ marked points of the
smooth model $X$, so it is a Krichever--Novikov algebra in the multipoint
framework of Schlichenmaier
\cite{Schlichenmaier1990a,Schlichenmaier1990b,Schlichenmaier2014}. By Riemann--Hurwitz applied to this covering the genus of $X$ is
\[
  g = \tfrac12\bigl[\,m(n-1)-n-\gcd(m,n)\,\bigr]+1 .
\]

A polynomial $P(x)\in\C[x]$ of degree $n$ is \emph{palindromic} (or
\emph{self-reciprocal}) if $P(x)=x^{n}P(1/x)$; equivalently, its coefficient
sequence is symmetric. Except where a general $P$ is explicitly allowed, we work
throughout with the palindromic polynomial
\begin{equation}\label{eq:palindromic_P}
  P(x) = 1 - 2c\,x^r + x^{2r},\qquad c\in\C,\quad c\neq\pm1,\quad r\geq1,
\end{equation}
of degree $n=2r$, which has no repeated roots for $c\neq\pm1$ and satisfies
$P(0)=1\neq0$.

For $m=2$ the marked-point count is $s=2+\gcd(2,2r)=4$ for \emph{every} $r$, and
the genus is $g=r-1$. So the whole palindromic family at $m=2$ consists of
four-point algebras, of which only $r=1$ is the genus-zero four-point algebra of
Bremner \cite[Prop.~1.1]{Bremner1995}; the quartic case $r=2$ is a four-point
algebra on a curve of genus one, and $r\geq3$ gives four points on a curve of
genus $r-1$. The four-point count is a feature of $m=2$.

The decomposition of the centres proved in Section~\ref{sec:chains} holds
for every $m$ and every $r$, as do the setup and the results of
Section~\ref{sec:SNF}. The evaluation of the cocycle in
Section~\ref{sec:cross} and the worked cases $r=1$ and $r=2$ of
Sections~\ref{sec:quad_der} and~\ref{sec:quartic} are at $m=2$, and the
hypotheses are stated at each occurrence. In the quadratic case we write $P(x)=x^2-2ax+1$, which is
\eqref{eq:palindromic_P} at $r=1$; the parameter $c$ of
\eqref{eq:palindromic_P} and the parameter $a$ of the quadratic normalisation
are the same quantity, and we use whichever letter the surrounding computation
makes natural. The parameter $a$ is then the half-sum of the two roots of $P$, that is,
the point the branch locus is centred on.

\subsection{The distinguished derivation}

\begin{definition}\label{def:partial}
The \emph{distinguished derivation} of $A$ is
\[
  \pa \coloneqq u^{m-1}\frac{d}{dx}\in\Der(A).
\]
\end{definition}

That $\pa$ is well defined requires $\pa(u^m-P(x))=0$, which holds because
$u^m=P(x)$ gives $m\,u^{m-1}\,du = P'(x)\,dx$ and hence
$\pa(u)=u^{m-1}\,du/dx = P'(x)/m$. For $m=2$ this reads $\pa=u\,d/dx$ and
$\pa(u)=P'(x)/2$. Equivalently, $\pa$ is the derivation of $A$ determined by
\begin{equation}\label{eq:partial_values}
  \pa(x)=u^{m-1},\qquad \pa(u)=\tfrac1m P'(x),
\end{equation}
and this is the form in which we shall use it.

The reduction relations of the whole paper come from applying $\pa$ to the
monomials $x^ku^j$ and reading the result in $A/\pa A$. We record that
computation once, at every $m$.

\begin{lemma}[{\cite[(3.1)--(3.2)]{CoxGuoLuZhao2017}}]\label{lem:partial_general}
Let $A=\C[x^{\pm1},u]/(u^m-P)$ and let $1\leq j\leq m-1$. Then for every
$k\in\Z$
\begin{equation}\label{eq:partial_general}
  \pa(x^ku^j) \;=\; u^{j-1}\Bigl(k\,x^{k-1}P(x) + \tfrac{j}{m}\,x^kP'(x)\Bigr),
  \qquad
  \pa(x^k) \;=\; k\,x^{k-1}u^{m-1} .
\end{equation}
In particular $\pa(x^ku^j)$ lies in the sector $A^{\,j-1}$ of the
$\Z/m\Z$-grading, and $\pa(x^k)$ lies in $A^{\,m-1}$.
\end{lemma}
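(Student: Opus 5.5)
The plan is a direct Leibniz-rule computation with the values \eqref{eq:partial_values}. Since $\pa$ is a derivation of $A$, it suffices to know $\pa(x)$ and $\pa(u)$, and hence $\pa(x^{-1})$. From $x\cdot x^{-1}=1$ we get $\pa(x^{-1})=-x^{-2}u^{m-1}$, so the power rule $\pa(x^k)=k\,x^{k-1}\pa(x)=k\,x^{k-1}u^{m-1}$ holds for every $k\in\Z$, including negative $k$. This is the second formula in \eqref{eq:partial_general}. Similarly, $\pa(u^j)=j\,u^{j-1}\pa(u)=\tfrac{j}{m}u^{j-1}P'(x)$ for $j\geq1$.

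For the first formula, apply Leibniz to the product:
\[
  \pa(x^ku^j)=k\,x^{k-1}u^{m-1}\cdot u^{j}+x^k\cdot\tfrac{j}{m}u^{j-1}P'(x).
\]
The only step requiring care is the first term, where $u^{m-1+j}$ appears. Since $j\geq1$, we can write $u^{m-1+j}=u^m\,u^{j-1}=P(x)\,u^{j-1}$ in $A$, using the defining relation. Factoring out $u^{j-1}$ then gives exactly $u^{j-1}\bigl(k\,x^{k-1}P(x)+\tfrac{j}{m}x^kP'(x)\bigr)$. The hypothesis $j\geq1$ is what allows this reduction, and $j\leq m-1$ ensures $u^{j-1}$ is already a reduced power, with exponent between $0$ and $m-2$.

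The grading statements follow by inspection. The bracket $k\,x^{k-1}P+\tfrac{j}{m}x^kP'$ lies in $\C[x^{\pm1}]$, so $\pa(x^ku^j)\in\C[x^{\pm1}]u^{j-1}=A^{j-1}$. Likewise $\pa(x^k)\in\C[x^{\pm1}]u^{m-1}=A^{m-1}$.

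There is no real obstacle here. The one point worth stating explicitly is that $\pa$ extends to the localisation $\C[x^{\pm1}]$, which is justified by the computation of $\pa(x^{-1})$ above, and that $\pa$ is well defined on $A$, which was checked right after Definition~\ref{def:partial}.
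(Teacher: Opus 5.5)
Your proof is correct and is the same argument the paper relies on: the Leibniz rule applied with $\pa(x)=u^{m-1}$ and $\pa(u)=P'(x)/m$, followed by the reduction $u^{m-1+j}=u^{j-1}P(x)$. The paper carries out exactly this computation at $m=2$ in \eqref{eq:partial_xru} and in its K\"ahler form in \eqref{eq:dxu_general}, and the grading claims follow by inspection, as you say.
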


These are equations (3.1) and (3.2) of \cite{CoxGuoLuZhao2017}, written with
$u^{j}$ in place of $P^{j/m}$. We recall the derivation because every relation
below is read off from them.

\noindent
For the palindromic polynomials of this paper the two identities take the
following shape.

\begin{corollary}\label{cor:chain_general}
Let $P(x)=1-2cx^{r}+x^{2r}$ with $r\geq1$. Then in $A/\pa A$
\begin{equation}\label{eq:top_sector}
  [x^{k}u^{\,m-1}] = 0 \qquad (k\neq-1),
\end{equation}
and for every $l$ with $0\leq l\leq m-2$, writing $L=l+1$,
\begin{equation}\label{eq:chain_general}
  mk\,[x^{k-1}u^{l}] \;-\; 2c\,(mk+Lr)\,[x^{k+r-1}u^{l}]
  \;+\; (mk+2Lr)\,[x^{k+2r-1}u^{l}] \;=\; 0,
  \qquad k\in\Z .
\end{equation}
\end{corollary}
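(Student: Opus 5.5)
The plan is to specialise Lemma~\ref{lem:partial_general} to $P(x)=1-2cx^r+x^{2r}$, for which $P'(x)=-2rc\,x^{r-1}+2r\,x^{2r-1}$, and then read each identity in $A/\pa A$, where every class $[\pa(h)]$ vanishes.

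For the top sector \eqref{eq:top_sector} I use the second formula of the lemma, $\pa(x^{k+1})=(k+1)x^{k}u^{m-1}$. For $k\neq-1$ the coefficient $k+1$ is nonzero, so dividing by it gives $[x^ku^{m-1}]=0$. For $k=-1$ the relation is vacuous, which is why that class survives.

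For \eqref{eq:chain_general} I fix $l$ with $0\le l\le m-2$ and apply the first formula with $j=L=l+1$, which lies in $1\le j\le m-1$, and with exponent $k$:
\[
\pa(x^ku^{L}) = u^{l}\Bigl(k\,x^{k-1}(1-2cx^r+x^{2r}) + \tfrac{L}{m}\,x^k\bigl(-2rc\,x^{r-1}+2r\,x^{2r-1}\bigr)\Bigr).
\]
Next I collect powers of $x$. The coefficient of $x^{k-1}$ is $k$. The coefficient of $x^{k+r-1}$ is $-2ck-2cLr/m=-\tfrac{2c}{m}(mk+Lr)$. The coefficient of $x^{k+2r-1}$ is $k+2Lr/m=\tfrac1m(mk+2Lr)$. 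Multiplying through by $m$ and passing to $A/\pa A$ gives exactly \eqref{eq:chain_general}.

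Since $x$ is invertible in $A$, this works for every $k\in\Z$, including negative $k$ and $k=0$; at $k=0$ the first term simply drops out. The only point needing care is the range of $j$. The sector $A^{l}$ with $l\le m-2$ is reached from $j=l+1\le m-1$, so the lemma applies. The top sector $l=m-1$ would require $j=m$, which the lemma excludes, and that case is handled instead by the $\pa(x^k)$ formula as above. There is no real obstacle beyond this bookkeeping of coefficients.
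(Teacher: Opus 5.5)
Your proposal is correct and matches the paper's proof essentially verbatim: the top-sector vanishing comes from $\pa(x^{k+1})=(k+1)x^ku^{m-1}$, and the chain relation comes from Lemma~\ref{lem:partial_general} with $j=l+1$, substituting $P'$, collecting powers of $x$ and multiplying by $m$. Your remarks on the admissible range of $j$ and on the case $k=0$ are accurate, and they make explicit bookkeeping that the paper leaves implicit.
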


\begin{proof}
Since $\pa(x^{k+1})=(k+1)x^ku^{m-1}$ lies in $\pa A$, its class vanishes, which
is \eqref{eq:top_sector}. For \eqref{eq:chain_general} put $j=l+1$ in
\eqref{eq:partial_general} and substitute $P'(x)=-2crx^{r-1}+2rx^{2r-1}$,
\[
  \pa(x^ku^{\,l+1}) = u^{l}\Bigl(k\,x^{k-1} - 2c\bigl(k+\tfrac{Lr}{m}\bigr)x^{k+r-1}
   + \bigl(k+\tfrac{2Lr}{m}\bigr)x^{k+2r-1}\Bigr) .
\]
The left-hand side lies in $\pa A$, so the class of the right-hand side
vanishes; multiplying by $m$ gives \eqref{eq:chain_general}.
\end{proof}

The three exponents in \eqref{eq:chain_general} are congruent modulo $r$, so no
relation couples different residues, and no relation couples different sectors
of the $\Z/m\Z$-grading.

\begin{theorem}\label{thm:der_module}
{\rm\cite[Lemma~3]{CoxGuoLuZhao2017}}
$\Der(A)=A\,\pa$ as an $A$-module, so every derivation of $A$ is $f\pa$ for a
unique $f\in A$.
\end{theorem}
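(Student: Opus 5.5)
Any derivation $D$ of $A$ is determined by its values $D(x)$ and $D(u)$, since $x^{\pm1}$ and $u$ generate $A$ (and $D(x^{-1})=-x^{-2}D(x)$). The only constraint is compatibility with the defining relation:
\[
  m\,u^{m-1}D(u)=P'(x)\,D(x).
\]
The plan is to show that this constraint forces $D(x)=f\,u^{m-1}$ for some $f\in A$. Granting this, we get $D(u)=fP'(x)/m$, so $D$ and $f\pa$ agree on generators by \eqref{eq:partial_values}, and therefore $D=f\pa$.

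Uniqueness is immediate. The ring $A$ is an integral domain, because $u^m-P$ is irreducible when $P$ is squarefree. If $f\pa=0$, then $f\,u^{m-1}=f\pa(x)=0$, and since $u\neq0$ this gives $f=0$.

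For existence, the key step is to use that $P$ and $P'$ are coprime in $\C[x^{\pm1}]$, since $P$ has no repeated roots. Choose $\alpha,\beta\in\C[x^{\pm1}]$ with $\alpha P+\beta P'=1$. Then
\[
  D(x)=\alpha P\,D(x)+\beta P'D(x)=\alpha u^m D(x)+\beta\, m\,u^{m-1}D(u).
\]
Hence
\[
  D(x)=u^{m-1}\bigl(\alpha u\,D(x)+m\beta\,D(u)\bigr),
\]
and we may take $f=\alpha u\,D(x)+m\beta D(u)\in A$. This step is purely algebraic and uses only the constraint together with the B\'ezout identity.

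The main point needing care is the claim that a derivation is determined by the compatibility condition alone, i.e.\ that every pair $(D(x),D(u))$ satisfying the displayed relation extends to a derivation, and conversely that every derivation satisfies it. Both follow from the presentation $A=\C[x^{\pm1},u]/(u^m-P)$: derivations of the localized polynomial ring are free on $\pa/\pa x$ and $\pa/\pa u$, and they descend to the quotient exactly when they preserve the ideal. For the argument we only need the "conversely" direction, which is just applying $D$ to $u^m-P=0$. No sector-by-sector computation via the $\Z/m\Z$-grading is needed, although one could alternatively check the result sector by sector.
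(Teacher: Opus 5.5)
Your proof is correct and self-contained. Applying the Bézout identity $\alpha P+\beta P'=1$ to the constraint $m u^{m-1}D(u)=P'D(x)$ is exactly what is needed.

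The paper gives no proof of its own here; it quotes \cite[Lemma~3]{CoxGuoLuZhao2017}. The closest argument inside the paper is Proposition~\ref{prop:centres_iso}, which works on the dual side. It takes as given that $\Omega^1_A$ is free on $T=dx/u^{m-1}$, justified only geometrically as ``regular and nowhere vanishing''. It then deduces $df=\pa(f)\,T$ by comparing two derivations on generators. Combined with $\Der(A)\cong\mathrm{Hom}_A(\Omega^1_A,A)$, that freeness would give the theorem in one line, with $\pa$ the generator dual to $T$. Its advantage is that both this theorem and the isomorphism $A/\pa A\cong\Omega^1_A/dA$ come from a single fact.

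Your route is more elementary, and it also supplies the algebra that the freeness claim rests on. The same Bézout coefficients realise $T$ as the honest element $\alpha u\,dx+m\beta\,du$ of $\Omega^1_A$, with $u^{m-1}T=dx$ and $(P'/m)T=du$. So no division by the non-unit $u^{m-1}$ is ever needed.

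One simplification: you do not need $A$ to be an integral domain. That claim requires irreducibility of $u^m-P$ (e.g.\ by Eisenstein at a simple root), and you use it silently when you cancel $u^{m-1}$ to get $D(u)=fP'/m$. With your $f$, the constraint gives this directly: $fP'/m=\alpha u\cdot\tfrac1m P'D(x)+\beta P'D(u)=(\alpha u^m+\beta P')D(u)=D(u)$. Uniqueness also goes through without it: if $f\pa=0$ then $fu^{m-1}=0=fP'$, so $f=\alpha u\cdot fu^{m-1}+\beta\cdot fP'=0$. The only input is then $\gcd(P,P')=1$.
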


\begin{notation}\label{not:basis}
For $m=2$ we set $e_k \coloneqq x^k\pa$ and $f_k \coloneqq x^k u\pa$ for
$k\in\Z$. By Theorem~\ref{thm:der_module} these span $\Der(A)$ over $\C$, the
$e_k$ spanning the even part of the $\Z/2\Z$-grading and the $f_k$ the odd
part.
\end{notation}

\subsection{The two universal central extensions}

Both Lie algebras of interest are perfect, so each has a universal central
extension, unique up to isomorphism.

\begin{theorem}\label{thm:UCE_Der}
{\rm\cite[Theorem~9]{CoxGuoLuZhao2017}}
Let $P(x)=x^{l}(x-a_1)\cdots(x-a_n)$ with the $a_i\in\C^{*}$ pairwise
distinct and $l\in\Z_{\geq0}$. The universal central extension of $\Der(A)$ is
$\hDer=\Der(A)\oplus(A/\pa A)$ with bracket
\[
  [f\pa,\,g\pa] = \bigl(f\pa(g)-g\pa(f)\bigr)\pa + \ol{\pa(f)\,\pa^2(g)},
\]
(in the setting of \S\ref{subsec:curve} we have $P(0)\neq0$, so $l=0$, the
$a_i$ are the roots of $P$, and $n=\deg P$ as elsewhere in this paper)
and $\dim_\C A/\pa A = 1+n(m-1)$.
\end{theorem}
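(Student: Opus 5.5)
The statement has two parts: the universal central extension of $\Der(A)=A\pa$ is given by the cocycle $\psi(f\pa,g\pa)=\ol{\pa(f)\pa^2(g)}$ with values in $A/\pa A$, and $\dim_\C A/\pa A=1+n(m-1)$. I would prove them separately, in three steps.

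\textbf{Step 1: $\psi$ is a cocycle.} I would write the bracket as $[f\pa,g\pa]=(f\pa g-g\pa f)\pa$ and work modulo $\pa A$ throughout.

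Skew-symmetry follows from
\[
\pa(f)\pa^2(g)+\pa(g)\pa^2(f)=\pa\bigl(\pa(f)\pa(g)\bigr)\in\pa A .
\]
For the Jacobi identity I would expand $\sum_{\text{cyc}}\psi([f\pa,g\pa],h\pa)$ using only the Leibniz rule for $\pa$. The expected outcome is that it collapses to $\pa$ of an explicit polynomial expression in $f,g,h$ and their $\pa$-derivatives, exactly as in the classical Virasoro computation. Nothing here uses the curve, only that $\pa$ is a derivation of a commutative algebra.

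\textbf{Step 2: universality.} By Theorem~\ref{thm:der_module}, $\Der(A)$ is a Witt-type algebra $A\pa$ attached to a single derivation. I would take an arbitrary $2$-cocycle $\phi$ and show that, after subtracting a coboundary, it factors as $\phi(f\pa,g\pa)=\lambda(\pa(f)\pa^2(g))$ for a linear functional $\lambda$ on $A$ vanishing on $\pa A$.

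The tool is the cocycle identity with one argument specialised to $\pa$ itself and to $x\pa$, together with the $\Z/m\Z$-grading. These should show that $\phi(f\pa,g\pa)$ depends only on the product-type expressions $\pa^i(f)\pa^j(g)$ with $i+j=3$, modulo $\pa A$ and coboundaries.

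I expect this to be the main obstacle. Since $\pa$ is not diagonalizable, there is no grading element to kill off-diagonal terms. What I would try first is the locality argument used for Witt-type algebras (Skryabin's results on $A\pa$ for $\pa A$ of finite codimension): show that $\phi$ is given by a bidifferential operator of bounded order, and then solve the cocycle equations on its symbol.

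Perfectness of $\Der(A)$ is needed, and follows from $[\pa,f\pa]=\pa(f)\pa$ together with $[x\pa, g\pa]$ to generate everything. It ensures the extension is universal once $H_2$ is identified.

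\textbf{Step 3: the dimension.} The sectors are handled by Lemma~\ref{lem:partial_general}.

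In the top sector, $\pa(x^{k+1})=(k+1)x^ku^{m-1}$ leaves only $[x^{-1}u^{m-1}]$. This class is nonzero because its image under the residue at $x=0$ of $f\,dx/u^{m-1}$ is nonzero, and that residue functional kills $\pa A$ since $\pa(g)\,dx/u^{m-1}=dg$.

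In a lower sector $l=j-1$, the relation \eqref{eq:partial_general} has lowest term $k\,P(0)x^{k-1}$ and highest term $(k+jn/m)x^{k+n-1}$, both in $u^{l}$. These coefficients are nonzero except at $k=0$ and possibly at $k=-jn/m$. Reducing from both ends therefore confines the sector to a window of $n$ consecutive exponents. The exceptional values of $k$ only shift the window; they do not enlarge it.

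For independence, the map $f\mapsto f\,dx/u^{m-1}$ identifies $A/\pa A$ with a subspace of $H^1_{dR}$ of the affine curve. Its dimension is $2g+s-1$, and with the genus and marked-point count of \S\ref{subsec:curve} this equals $1+n(m-1)$. Since the upper bound coincides with this, the window monomials must be independent.
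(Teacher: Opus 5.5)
The paper gives no proof of this statement: it quotes it from \cite[Theorem~9]{CoxGuoLuZhao2017}. So your proposal has to stand as a self-contained proof, and it does not yet. Step~1 is fine, and simpler than you expect. Since $\pa(f\pa g-g\pa f)=f\pa^2g-g\pa^2f$, the cyclic sum $\sum_{\rm cyc}(f\pa^2g-g\pa^2f)\pa^2h$ vanishes identically in $A$, not merely modulo $\pa A$.

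The genuine gap is Step~2, which is the whole content of the theorem and is only announced. Universality needs two things. First, every $2$-cocycle on $A\pa$ must be cohomologous to $\lambda\circ\psi$ for some $\lambda\in(A/\pa A)^*$. Second, $\lambda\circ\psi$ must be a coboundary only for $\lambda=0$; equivalently, $\hDer$ itself must be perfect. Perfectness of $\Der(A)$, which you do address, does not give the second, and you argue neither.

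The tools you name cannot carry the first. As you note, $\pa$ is not diagonalizable, and $\mathrm{ad}\,\pa$ and $\mathrm{ad}\,x\pa$ are not grading operators either: both shift the $\Z/m\Z$-degree, e.g.\ $[x\pa,x^k\pa]=(k-1)x^ku^{m-1}\pa$. The grading only splits a cocycle into $m$ homogeneous cocycles, which kills nothing, since $\psi$ itself has components in several degrees. Showing that a cocycle is, modulo coboundaries, of the shape $\lambda(\pa f\,\pa^2g)$ is precisely the hard theorem. Appealing to a locality result for Witt-type algebras turns Step~2 into a citation, as in the paper, and you would still have to verify that result's hypotheses for $(A,\pa)$.

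Step~3 has a smaller, fixable flaw. The spanning half is right: the window $\{-1,0,\dots,n-2\}$ works in every lower sector because $-jn/m\in(-n,0)$. The independence half runs the inequality the wrong way. If $A/\pa A$ is only identified with a \emph{subspace} of $H^1_{dR}$, you learn $\dim A/\pa A\leq 2g+s-1$. That is the upper bound you already had, and nothing forces the window classes to be independent.

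What you need is that $f\mapsto f\,dx/u^{m-1}$ maps \emph{onto} $\Omega^1_A/dA$ with kernel exactly $\pa A$. This is Proposition~\ref{prop:centres_iso}: freeness of $\Omega^1_A$ on $T$, together with $df=\pa(f)\,T$. Combine it with $\Omega^1_A/dA=H^1_{dR}$ for the smooth affine curve and Grothendieck's comparison theorem. That gives $\dim A/\pa A=b_1=2g+s-1=1+n(m-1)$ directly, makes the residue argument for $[x^{-1}u^{m-1}]$ unnecessary, and turns your spanning set into a basis.

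Repaired this way, your dimension argument is a genuinely different and self-contained route. The paper simply quotes the count, and could alternatively combine Proposition~\ref{prop:centres_iso} with Theorem~\ref{thm:SNF_basis}.
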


The canonical $2$-cocycle of this extension is
$\psi\colon\Der(A)\times\Der(A)\to A/\pa A$ given by
\begin{equation}\label{eq:psi_def}
  \psi(f\pa,\,g\pa) = \ol{\pa(f)\,\pa^2(g)} .
\end{equation}
Its values on pairs $e_k,f_s$ drawn from opposite parities are what
Section~\ref{sec:cross} computes.

Let $\g$ be a finite-dimensional simple Lie algebra with invariant bilinear
form $(\cdot,\cdot)$.

\begin{theorem}\label{thm:UCE_gA}
{\rm\cite{KasselLoday1982}}
The current algebra $\g\otimes A$ is perfect, and its universal central
extension is $\hg=(\g\otimes A)\oplus(\Omega^1_A/dA)$ with bracket
\[
  [X\ox f,\,Y\ox g] = [X,Y]\ox fg + (X,Y)\,\ol{f\,dg},
\]
where $\Omega^1_A/dA$ denotes the K\"ahler differentials of $A$ modulo exact
forms.
\end{theorem}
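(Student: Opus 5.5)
The statement is the Kassel--Loday theorem \cite{KasselLoday1982}, and the plan is the standard one. It has three steps: show that $\g\otimes A$ is perfect, show that the displayed bracket defines a central extension, and show that every central extension factors through this one. The last step is done by classifying $2$-cocycles with trivial coefficients.

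\emph{Perfectness.} Since $\g$ is simple we have $[\g,\g]=\g$, so any $X\otimes f$ can be written as $\sum[X_i,Y_i]\otimes f$. Each such term is $[X_i\otimes f,\,Y_i\otimes 1]$, hence $[\g\otimes A,\g\otimes A]=\g\otimes A$.

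\emph{The bracket defines a central extension.} This amounts to checking that
\[
\psi(X\otimes f,Y\otimes g)=(X,Y)\,\overline{f\,dg}
\]
is a $2$-cocycle with values in $\Omega^1_A/dA$.
\begin{itemize}
\item Skew-symmetry: the form $(\cdot,\cdot)$ is symmetric and $f\,dg+g\,df=d(fg)$ is exact, so $\psi$ is alternating.
\item Cocycle identity: the cyclic sum of $\psi([X\otimes f,Y\otimes g],Z\otimes h)$ equals
\[
\overline{([X,Y],Z)\,fg\,dh}+\overline{([Y,Z],X)\,gh\,df}+\overline{([Z,X],Y)\,hf\,dg}.
\]
Invariance of the form makes the three scalar coefficients equal to $([X,Y],Z)$. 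The sum is therefore $([X,Y],Z)\,\overline{d(fgh)}=0$.
\end{itemize}

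\emph{Universality.} Let $V$ be a trivial module and $\omega$ a $2$-cocycle on $\g\otimes A$ with values in $V$. I will show that $\omega$ is cohomologous to $\lambda\circ\psi$ for a unique linear map $\lambda\colon\Omega^1_A/dA\to V$.
\begin{enumerate}
\item Reduce to invariant cocycles. The subalgebra $\g\otimes1$ acts on $H^2(\g\otimes A;V)$, and this action is trivial because inner derivations act trivially on cohomology. The action of $\g\otimes1$ on the cochains is locally finite, being compatible with the filtration by finite-dimensional pieces $\g\otimes W$ with $W\subset A$ finite-dimensional. By complete reducibility over the simple algebra $\g$, the trivial isotypic component can therefore be split off. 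This lets us replace $\omega$ by a cohomologous cocycle that is $\g$-invariant.
\item Extract a scalar kernel. For fixed $f,g\in A$, the map $(X,Y)\mapsto\omega(X\otimes f,Y\otimes g)$ is then an invariant bilinear form on $\g$. It is therefore a multiple of $(\cdot,\cdot)$, so $\omega=(X,Y)\,\varphi(f,g)$ for a bilinear map $\varphi\colon A\times A\to V$.
\item Translate the conditions on $\omega$ into conditions on $\varphi$. Since $(\cdot,\cdot)$ is symmetric and $\omega$ is alternating, $\varphi$ is skew. Choosing $X,Y,Z$ with $([X,Y],Z)\neq0$, the cocycle identity becomes
\[
\varphi(fg,h)+\varphi(gh,f)+\varphi(hf,g)=0 .
\]
\item Show $\varphi$ factors through $\Omega^1_A/dA$. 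Present $\Omega^1_A/dA$ as $A\otimes A$ modulo the relations $1\otimes fg-f\otimes g-g\otimes f$ (Leibniz) and $1\otimes f$ (exactness), with $f\otimes g\mapsto \overline{f\,dg}$. It must be shown that $\varphi$ kills these relations.
\begin{itemize}
\item Setting $g=h=1$ in the identity of step 3 gives $\varphi(f,1)=0$, so $\varphi$ kills exact forms.
\item Combining the identity with skew-symmetry yields the Leibniz rule $\varphi(fg,h)=\varphi(f,gh)+\varphi(g,fh)$.
\end{itemize}
Hence $\varphi=\lambda\circ\overline{(\,\cdot\,)\,d(\,\cdot\,)}$ for a unique $\lambda$, which gives the universal property.
\end{enumerate}

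The main obstacle is step 1 of the universality argument, replacing $\omega$ by a $\g$-invariant cocycle. The algebra $\g\otimes A$ is infinite-dimensional, so the averaging cannot be done naively. I would handle it by working on $\operatorname{Hom}(\Lambda^2(\g\otimes W),V)$ for finite-dimensional $W$. On each such space, complete reducibility gives a projection onto the invariants that commutes with the coboundary, and these projections are compatible as $W$ grows. The algebraic manipulations in steps 3 and 4 are routine once invariance is in hand.
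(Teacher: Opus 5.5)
\textbf{Verdict.} The paper gives no proof of this statement. It quotes it from \cite{KasselLoday1982} and uses it as a black box, so there is no internal argument to compare with. Your outline is correct in substance, but two statements in it are wrong as written and need repair.

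\textbf{Comparison with the usual route.} Your argument is the cohomological dual of the usual proof. That proof works on the homological side: it takes Garland's model $\Lambda^2L/\partial(\Lambda^3L)$ of the universal central extension of the perfect algebra $L=\g\ox A$, and computes its kernel $H_2(L)$ by a $\g$-isotypic reduction on $\Lambda^2L$, which is visibly a direct sum of finite-dimensional $\g$-modules. That route builds the extension directly. Yours has two extra obligations.
\begin{enumerate}
\item You must justify semisimplicity on the product-type cochain spaces.
\item You must pass from ``every cocycle is cohomologous to $\lambda\circ\psi$'' to the universal property. This needs a linear section of an arbitrary central extension, and perfectness of $\hg$ for uniqueness of the induced morphism. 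Perfectness holds because $[X\ox f,X\ox g]=(X,X)\,\ol{f\,dg}$.
\end{enumerate}
Uniqueness of $\lambda$, which you assert but do not check, follows the same way: evaluate $\lambda\circ\psi=\delta\mu$ at $(X\ox f,X\ox g)$ with $(X,X)\neq0$ to get $\lambda=0$. Your cocycle verification and steps 2--3 are fine.

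\textbf{Repair 1: local finiteness in step 1.} The justification you give does not work. The cochain space $\mathrm{Hom}(\Lambda^2(\g\ox A),V)$ is an inverse limit of the spaces attached to finite-dimensional $W$, not a union of them. Moreover, the coboundary does not act on $\mathrm{Hom}(\Lambda^2(\g\ox W),V)$ at all, since $[\g\ox W,\g\ox W]\not\subseteq\g\ox W$.

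The correct reason is that, as a $\g$-module, $\g\ox A$ is $\g$ tensored with the trivial module $A$. Hence
\[
  \mathrm{Hom}\bigl(\Lambda^2(\g\ox A),V\bigr)\;\cong\;(\Lambda^2\g)^*\ox\mathrm{Hom}(S^2A,V)\;\oplus\;(S^2\g)^*\ox\mathrm{Hom}(\Lambda^2A,V),
\]
which is a direct sum of copies of finitely many finite-dimensional modules. With this in hand the argument goes through:
\begin{enumerate}
\item Isotypic projections commute with $\delta$, because $\delta$ is $\g$-equivariant.
\item The map $Z^2\to H^2$ is equivariant, and $\g$ acts trivially on $H^2$.
\item Therefore the non-trivial isotypic part of any cocycle is a coboundary, and the trivial part is the $\g$-invariant representative you want.
\end{enumerate}

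\textbf{Repair 2: the presentation of $\Omega^1_A/dA$ in step 4.} The presentation you state is wrong. Modulo $1\ox fg-f\ox g-g\ox f$ and $1\ox f$, the quotient of $A\ox A$ is $\Lambda^2(A/\C)$. For $A=\C[x]$ this is non-zero, while $\Omega^1_A/dA=0$. The correct relations are $f\ox gh-fg\ox h-fh\ox g$, together with $1\ox f$.

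Your conclusion survives. The identity you actually derive, $\varphi(fg,h)=\varphi(f,gh)+\varphi(g,fh)$, rewrites by skew-symmetry as $\varphi(f,gh)=\varphi(fg,h)+\varphi(fh,g)$. That is exactly the first family of relations, so once the presentation is corrected the factorisation through $\Omega^1_A/dA$ follows.
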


\begin{theorem}\label{thm:SNF_basis}
{\rm\cite[Theorem~2.4]{Santos2022}}
Let $n=\deg P$ and suppose $P$ has no repeated roots and $P(0)\neq0$. Then
\[
  \dim_\C\Omega^1_A/dA = 1+n(m-1),
\]
and a basis is given by $x^{-1}dx$ together with the classes of
$x^{-j}u^l\,dx$ for $1\leq l\leq m-1$ and $1\leq j\leq n$.
\end{theorem}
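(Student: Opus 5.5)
The statement just recorded is Theorem~\ref{thm:SNF_basis}, cited from \cite[Theorem~2.4]{Santos2022}. I would prove it by a direct reduction in $\Omega^1_A$, mirroring the derivation-side argument of Corollary~\ref{cor:chain_general}.

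\emph{Step 1: rewrite $\Omega^1_A$.} Since $A$ is smooth (no repeated roots, $P(0)\neq0$), $\Omega^1_A$ is generated by $dx,du$ subject to $m u^{m-1}du=P'(x)dx$. On the smooth curve the $1$-form $\omega=dx/u^{m-1}=m\,du/P'$ is regular and nowhere vanishing. Hence $\Omega^1_A=A\,\omega$, and as a vector space it is spanned by $x^ku^{j}\omega$ with $k\in\Z$, $0\le j\le m-1$. Equivalently, using $u^{m-1}\cdot u=P$, one can work with the forms $x^ku^l\,dx$ after inverting appropriately. I would use the spanning set $x^k u^{l}\,dx$ and $x^k u^{-?}$ only as needed, staying in the $\Z/m\Z$-grading, which $d$ preserves.

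\emph{Step 2: exact relations.} Compute
\[
d(x^ku^l)=\bigl(kx^{k-1}P+\tfrac{l}{m}x^kP'\bigr)u^{l-m}\,dx ,
\]
which gives, sector by sector, a linear recursion among the classes $[x^ku^{l'}dx]$. The top coefficient has leading term $(k+\tfrac{l n}{m})x^{k+n-1}$, and the bottom term is $kP(0)x^{k-1}$. Since $ln/m\notin\Z$ is not guaranteed, I would argue separately for $1\le l\le m-1$ and $l=0$.

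\emph{Case $l=0$.} The relation $d(x^k)=kx^{k-1}dx$ kills every $x^{k-1}dx$ except $k=0$, leaving $x^{-1}dx$.

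\emph{Case $1\le l\le m-1$.} For each such sector, the recursion, with nonzero leading coefficient $k+ln/m$ (nonzero since $0<ln/m$ is not a negative integer) and nonzero trailing coefficient $kP(0)$ for $k\ne0$, lets me push high powers down and low powers up. This confines the classes to a window of $n$ consecutive exponents, namely $x^{-j}u^l dx$ for $1\le j\le n$. That window is chosen so that it sits between the ends where the two reductions stop, and the $k=0$ relation is the one that fixes its endpoint. This yields spanning, hence $\dim\le 1+n(m-1)$.

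\emph{Independence.} This is the main obstacle. I would construct enough linear functionals vanishing on $dA$, namely residues. The residues of $\omega\cdot x^{-j}$-type forms at the $m$ points over $x=0$ and at the points over $\infty$ give some. Integrals of the forms over a homology basis (periods) give the rest, which amounts to the algebraic de Rham count $\dim H^1_{dR}$ of the punctured curve, $=2g+s-1$. Plugging in the genus formula gives
\[
m(n-1)-n-\gcd+2+m+\gcd-1=1+n(m-1),
\]
matching the upper bound. So spanning plus $\dim H^1_{dR}(X\setminus\{s\text{ points}\})=1+n(m-1)$ proves the basis claim, since $\Omega^1_A/dA=H^1_{dR}$ for a smooth affine curve.
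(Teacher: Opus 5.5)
The paper does not prove this statement; it imports it from \cite[Theorem~2.4]{Santos2022}, so your argument has to stand on its own. The dimension half does. Since $\Omega^2_A=0$, the quotient $\Omega^1_A/dA$ is the algebraic de Rham $H^1$ of the smooth affine curve, comparison with singular cohomology gives $b_1=2g+s-1$, and your arithmetic reaching $1+n(m-1)$ is right.

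The gap is in the spanning step, and the basis claim rests on it. Your relation $d(x^ku^l)=(kx^{k-1}P+\tfrac lm x^kP')u^{l-m}dx$ relates the forms $x^ju^{l-m}dx=x^ju^{l-1}\omega$. It does not relate the forms $x^ju^l\,dx=x^jP\,u^{l-1}\omega$ of the statement. You nevertheless read the window ``$x^{-j}u^l\,dx$, $1\leq j\leq n$'' off it as if the two families were the same.

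In the presentation your relation actually governs, that window is wrong. Take $m=n=2$, $P=x^2-2ax+1$, $l=1$. Your relation at $k=-1$ is $d(x^{-1}u)=(a\,x^{-1}-x^{-2})\,\omega$, so $x^{-2}u^{-1}dx\equiv a\,x^{-1}u^{-1}dx$. The same example breaks your parenthetical: $k+ln/m$ vanishes at $k=-ln/m$ whenever $m\mid ln$.

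Carried out correctly, your top steps ($k\geq0$) and bottom steps ($k\leq-1$) land in $x^ju^{l-1}\omega$, $-1\leq j\leq n-2$. That is a correct spanning set, hence a basis with the de Rham count, but it is not the stated one. Step~1 also never shows that the classes of $x^ju^l\,dx$ span $\Omega^1_A/dA$ at all. Indeed $A\,dx=A\,u^{m-1}\omega$ is a proper submodule of $\Omega^1_A=A\,\omega$, because $u$ is not a unit, so nothing can be ``inverted appropriately''.

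Two steps close the gap.

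\emph{Relations among the stated forms.} Apply $d$ to $x^Nu^{l+m}=x^Nu^lP$ instead. This is \eqref{eq:dxu_general} with $L=l+m$, as in Proposition~\ref{prop:kahler_rec}. For $P=\sum_ib_ix^i$,
\[
  m\,d(x^Nu^{l+m})=\sum_{i=0}^{n}b_i\bigl(mN+(l+m)i\bigr)x^{N+i-1}u^l\,dx .
\]
Its top coefficient is non-zero for $N\geq1-n$, since then $mN+(l+m)n\geq m+ln>0$. Its bottom coefficient $b_0mN$ is non-zero for $N\neq0$. So every $x^ju^l\,dx$ reduces into $-n\leq j\leq-1$ with no degenerate step.

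\emph{These forms span the sector.} Work modulo $dA$ and the span of the $x^ju^l\,dx=x^jPu^{l-1}\omega$. Your own relation $m\,d(x^Nu^l)=(mNP+lxP')x^{N-1}u^{l-1}\omega$ then gives $x^NP'u^{l-1}\omega\equiv0$ for all $N$. Hence the sector-$l$ part of $\Omega^1_A/dA$ modulo that span is a quotient of $\C[x^{\pm1}]/(P,P')$, which is zero because $P$ has simple roots. In the example above the stated classes are $x^{-1}\omega-a\,\omega$ and $\omega-a\,x^{-1}\omega$ modulo $dA$, independent since $a^2\neq1$.

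With both steps, spanning by $1+n(m-1)$ classes plus your de Rham count gives the basis. Inside the paper the count is also available from Proposition~\ref{prop:centres_iso} together with Theorem~\ref{thm:UCE_Der}.
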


The two centres therefore have the same dimension, $1+n(m-1)$, for every $m$
and every admissible $P$. In the quadratic case this is $3$ and in the quartic
case $5$.

\subsection{Legendre polynomials}
\label{subsec:legendre}

We collect here the facts about Legendre polynomials used later, so that no
statement below depends on an external source. Throughout, $p_n$ denotes the
$n$-th Legendre polynomial, in lower case because $P$ is the polynomial defining
the curve, as in \cite{CoxZhao2018}; the variable is written $a$, since in this
paper it is the branch-point parameter of the quadratic normalisation of
\eqref{eq:palindromic_P}.

The family is determined by the three-term recurrence
\begin{equation}\label{eq:legendre_rec}
  (n+1)p_{n+1}(a) = (2n+1)a\,p_n(a) - n\,p_{n-1}(a),
  \qquad p_0(a)=1,\quad p_1(a)=a .
\end{equation}
A sequence satisfying \eqref{eq:legendre_rec} with these two initial values is
unique, since \eqref{eq:legendre_rec} determines $p_{n+1}$ from its two
predecessors and the leading coefficient $n+1$ never vanishes for $n\geq0$.
This uniqueness is the only property of the recurrence we use to make an
identification, and we use it repeatedly. The first few members are
\[
  p_2(a)=\tfrac{3a^2-1}{2},\qquad
  p_3(a)=\tfrac{5a^3-3a}{2},\qquad
  p_4(a)=\tfrac{35a^4-30a^2+3}{8}.
\]

Three further classical facts are used. The generating function is
\begin{equation}\label{eq:legendre_genfun}
  \sum_{n\geq0}p_n(a)z^n = (1-2az+z^2)^{-1/2},
\end{equation}
which is the ultraspherical generating function $(1-2az+z^2)^{-\nu}$ at
$\nu=\tfrac12$ \cite[(4.7.23)]{Szego1939}. Differentiating
\eqref{eq:legendre_genfun} in $z$ and clearing denominators shows that
$F(z)=(1-2az+z^2)^{-1/2}$ is the unique solution of
\begin{equation}\label{eq:legendre_ode}
  (1-2az+z^2)F'(z) + (z-a)F(z) = 0, \qquad F(0)=1 .
\end{equation}
The derivative relation
\begin{equation}\label{eq:legendre_deriv}
  p_{n+1}'(a) - p_{n-1}'(a) = (2n+1)p_n(a)
\end{equation}
follows by differentiating \eqref{eq:legendre_rec} and using it again to
eliminate the terms in $p_n'$. Finally $p_n(1)=1$ for every $n$, which is
immediate from \eqref{eq:legendre_rec} by induction. The evaluation
\eqref{eq:legendre_deriv} together with $p_n(1)=1$ is what turns the cocycle
computation of the cocycle into a definite integral.

\section{The double cover, after Cox and Zhao}
\label{sec:quad_der}

This section collects the facts about the double cover $m=2$,
$P(x)=x^2-2ax+1$ that the rest of the paper uses. All of them are due to Cox
and Zhao \cite[\S2.1--\S2.2]{CoxZhao2018}.

\subsection{The action of the distinguished derivation}

Every computation below rests on two applications of $\pa=u\,d/dx$, which we
carry out once here. From $u^2=P(x)=x^2-2ax+1$ we get $2uu'=P'(x)$, so
$uu'=x-a$, and therefore
\begin{equation}\label{eq:partial_xru}
  \pa(x^k u) \;=\; u\bigl(kx^{k-1}u + x^ku'\bigr)
  \;=\; kx^{k-1}u^2 + x^k(uu')
  \;=\; (k+1)x^{k+1} - a(2k+1)x^k + kx^{k-1}
\end{equation}
for every $k\in\Z$, which is the display preceding \cite[(2.22)]{CoxZhao2018}
and is the case $m=2$, $r=1$ of Lemma~\ref{lem:partial_general}. Applying $\pa$
once more to \eqref{eq:partial_xru}, using $\pa(x^j)=jx^{j-1}u$ term by term,
gives
\begin{equation}\label{eq:partial2_xsu}
  \pa^2(x^s u) \;=\; (s+1)^2x^su - as(2s+1)x^{s-1}u + s(s-1)x^{s-2}u .
\end{equation}
Formula \eqref{eq:partial_xru} is what produces the reduction relations in
$A/\pa A$, and \eqref{eq:partial2_xsu} is what the cocycle
\eqref{eq:psi_def} evaluates. We use both without further comment.

\subsection{The centre and the Legendre identification}

Before computing, we fix what the symbols mean. For $h\in A$ we write $[h]$ for
its class in the quotient $A/\pa A$, so that $[h]=[h']$ exactly when
$h-h'=\pa(g)$ for some $g\in A$, and $[\pa(g)]=0$ for every $g$. The centre of
$\hDer$ is this quotient, and the elements we must understand are the classes
$[x^k]$ of the Laurent monomials, for every $k\in\Z$, together with the classes
$[x^ku]$ of the odd part. The point of the next proposition is that these
infinitely many classes span a three-dimensional space, so that each $[x^k]$ is
a scalar multiple of one of three fixed generators, and the scalar is what we
identify with a Legendre value.

\begin{proposition}[{\cite[\S2.2]{CoxZhao2018}}]\label{prop:basis}
For $P(x) = x^2-2ax+1$ with $a\neq\pm 1$:
\[
  \dim_\C A/\pa A = 3,\qquad
  \text{basis: }\omega_0 = [x^{-1}u],\;\omega_1 = [x^{-1}],\;
  \omega_2 = [1].
\]
\end{proposition}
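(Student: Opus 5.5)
The plan has two parts: show that the three classes span $A/\pa A$, and get linear independence for free from the dimension count. By Theorem~\ref{thm:UCE_Der} (or Theorem~\ref{thm:SNF_basis} together with the equal-dimension remark), $\dim_\C A/\pa A=1+n(m-1)=1+2\cdot1=3$ for $m=2$, $n=2$. So it suffices to prove that $\omega_0,\omega_1,\omega_2$ span the quotient; independence then follows because three spanning vectors in a three-dimensional space form a basis.

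For spanning, we use that $A=\C[x^{\pm1}]\oplus\C[x^{\pm1}]u$ as a vector space, so it is enough to reduce every $[x^k]$ and every $[x^ku]$.

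\emph{Odd part.} Take Corollary~\ref{cor:chain_general} with $m=2$, $r=1$; equivalently, use Lemma~\ref{lem:partial_general} directly: $\pa(x^{k+1})=(k+1)x^ku$. This gives $[x^ku]=0$ for all $k\neq-1$, so the odd part of the quotient is spanned by $\omega_0=[x^{-1}u]$.

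\emph{Even part.} Here we use the relation \eqref{eq:partial_xru}:
\[
(k+1)[x^{k+1}]-a(2k+1)[x^k]+k[x^{k-1}]=0,\qquad k\in\Z.
\]
\begin{itemize}
\item Upward induction: for $k\ge0$ the leading coefficient $k+1\neq0$, so $[x^{k+1}]$ is expressed through $[x^k]$ and $[x^{k-1}]$. Already at $k=0$ this gives $[x^1]=a[1]$, so every $[x^j]$ with $j\ge0$ lies in $\Span\{\omega_1,\omega_2\}$. In fact $[x^j]=p_j(a)[1]$ by uniqueness for \eqref{eq:legendre_rec}, although this is not needed for the proposition.
\item Downward induction: for $k\le-1$ the coefficient $k\neq0$ of $[x^{k-1}]$ lets us solve for $[x^{k-1}]$ in terms of $[x^k]$ and $[x^{k+1}]$. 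Starting from $[x^{-1}]$ and $[x^0]$, every $[x^j]$ with $j\le-2$ lies in $\Span\{\omega_1,\omega_2\}$.
\end{itemize}

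The one point needing care is the index $k=-1$. There the relation reads $0\cdot[x^0]+a[x^{-1}]-[x^{-2}]=0$, so it simply gives $[x^{-2}]=a[x^{-1}]$ and is consistent with the downward step. Similarly $k=0$ gives $[x]=a[1]$. Thus neither endpoint relation imposes a constraint linking $\omega_1$ and $\omega_2$, and the two seeds $[x^{-1}]$ and $[1]$ remain free. This is consistent with the dimension count; the count is exactly what rules out a hidden dependence, so we do not need to construct a separating functional.

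If one preferred not to rely on Theorem~\ref{thm:UCE_Der} for the dimension, the main obstacle would be independence. In that case one would need three linear functionals on $A$ that vanish on $\pa A$ and separate $\omega_0,\omega_1,\omega_2$. The natural choice is residues of $h\,dx/u$ at the marked points: $h\pa$-exactness corresponds to exactness of $h\,dx/u$. I would cite the dimension instead.
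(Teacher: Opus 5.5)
Your proof is correct and takes essentially the paper's route: the dimension $3$ comes from Theorem~\ref{thm:UCE_Der}, and spanning comes from $\pa(x^{k+1})=(k+1)x^ku$ together with the relation \eqref{eq:partial_xru} run upwards and downwards, using the decoupling at $k=0$ and $k=-1$. The paper only cites \cite[\S2.2]{CoxZhao2018} for the spanning step, whereas you write it out, just as the paper itself does for the quartic analogue in Proposition~\ref{prop:quartic_basis}.
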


The dimension is Theorem~\ref{thm:UCE_Der} at $n=2$, $m=2$; the basis is
exhibited in \cite[\S2.2]{CoxZhao2018}. What turns that basis into an
identification is \eqref{eq:partial_xru}: since $\pa(x^ku)\in\pa A$, its class
vanishes, so
\begin{equation}\label{eq:rec_quad}
  (k+1)[x^{k+1}] = a(2k+1)[x^k] - k[x^{k-1}]
\end{equation}
holds in $A/\pa A$ for every $k\in\Z$. Relation \eqref{eq:rec_quad} is the Legendre recurrence
\eqref{eq:legendre_rec} with the Legendre initial values, run upwards on the
non-negative powers and downwards on the negative ones, so the uniqueness
recorded in \S\ref{subsec:legendre} identifies the classes outright.

\begin{theorem}[{\cite[\S2.2]{CoxZhao2018}}]
\label{thm:legendre_ident}
For $P(x) = x^2-2ax+1$, the following identities hold in $A/\pa A$\textup{:}
\begin{equation}\label{eq:legendre_red}
  [x^k] = p_k(a)\,\omega_2 \quad (k\geq 0),\qquad
  [x^{-k}] = p_{k-1}(a)\,\omega_1 \quad (k\geq 1),
\end{equation}
where $p_k(a)$ denotes the $k$-th Legendre polynomial evaluated at $a$.
\end{theorem}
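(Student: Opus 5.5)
The plan is to derive everything from the single relation \eqref{eq:rec_quad}, which holds for every $k\in\Z$ because $\pa(x^ku)\in\pa A$ by \eqref{eq:partial_xru}. The identification then follows from the uniqueness of solutions of the Legendre recurrence recorded in \S\ref{subsec:legendre}, applied once upwards and once downwards.

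For the non-negative powers, set $k=0$ in \eqref{eq:rec_quad}. This gives $[x^1]=a[x^0]$, so $[x^0]=p_0(a)\omega_2$ and $[x^1]=p_1(a)\omega_2$. For $k\ge1$, relation \eqref{eq:rec_quad} reads $(k+1)[x^{k+1}]=(2k+1)a[x^k]-k[x^{k-1}]$. This is \eqref{eq:legendre_rec} with coefficients in the vector space $A/\pa A$ in place of $\C$. The sequence $p_k(a)\omega_2$ satisfies the same recurrence with the same two initial terms. Induction on $k$ then gives $[x^k]=p_k(a)\omega_2$, because the coefficient $k+1$ never vanishes.

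For the negative powers, substitute $k=-j$ with $j\ge1$ in \eqref{eq:rec_quad}:
\[
(1-j)[x^{1-j}] = a(1-2j)[x^{-j}] + j[x^{-j-1}],
\]
that is,
\[
j[x^{-j-1}] = (2j-1)a[x^{-j}] - (j-1)[x^{-j+1}].
\]
Put $y_i=[x^{-i-1}]$ for $i\ge0$. With $i=j-1$ the relation becomes
\[
(i+1)y_{i+1}=(2i+1)a\,y_i-i\,y_{i-1},
\]
which is again the Legendre recurrence. The initial values are $y_0=[x^{-1}]=\omega_1=p_0(a)\omega_1$. For $y_1$, take $j=1$: the term $(j-1)[x^0]$ drops out because its coefficient is zero, leaving $y_1=a\,y_0=p_1(a)\omega_1$. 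Uniqueness then yields $y_i=p_i(a)\omega_1$, which is the statement $[x^{-k}]=p_{k-1}(a)\omega_1$ for $k\ge1$.

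The main obstacle is this decoupling at the boundary. The recurrence at $k=0$ and at $k=-1$ must not link the two chains, and this is exactly what the vanishing coefficients $(k+1)$ at $k=-1$ and $k$ at $k=0$ ensure. Once those two cases are checked, both inductions are routine. Proposition~\ref{prop:basis} is only needed to name $\omega_1$ and $\omega_2$ as basis elements; the identities themselves hold regardless of independence.
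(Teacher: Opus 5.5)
Your proof is correct and is the paper's argument: \eqref{eq:rec_quad} is read as the Legendre recurrence upwards from $[x^0]=\omega_2$, $[x]=a\,\omega_2$ and, after reindexing, downwards from $[x^{-1}]=\omega_1$, $[x^{-2}]=a\,\omega_1$, with the two rays decoupled by the vanishing coefficients at $k=0$ and $k=-1$; uniqueness of solutions then identifies both rays. Your reindexed relation $(i+1)y_{i+1}=(2i+1)a\,y_i-i\,y_{i-1}$ is the correct form of the paper's displayed downward relation, in which the indices of $b_j$ and $b_{j+2}$ appear to be swapped.
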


The statement is \cite[(2.22)]{CoxZhao2018}, given there in the form
$t^k=p_k(b)\cdot1$ for $k\geq0$ and $t^{-k}=p_{k-1}(b)\cdot t^{-1}$ for
$k\geq1$; the corresponding identification at every covering degree is
Corollary~\ref{cor:chains}(3). Three points in it are worth making explicit,
since the two rays are not symmetric. The rays decouple at $k=0$, where
\eqref{eq:rec_quad} reads $[x]=a[x^0]$ with the coefficient $k$ of $[x^{-1}]$
vanishing. The downward ray carries its own initial values, $[x^{-1}]=\omega_1$
from Proposition~\ref{prop:basis} and $[x^{-2}]=a\,\omega_1$ from
\eqref{eq:rec_quad} at $k=-1$. And it needs a reindexing: writing
$[x^{-k}]=b_k\,\omega_1$ and putting $i=-(k+1)$ in \eqref{eq:rec_quad} gives
$(j+1)b_j=a(2j+1)b_{j+1}-jb_{j+2}$ with $j=k-1$, so that $b_k=p_{k-1}(a)$ and
not $p_k(a)$. That shift by one is what the second formula of
\eqref{eq:legendre_red} records. Finally, summing \eqref{eq:legendre_red}
against $z^k$ transports the classical generating function
\eqref{eq:legendre_genfun} to the classes themselves,
$\sum_{k\geq0}[x^k]z^k=(1-2az+z^2)^{-1/2}\,\omega_2$, so that the scalar part
satisfies \eqref{eq:legendre_ode}; this is the form Section~\ref{sec:quartic}
uses.

\subsection{The two cocycle formulas}
\label{subsec:cox_cocycles}

The cocycle \eqref{eq:psi_def} of $\hDer$ splits by the $\Z/2\Z$-grading into
a mixed sector $\psi(e_k,f_s)$ and an even sector $\psi(e_k,e_s)$, where
$e_k=x^k\pa$ and $f_s=x^su\pa$ as in Notation~\ref{not:basis}. Cox and Zhao
compute both, and we record them in the form used below. Throughout, $k$ and
$s$ are generator indices and are not to be confused with the branch parameter
$r$ of $P$, which is $1$ in the quadratic case.

\begin{theorem}[{\cite[\S2.2]{CoxZhao2018}}]
\label{thm:cross_general}
Let $m=2$ and $P(x)=x^2-2ax+1$ with $a\in\C$, $a\neq\pm1$, let
$A=\C[x^{\pm1},u]/(u^2-P)$ and $\pa=u\,d/dx$, and let $\psi$ be the cocycle
\eqref{eq:psi_def} of $\hDer$. Write $e_k=x^k\pa$ and $f_s=x^su\pa$ as in
Notation~\ref{not:basis}, and let $\omega_1=[x^{-1}]$, $\omega_2=[1]$ be the
even-sector generators of $A/\pa A$ from Proposition~\ref{prop:basis}. Then for
all $k,s\in\Z$ with $k\neq0$, writing $n=k+s$,
\begin{equation}\label{eq:cross_general}
  \psi(e_k,f_s) = k\!\sum_{j=-3}^{1} C_j(s,a)\,[x^{n+j}]_{A/\pa A},
\end{equation}
where
\begin{align*}
  C_1(s,a) &= (s+1)^2, &
  C_0(s,a) &= -a(4s^2+5s+2),\\
  C_{-1}(s,a) &= (2s^2+s+1)+2a^2s(2s+1), &
  C_{-2}(s,a) &= -as(4s-1),\\
  C_{-3}(s,a) &= s(s-1).
\end{align*}
Applying the Legendre reduction \eqref{eq:legendre_red} expresses
this as a linear combination of Legendre polynomials in
$\C\omega_1\oplus\C\omega_2$.
\end{theorem}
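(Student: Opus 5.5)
The plan is a direct evaluation of the defining formula \eqref{eq:psi_def}, with no reduction in $A/\pa A$ needed before the final step. I take $f=x^k$ and $g=x^su$, so that $e_k=f\pa$ and $f_s=g\pa$. Then
\[
\psi(e_k,f_s)=\ol{\pa(x^k)\,\pa^2(x^su)} .
\]
Both factors are already known. By Lemma~\ref{lem:partial_general} at $m=2$ (or \eqref{eq:partial_xru}), $\pa(x^k)=k\,x^{k-1}u$. By \eqref{eq:partial2_xsu},
\[
\pa^2(x^su)=u\bigl((s+1)^2x^s-as(2s+1)x^{s-1}+s(s-1)x^{s-2}\bigr).
\]
I would first recheck \eqref{eq:partial2_xsu} quickly. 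Apply $\pa$ termwise to \eqref{eq:partial_xru}, using $\pa(x^j)=jx^{j-1}u$. The coefficient of $x^su$ is $(s+1)^2$. The coefficient of $x^{s-1}u$ is $-a(2s+1)s$. The coefficient of $x^{s-2}u$ is $s(s-1)$.

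Multiplying the two factors produces $u^2$, which I replace by $P(x)=x^2-2ax+1$. This turns the product into a Laurent polynomial in the even sector:
\[
k\,x^{k-1}(x^2-2ax+1)\bigl((s+1)^2x^s-as(2s+1)x^{s-1}+s(s-1)x^{s-2}\bigr).
\]
With $n=k+s$, the exponents that occur are $n+1,n,\dots,n-3$. Collecting coefficients gives the constants $C_j$ one by one:
\begin{itemize}
\item $x^{n+1}$: $(s+1)^2$.
\item $x^{n}$: $-2a(s+1)^2-as(2s+1)=-a(4s^2+5s+2)$.
\item $x^{n-1}$: $(s+1)^2+s(s-1)+2a^2s(2s+1)=(2s^2+s+1)+2a^2s(2s+1)$.
\item $x^{n-2}$: $-as(2s+1)-2as(s-1)=-as(4s-1)$.
\item $x^{n-3}$: $s(s-1)$.
\end{itemize}
Taking classes in $A/\pa A$ is linear, so this gives \eqref{eq:cross_general}. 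The hypothesis $k\neq0$ only excludes the trivial case, where both sides vanish.

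For the final sentence of the statement, each $[x^{n+j}]$ lies in the even sector. Theorem~\ref{thm:legendre_ident} expresses it as $p_{n+j}(a)\,\omega_2$ when $n+j\ge0$, and as $p_{-(n+j)-1}(a)\,\omega_1$ when $n+j\le -1$. The sum therefore lies in $\C\omega_1\oplus\C\omega_2$.

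There is no real obstacle here; the only care needed is bookkeeping. The main risk is in the $x^{n-1}$ coefficient, which receives three contributions, and in keeping the index shift consistent when the window $n-3,\dots,n+1$ straddles $0$, since the two Legendre rays use different normalisations there.
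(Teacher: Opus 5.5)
Your proof is correct and is the argument the paper has in mind: the paper recalls this statement from Cox and Zhao without writing out a proof, and it records \eqref{eq:partial_xru} and \eqref{eq:partial2_xsu} as exactly the inputs the cocycle \eqref{eq:psi_def} evaluates. So expanding $k\,x^{k-1}P(x)$ against the bracket of \eqref{eq:partial2_xsu} and then applying the two-ray reduction of Theorem~\ref{thm:legendre_ident} is the intended route, and your five coefficients, including the three contributions to $x^{n-1}$, check out.
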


The even sector behaves altogether differently, carrying no Legendre content
at all. For a general $P=\sum_ib_ix^i$ of degree $n$ and $m=2$ they prove
\begin{equation}\label{eq:ee_sector}
  \psi(e_k,e_s)\;=\;\tfrac12\,k\,s\,(s-k)\,b_{2-k-s}\;\omega_0
  \qquad\text{if } 0\leq 2-k-s\leq n,
\end{equation}
and $\psi(e_k,e_s)=0$ otherwise \cite[(2.1)]{CoxZhao2018}. The summation form of
that reference, $\psi(e_k,e_s)=ks\sum_i(s-1+\tfrac i2)b_i\delta_{k+s+i-2,0}$,
gives the coefficient $s-1+\tfrac{2-k-s}{2}=\tfrac{s-k}{2}$, which is the sign
printed above; the closed form displayed immediately after it in
\cite[(2.1)]{CoxZhao2018} carries $k-s$ instead, and disagrees with both its own
summation form and the quadratic specialisation recorded in
\cite[\S2.2]{CoxZhao2018}. For the quadratic
$P(x)=x^2-2ax+1$, where $b_0=b_2=1$ and $b_1=-2a$, this reads
\[
  \psi(e_k,e_s)=
  \Bigl(k^3\delta_{k+s,0}
        - a\,k(k-1)(2k-1)\,\delta_{k+s,1}
        + k(k-1)(k-2)\,\delta_{k+s,2}\Bigr)\omega_0 ,
\]
which is the form displayed in \cite[\S2.2]{CoxZhao2018}.

\section{Closed evaluation of the mixed cocycle}
\label{sec:cross}

The subject in this section is the five-term expression
\eqref{eq:cross_general}, which \cite{CoxZhao2018} writes down and leaves
unevaluated, and the point is that it does evaluate: at $k=1$ to a single
Legendre antiderivative, and at every remaining pair of generators to a
combination of two.

\subsection{The value at $k=1$}

\begin{theorem}\label{thm:antiderivative}
Let $m=2$ and $P(x)=x^2-2ax+1$ with $a\neq\pm1$, let $\pa=u\,d/dx$ and let
$\psi$ be the cocycle \eqref{eq:psi_def} of $\hDer$, with $e_k=x^k\pa$,
$f_s=x^su\pa$ and $\omega_2=[1]$ as above. Let $s\in\Z$ and put
$n=s+1\geq1$. Then
\begin{equation}\label{eq:antideriv}
  \psi(e_1,f_s) = \frac{p_{n-1}(a)-p_{n+1}(a)}{2n+1}\,\omega_2.
\end{equation}
Moreover:
\begin{equation}\label{eq:antideriv_integral}
  \frac{p_{n-1}(a)-p_{n+1}(a)}{2n+1} = \int_a^1 p_n(t)\,dt.
\end{equation}
Thus the cross-cocycle $\psi(e_1,f_s)$ equals the definite integral
of the $n$-th Legendre polynomial from $a$ to $1$, with
$n=s+1$. We write
\begin{equation}\label{eq:In}
  I_n(a)\;\coloneqq\;\int_a^1p_n(t)\,dt\;=\;\frac{p_{n-1}(a)-p_{n+1}(a)}{2n+1},
  \qquad n\geq1 ,
\end{equation}
for this scalar.
\end{theorem}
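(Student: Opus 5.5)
The plan is to specialise Theorem~\ref{thm:cross_general} to $k=1$ and reduce the resulting five-term combination with the Legendre identification of Theorem~\ref{thm:legendre_ident}. At $k=1$ one has $n=s+1$, and \eqref{eq:cross_general} reads
\[
  \psi(e_1,f_s)=\sum_{j=-3}^{1}C_j(s,a)\,[x^{s+1+j}],
\]
which involves the classes $[x^{s+2}],[x^{s+1}],[x^{s}],[x^{s-1}],[x^{s-2}]$. The hypothesis $n\geq1$ means $s\geq0$. I first check that no negative power carries a nonzero coefficient.
\begin{itemize}
\item For $s\geq2$ all five exponents are non-negative.
\item For $s=1$ the exponent $-1$ carries $C_{-3}=s(s-1)=0$.
\item For $s=0$ the exponents $-1,-2$ carry $C_{-2}=-as(4s-1)=0$ and $C_{-3}=0$.
\end{itemize}
So only the ray $[x^k]=p_k(a)\,\omega_2$ of \eqref{eq:legendre_red} is used, and $\omega_1$ never appears. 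This gives
\[
  \psi(e_1,f_s)=\Bigl(\sum_{j=-3}^{1}C_j(s,a)\,p_{s+1+j}(a)\Bigr)\omega_2 .
\]
(Alternatively one can recompute directly from \eqref{eq:psi_def}, $\psi(e_1,f_s)=[\pa(x)\,\pa^2(x^su)]=[u\,\pa^2(x^su)]$, using \eqref{eq:partial2_xsu} and $u^2=P$. This reproduces the same five terms, which serves as a check on the coefficients.)

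The core step is the scalar identity
\[
  \sum_{j=-3}^{1}C_j(s,a)\,p_{s+1+j}(a)=\frac{p_s(a)-p_{s+2}(a)}{2s+3}.
\]
I will prove it by expressing everything in the basis $p_s,p_{s+1}$ using the recurrence \eqref{eq:legendre_rec}:
\begin{itemize}
\item forwards for $p_{s+2}=\bigl((2s+3)a\,p_{s+1}-(s+1)p_s\bigr)/(s+2)$;
\item backwards for $p_{s-1}=\bigl((2s+1)a\,p_s-(s+1)p_{s+1}\bigr)/s$;
\item backwards again for $p_{s-2}$, which introduces a denominator $s(s-1)$.
\end{itemize}
The factors $s$ in $C_{-2}$ and $s(s-1)$ in $C_{-3}$ cancel these denominators. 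Then both sides become $\alpha(s,a)p_{s+1}+\beta(s,a)p_s$ with $\alpha,\beta$ rational in $s$ and polynomial in $a$. Comparing the two coefficients, after clearing the common denominator $(s+2)(2s+3)$, is a finite polynomial identity in $s$ and $a$, valid for $s\geq2$. The cases $s=0,1$ I verify directly from $p_0,\dots,p_3$ listed in \S\ref{subsec:legendre}.

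The main obstacle is this bookkeeping. The coefficients $C_{-1}$ and $C_0$ mix $a$ and $a^2$, so the comparison really uses the recurrence twice in each direction, and a sign or index slip there is easy. If the coefficient comparison turns out messy, my fallback is a generating-function check: multiply by $z^{s}$, sum over $s$, and use \eqref{eq:legendre_genfun} and \eqref{eq:legendre_ode} to compare both sides as differential expressions in $(1-2az+z^2)^{-1/2}$.

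Finally, \eqref{eq:antideriv_integral} follows by integrating the derivative relation \eqref{eq:legendre_deriv} from $a$ to $1$, which gives
\[
  (2n+1)\int_a^1p_n=\bigl[p_{n+1}-p_{n-1}\bigr]_a^1 .
\]
Since $p_{n+1}(1)=p_{n-1}(1)=1$, the right-hand side is $p_{n-1}(a)-p_{n+1}(a)$, and this is \eqref{eq:In}.
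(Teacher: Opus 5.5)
Your proposal is correct, and it proves the key scalar identity by a different route from the paper. The paper also specialises \eqref{eq:cross_general} to $k=1$, reduces with Theorem~\ref{thm:legendre_ident}, and proves the integral formula exactly as you do. For the identity $g_n=(p_{n-1}-p_{n+1})/(2n+1)$, however, it argues analytically. It checks $g_n(1)=0$ and then shows $g_n'=-p_n$, using Bonnet's formula, the derivative recurrence and $(a^2-1)p_n'=n a\,p_n-n\,p_{n-1}$, so that $g_n$ appears directly as the antiderivative $\int_a^1p_n$.

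Your reduction to the pair $p_s,p_{s+1}$ uses only the three-term recurrence, and it does go through cleanly. The factors $s$ and $s(s-1)$ in $C_{-2},C_{-3}$ cancel the denominators, as you expect. The $a^2$ part of $C_{-1}$ then cancels against the terms produced by eliminating $p_{s-1}$. Both sides collapse to $(p_s-a\,p_{s+1})/(s+2)$, so the generating-function fallback is not needed.

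Your route buys a short computation that can be checked in full. The paper, by contrast, leaves its final collection of the $p_n,p_{n-1},p_{n-2}$ coefficients as three polynomial identities it does not display. Your method is also the same kind of manipulation the paper itself uses for Theorem~\ref{thm:cross_closed}. The paper's route buys the antiderivative as the mechanism of the proof, rather than as a classical identity appended at the end. Your explicit check that no negative exponent carries a non-zero coefficient at $s=0,1$ is what the paper records separately, in Remark~\ref{rem:antideriv_boundary}.
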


\begin{proof}
We first establish \eqref{eq:antideriv_integral} as an independent
classical identity, then use it to complete the proof of \eqref{eq:antideriv}.

\medskip
\textbf{Equation \eqref{eq:antideriv_integral}.}
Set $F(a) \coloneqq (p_{n-1}(a)-p_{n+1}(a))/(2n+1)$.
The standard Legendre derivative identity
$p_{n+1}'(a)-p_{n-1}'(a) = (2n+1)p_n(a)$
(differentiate the three-term recurrence; see \cite{Szego1939})
gives
\[
  F'(a) = \frac{p_{n-1}'(a)-p_{n+1}'(a)}{2n+1} = -p_n(a).
\]
Evaluating at $a=1$ and using $p_k(1)=1$ for all $k$,
$F(1) = (1-1)/(2n+1) = 0$.
By the fundamental theorem of calculus,
\[
  F(a) = F(1) + \int_1^a F'(t)\,dt = \int_1^a(-p_n(t))\,dt = \int_a^1 p_n(t)\,dt.
\]
This proves \eqref{eq:antideriv_integral} without any reference to the
cross-cocycle.

\medskip
\textbf{Equation \eqref{eq:antideriv}.}
Substitute $k=1$, so that $s=n-1$, in \eqref{eq:cross_general}.
After applying the Legendre reduction \eqref{eq:legendre_red},
the cross-cocycle becomes the linear combination
\begin{equation}\label{eq:key_identity}
  g_n(a) \coloneqq \sum_{k=-3}^{1} C_k(n-1,a)\,p_{n+k}(a).
\end{equation}
We must show $g_n(a) = (p_{n-1}(a)-p_{n+1}(a))/(2n+1)$ for all $n\geq 1$.

The strategy is to show that $g_n$ and $F$ have the same derivative and the
same value at $a=1$. We record the small cases first, as a check on the
coefficients, then compute the value at $a=1$, then the derivative.

\textit{Base cases $n=1,2,3$.}
For $n=1$: $C_1=1$, $C_0=-2a$, $C_{-1}=1$, $C_{-2}=0$, $C_{-3}=0$.
$g_1(a) = p_2(a)-2ap_1(a)+p_0(a) = \tfrac{3a^2-1}{2}-2a^2+1 = \tfrac{1-a^2}{2}$.
The right-hand side of \eqref{eq:antideriv} is $(p_0(a)-p_2(a))/3 = \tfrac{1-a^2}{2}$, so the two agree.

For $n=2$: $C_1=4$, $C_0=-11a$, $C_{-1}=4+6a^2$, $C_{-2}=-3a$, $C_{-3}=0$.
$g_2(a) = 4p_3-11ap_2+(4+6a^2)p_1-3ap_0 = \tfrac{a(1-a^2)}{2}$.
The right-hand side is $(p_1(a)-p_3(a))/5 = \tfrac{a(1-a^2)}{2}$, so the two agree.

For $n=3$: $C_1=9$, $C_0=-28a$, $C_{-1}=11+20a^2$, $C_{-2}=-14a$, $C_{-3}=2$.
$g_3(a) = 9p_4-28ap_3+(11+20a^2)p_2-14ap_1+2p_0$;
expanding and collecting gives $-(5a^2-1)(a^2-1)/8$.
The right-hand side is $(p_2(a)-p_4(a))/7 = -(5a^2-1)(a^2-1)/8$, so the two agree.

\textit{The value at $a=1$.}
Both $g_n$ and $(p_{n-1}-p_{n+1})/(2n+1)$ vanish there; this is the input the
derivative argument needs, not a check.
Indeed, at $a=1$ all $p_k(1)=1$, so (with $s=n-1$):
\[
  g_n(1) = \sum_{k=-3}^{1} C_k(n-1,1)
  = n^2 - (4n^2-3n+1) + \bigl(2n^2-3n+2+2(n-1)(2n-1)\bigr)
    - (n-1)(4n-5) + (n-1)(n-2).
\]
The constant terms give $n^2-(4n^2-3n+1)+(6n^2-9n+4) = 3(n-1)^2$,
and the $(n-1)$-factored terms give
$(n-1)[-(4n-5)+(n-2)] = -3(n-1)^2$.
Hence $g_n(1) = 3(n-1)^2 - 3(n-1)^2 = 0$.
This matches $(p_{n-1}(1)-p_{n+1}(1))/(2n+1) = 0$ for all $n$.

\textit{General $n$, by a derivative argument.}
We show $g_n'(a) = -p_n(a)$.
Since $F'(a) = -p_n(a)$ was proved above, this gives $(g_n-F)' = 0$,
so $g_n - F$ is constant; the common boundary value $g_n(1) = F(1) = 0$
then forces $g_n = F$ as polynomials.

Differentiating $g_n(a) = \sum_{k=-3}^{1} C_k(n-1,a)\,p_{n+k}(a)$,
note that (with $s = n-1$) the coefficients
$C_0 = -a(4n^2-3n+1)$,
$C_{-1} = (2n^2-3n+2)+2a^2(n-1)(2n-1)$,
and $C_{-2} = -a(n-1)(4n-5)$
all depend on $a$, while $C_1 = n^2$ and $C_{-3} = (n-1)(n-2)$ do not.
The product rule gives
\begin{align*}
  g_n'(a)
  &= \frac{\partial C_0}{\partial a}\,p_n(a)
   + \frac{\partial C_{-1}}{\partial a}\,p_{n-1}(a)
   + \frac{\partial C_{-2}}{\partial a}\,p_{n-2}(a)
   + \sum_{k=-3}^{1} C_k(n-1,a)\,p'_{n+k}(a),
\end{align*}
where
\[
  \frac{\partial C_0}{\partial a} = -(4n^2{-}3n{+}1),\quad
  \frac{\partial C_{-1}}{\partial a} = 4a(n{-}1)(2n{-}1),\quad
  \frac{\partial C_{-2}}{\partial a} = -(n{-}1)(4n{-}5).
\]
We reduce each $p'_{n+k}$ to a combination of $p'_n$ and pure
Legendre values using Bonnet's formula $ap'_n - p'_{n-1} = np_n$
(see \cite{Szego1939}, \S4.5), combined with the derivative
recurrence $p'_{n+1} - p'_{n-1} = (2n+1)p_n$:
\begin{align}
  p'_{n+1}(a) &= (n+1)p_n(a) + ap'_n(a), \label{eq:dpn1}\\
  p'_{n-1}(a) &= ap'_n(a) - np_n(a),     \label{eq:dpnm1}\\
  p'_{n-2}(a) &= a^2p'_n(a) - nap_n(a) - (n{-}1)p_{n-1}(a), \label{eq:dpnm2}\\
  p'_{n-3}(a) &= a^3p'_n(a) - na^2p_n(a)
                - (n{-}1)ap_{n-1}(a) - (n{-}2)p_{n-2}(a). \label{eq:dpnm3}
\end{align}
Each line follows from the previous by applying Bonnet once more:
$p'_{n-k-1} = ap'_{n-k} - (n-k)p_{n-k}$.

Substituting \eqref{eq:dpn1}--\eqref{eq:dpnm3} into the
product-rule expansion and collecting the $p'_n$ coefficient:
\begin{align*}
  [\text{coeff of }p'_n]
  &= C_1 a + C_0 + C_{-1}a + C_{-2}a^2 + C_{-3}a^3 \\
  &= n^2 a - a(4n^2{-}3n{+}1) + a(2n^2{-}3n{+}2)
     + 2a^3(n{-}1)(2n{-}1) - a^3(n{-}1)(4n{-}5) + a^3(n{-}1)(n{-}2)\\
  &= a(1-n^2) + a^3(n^2-1)
   = (n^2-1)\,a(a^2-1).
\end{align*}
The $a$-terms give $n^2-(4n^2-3n+1)+(2n^2-3n+2) = 1-n^2$;
the $a^3$-terms give $(n-1)[2(2n-1)-(4n-5)+(n-2)] = n^2-1$.
We now absorb this $p'_n$ remainder using the Legendre ODE identity
$(a^2-1)p'_n(a) = nap_n(a) - np_{n-1}(a)$,
which gives
\[
  (n^2-1)\,a(a^2-1)\,p'_n(a)
  = n(n^2-1)\,a^2 p_n(a) - n(n^2-1)\,a\,p_{n-1}(a).
\]
After adding this contribution to the $p_n$, $p_{n-1}$,
$p_{n-2}$ coefficients already accumulated from the
$\partial C_k/\partial a$ and pure-$p_{n+k}$ terms,
collecting the coefficients of $p_n$, $p_{n-1}$ and $p_{n-2}$, which is three
polynomial identities in $n$ and $a$, each a two-line expansion, yields:
\[
  g_n'(a) = -p_n(a) + 0\cdot p_{n-1}(a) + 0\cdot p_{n-2}(a).
\]
The same collection at $n=2$ is short enough to display:
$g_2'(a) = -7\,p_2 + 9a\,p_1 - 3\,p_0
           = -\tfrac{7(3a^2-1)}{2}+9a^2-3
           = \tfrac{1-3a^2}{2} = -p_2(a)$, as required.
\end{proof}

\begin{corollary}\label{cor:cocycle_values}
In the setting of Theorem~\ref{thm:antiderivative}, read as an identity of
polynomials in $a$ so that the endpoints $a=\pm1$ are admitted, the scalar
$I_n(a)$ of \eqref{eq:In} takes the
following values. 
\begin{enumerate}
\item It vanishes at $a=1$, because $I_n(a)=\int_a^1p_n$ and the interval degenerates, and 
\item it vanishes at $a=-1$ for every $n\geq1$, because $\int_{-1}^1p_n(t)\,dt=0$ by orthogonality of $p_n$ to $p_0=1$.
\item At $a=0$ it equals $\int_0^1p_n(t)\,dt$, which vanishes for even $n$, since then $p_n$ is an even function and $\int_{-1}^1p_n=0$ forces $\int_0^1p_n=0$, and which equals
\[
  I_{2k+1}(0) \;=\; \frac{(-1)^k}{2(k+1)}\binom{2k}{k}4^{-k}
\]
for odd $n=2k+1$. The first values are $I_1(0)=\tfrac12$, $I_3(0)=-\tfrac18$ and $I_5(0)=\tfrac1{16}$. 
\end{enumerate}

\end{corollary}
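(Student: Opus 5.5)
The plan is to work from the closed form $I_n(a)=(p_{n-1}(a)-p_{n+1}(a))/(2n+1)$ of \eqref{eq:In}, read as a polynomial identity in $a$. Since both sides of \eqref{eq:antideriv_integral} are polynomials in $a$, the identity extends from $a\neq\pm1$ to all $a\in\C$. This lets us evaluate at the excluded endpoints without further justification.

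\textbf{Items (1) and (2).} Item (1) is immediate from $p_k(1)=1$ for all $k$, recorded in \S\ref{subsec:legendre}. The alternative reading, that $\int_1^1 p_n=0$, gives the same conclusion.

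For item (2), first establish the parity $p_k(-a)=(-1)^kp_k(a)$. This follows by induction from \eqref{eq:legendre_rec}: the initial values $p_0=1$ and $p_1=a$ have the right parity, and the recurrence preserves it. Hence $p_k(-1)=(-1)^k$. Since $n-1$ and $n+1$ have the same parity, $p_{n-1}(-1)-p_{n+1}(-1)=0$, so $I_n(-1)=0$. This is the orthogonality statement $\int_{-1}^1p_n\cdot p_0=0$, obtained here without appeal to an external source.

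\textbf{Item (3).} For even $n$, both $n\pm1$ are odd. By the parity just established, $p_{n\pm1}$ are odd functions, so $p_{n\pm1}(0)=0$ and $I_n(0)=0$.

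For odd $n=2k+1$, we need the central values $p_{2j}(0)$. Setting $a=0$ in \eqref{eq:legendre_rec} gives
\[
(2j)\,p_{2j}(0)=-(2j-1)\,p_{2j-2}(0).
\]
By induction, $p_{2j}(0)=(-1)^j\binom{2j}{j}4^{-j}$; equivalently, set $a=0$ in the generating function \eqref{eq:legendre_genfun} to get $(1+z^2)^{-1/2}$ and read off the binomial series. Then
\[
I_{2k+1}(0)=\frac{p_{2k}(0)-p_{2k+2}(0)}{4k+3}.
\]
Using $p_{2k+2}(0)=-\frac{2k+1}{2k+2}\,p_{2k}(0)$, the numerator becomes
\[
p_{2k}(0)\Bigl(1+\frac{2k+1}{2k+2}\Bigr)=p_{2k}(0)\,\frac{4k+3}{2k+2}.
\]
The factor $4k+3$ cancels, giving $I_{2k+1}(0)=\frac{(-1)^k}{2(k+1)}\binom{2k}{k}4^{-k}$.

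The listed values follow by substitution: $k=0,1,2$ give $\tfrac12$, $-\tfrac18$ and $\tfrac1{16}$ respectively, since $\binom{4}{2}/16/6=1/16$.

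\textbf{Main obstacle.} No step is hard. The only step needing care is the cancellation of $4k+3$ in item (3), which depends on the single ratio $p_{2k+2}(0)/p_{2k}(0)$. I would verify that ratio against $p_2(0)=-\tfrac12$ and $p_4(0)=\tfrac38$ before writing out the general formula.
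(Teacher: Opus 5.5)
Your proposal is correct, but it argues from the other side of \eqref{eq:In} than the paper does.

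The paper reads every value off the integral $\int_a^1p_n$:
\begin{enumerate}
\item item (1) holds because the interval degenerates;
\item item (2) holds because $\int_{-1}^1p_n=0$ by orthogonality of $p_n$ to $p_0$;
\item the even case of item (3) holds because $p_n$ is even, so $\int_0^1p_n=\tfrac12\int_{-1}^1p_n$;
\item for odd $n$ the paper states the closed form for $I_{2k+1}(0)$ without deriving it.
\end{enumerate}

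You work instead with the difference $(p_{n-1}-p_{n+1})/(2n+1)$ and use only the recurrence \eqref{eq:legendre_rec}. The parity $p_k(-a)=(-1)^kp_k(a)$ gives item (2) and the even half of item (3). The recurrence at $a=0$ gives the ratio $p_{2k+2}(0)/p_{2k}(0)=-(2k+1)/(2k+2)$, and with it the factor $4k+3$ cancels.

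Your route buys self-containedness in the sense the paper announces in \S\ref{subsec:legendre}. Orthogonality of the Legendre polynomials is not among the facts collected there, and you obtain $\int_{-1}^1p_n=0$ as a consequence rather than an input. You also supply the computation of $I_{2k+1}(0)$ that the paper leaves out.

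The paper's route buys a conceptual explanation: the zeros at $a=\pm1$, and at $a=0$ for even $n$, are read as statements about the Legendre weight on $[-1,1]$. The cost is importing orthogonality from outside the paper.
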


The identity $\int_a^1 p_n(t)dt = (p_{n-1}(a)-p_{n+1}(a))/(2n+1)$
is classical, and follows from the derivative identity
$p'_{n+1}(a)-p'_{n-1}(a) = (2n+1)p_n(a)$ together with $p_k(1)=1$, as
carried out in the proof of Theorem~\ref{thm:antiderivative}.

\begin{remark}\label{rem:antideriv_boundary}
All five exponents $n-3,\dots,n+1$ of \eqref{eq:cross_general} are
non-negative only for $n\geq3$. At $k=1$ the wider range $n\geq1$ is
nevertheless safe, because the coefficients sitting on the offending exponents
vanish there: $C_{-3}(s,a)=s(s-1)$ and $C_{-2}(s,a)=-as(4s-1)$ both vanish at
$s=0$, that is at $n=1$, and $C_{-3}$ vanishes at $s=1$, that is at $n=2$. So no
negative exponent carries a non-zero coefficient and the reduction
\eqref{eq:legendre_red} applies throughout in the form
$[x^\mu]=p_\mu(a)\,\omega_2$. At $n\leq0$ the surviving coefficients on negative
exponents are non-zero, the class $\omega_1$ enters, and the value no longer
lies on the line $\C\omega_2$: substituting directly into
\eqref{eq:cross_general} and reducing gives
$\psi(e_1,f_{-1})=\omega_1-a\,\omega_2$ at $n=0$ and
$\psi(e_1,f_{-2})=-a\,\omega_1+\omega_2$ at $n=-1$.
\end{remark}

\subsection{The value at every index}

Theorem~\ref{thm:antiderivative} evaluates one line of a two-parameter family.
The next theorem evaluates all of it, and this is what makes the antiderivative
a feature of the cocycle rather than of the index $k=1$: the mixed sector of
$\psi$ is, entirely and at every pair of generators, a combination of two
Legendre antiderivatives with rational coefficients. It also explains why
$k=1$ looked special, since the second antiderivative is exactly what drops
there. The statement is the full table of structure constants of $\hDer$ in
that sector. The scalar $I_n(a)$ is the one introduced in
\eqref{eq:In}.

\begin{theorem}\label{thm:cross_closed}
Let $m=2$ and $P(x)=x^2-2ax+1$ with $a\neq\pm1$, and let $\psi$, $e_k$, $f_s$,
$\omega_2$ be as in Theorem~\ref{thm:antiderivative}. Let $k,s\in\Z$ with
$k\neq0$ and put $n=k+s$. If $n\geq3$ then
\begin{equation}\label{eq:cross_closed}
  \psi(e_k,f_s)
  \;=\;
  \frac{k}{2n-1}\Bigl(\bigl((3k-1)s+2k-1\bigr)I_n(a)
      \;-\;3(k-1)s\,I_{n-2}(a)\Bigr)\,\omega_2 .
\end{equation}
In particular the value lies on the line $\C\omega_2$ for every index once
$n\geq3$, it vanishes at $a=1$, and the second term is absent exactly when
$k=1$ or $s=0$. For $k=1$ the formula reduces to
$\psi(e_1,f_s)=I_n(a)\,\omega_2$, which is the $n\geq3$ case of
Theorem~\ref{thm:antiderivative}.
\end{theorem}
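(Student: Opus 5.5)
The plan is to start from the five-term expression \eqref{eq:cross_general} of Theorem~\ref{thm:cross_general} and reduce it with Theorem~\ref{thm:legendre_ident}. Since $n\geq3$, all five exponents $n-3,\dots,n+1$ are non-negative, so every class reduces as $[x^\mu]=p_\mu(a)\,\omega_2$ and the value lies on $\C\omega_2$. This settles the first assertion. We write
\[
  \psi(e_k,f_s)=k\,G_{k,s}(a)\,\omega_2,\qquad
  G_{k,s}(a)=\sum_{j=-3}^{1}C_j(s,a)\,p_{n+j}(a),\quad n=k+s,
\]
which is a polynomial in $a$. The claim becomes the polynomial identity
\[
  (2n-1)\,G_{k,s}=\bigl((3k-1)s+2k-1\bigr)I_n-3(k-1)s\,I_{n-2}.
\]
The consequences are then immediate: $I_n(1)=I_{n-2}(1)=0$ gives vanishing at $a=1$, the factor $(k-1)s$ gives absence of the second term at $k=1$ or $s=0$, and at $k=1$ the first coefficient is $(2s+1)/(2n-1)=1$, which recovers Theorem~\ref{thm:antiderivative}.

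I will prove the polynomial identity the same way Theorem~\ref{thm:antiderivative} was proved, by matching the value at $a=1$ and the derivative. Both sides vanish at $a=1$. On the right this holds because $I_\nu(1)=0$. On the left, $p_\mu(1)=1$ gives $G_{k,s}(1)=\sum_jC_j(s,1)$. The $C_j$ depend only on $s$, and the computation already in the proof of Theorem~\ref{thm:antiderivative} (with $n$ replaced by $s+1$) shows this sum is $0$ for every $s$. It remains to check that the derivatives agree. Since $I_\nu'=-p_\nu$, the target is
\[
  (2n-1)\,G_{k,s}'(a)=-\bigl((3k-1)s+2k-1\bigr)p_n+3(k-1)s\,p_{n-2}.
\]
To compute $G'$, I will differentiate by the product rule, exactly as in that proof. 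I then rewrite every $p'_{n+j}$ in terms of $p'_n$ and the values $p_n,p_{n-1},p_{n-2}$, using \eqref{eq:dpn1}--\eqref{eq:dpnm3}. The coefficient of $p'_n$ is again $\sum_j C_j a^{j+1}$, which depends only on $s$ and equals $((s+1)^2-1)\,a(a^2-1)$. I remove it with $(a^2-1)p_n'=n(ap_n-p_{n-1})$. This leaves a combination of $p_n,p_{n-1},p_{n-2}$ with polynomial coefficients in $a$.

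The last step is to bring the right-hand side to the same basis. I do this with the recurrence \eqref{eq:legendre_rec} at index $n-1$, namely $(n-1)p_{n-2}=(2n-1)a\,p_{n-1}-n\,p_n$, and I also use it on the left to lower powers of $a$ multiplying $p_{n-1}$ and $p_{n-2}$. Because $p_n$ and $p_{n-1}$ are linearly independent over $\C[a]$ only modulo such relations, the cleanest route is to express everything in $p_n$ and $p_{n-1}$ alone and match coefficients. These coefficients are polynomials in $a$ of low degree, and their entries are polynomials in $k$ and $s$. The matching therefore amounts to finitely many polynomial identities in $(k,s,a)$, and I verify them by expansion.

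This coefficient matching is where I expect the main obstacle. The factor $2n-1$ must be produced correctly: it appears naturally from the recurrence at $n-1$, which is why $I_{n-2}$ enters, and the bookkeeping with general $k$ is heavier than in the case $k=1$. As a safeguard I will first check the identity at small cases, for instance $(k,s)=(2,1)$ and $(3,0)$, and at $a=0$ using Corollary~\ref{cor:cocycle_values}, before doing the general expansion.
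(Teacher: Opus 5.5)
Your argument is correct, but it is not the route the paper takes.

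The paper never differentiates. It rewrites $\sum_j C_j(s,a)p_{n+j}$ directly in the Legendre basis, applying $ap_\mu=\bigl((\mu+1)p_{\mu+1}+\mu p_{\mu-1}\bigr)/(2\mu+1)$ at most twice per term. It finds that only $p_{n+1},p_{n-1},p_{n-3}$ survive, with explicit coefficients $\alpha,\beta,\delta$ satisfying $\alpha+\beta+\delta=0$. It then regroups the result as $\alpha(p_{n+1}-p_{n-1})+\delta(p_{n-3}-p_{n-1})=-(2n+1)\alpha I_n+(2n-3)\delta I_{n-2}$.

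You instead carry the value-at-$a=1$ plus derivative argument of Theorem~\ref{thm:antiderivative} over to general $k$, and the computation does close. After Bonnet's formula and $(a^2-1)p_n'=n(ap_n-p_{n-1})$ one finds
\[
  G_{k,s}'=\bigl(-(n+3)s^2+(n-3)s-1\bigr)p_n+as\,(2ns+5s-2n+4)\,p_{n-1}-s\,(ns+2s-n+1)\,p_{n-2}.
\]
One use of the recurrence at index $n-1$ on the term $a\,p_{n-1}$ then gives $(2n-1)G_{k,s}'=\bigl(3s^2-3(n-1)s-(2n-1)\bigr)p_n+3(k-1)s\,p_{n-2}$. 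This is your target once $k=n-s$ is substituted. So the natural final basis is $\{p_n,p_{n-2}\}$, not $\{p_n,p_{n-1}\}$. Your worry about independence over $\C[a]$ is also moot: exact matching of coefficients is a sufficient condition, and here it holds.

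One slip: the coefficient of $p_n'$ is $C_1a+C_0+C_{-1}a+C_{-2}a^2+C_{-3}a^3$, not $\sum_jC_ja^{j+1}$. The value $s(s+2)a(a^2-1)$ you state for it is nonetheless correct.

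As for what each approach buys:
\begin{itemize}
\item The paper's route derives the closed form rather than verifying a given target. It also shows why exactly two antiderivatives occur: the support is forced onto $p_{n+1},p_{n-1},p_{n-3}$, and $\alpha+\beta+\delta=0$ is exactly the vanishing at $a=1$ that you also use.
\item Your route presupposes the answer, but it is uniform with the proof of Theorem~\ref{thm:antiderivative}. Both the boundary value and the $p_n'$ coefficient are independent of $k$, so all of the $k$-dependence sits in a final match of two coefficients.
\end{itemize}
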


\begin{proof}
The five exponents occurring in \eqref{eq:cross_general} are $n-3,\dots,n+1$,
and $n\geq3$ makes all of them non-negative, so the Legendre reduction
\eqref{eq:legendre_red} applies to each and gives
\[
  \psi(e_k,f_s)=k\Bigl(\sum_{j=-3}^{1}C_j(s,a)p_{n+j}(a)\Bigr)\omega_2 .
\]
The coefficients $C_j$ are polynomial of degree at most $2$ in $a$, so the sum
is brought back to the Legendre basis by applying
$ap_\mu=\bigl((\mu+1)p_{\mu+1}+\mu p_{\mu-1}\bigr)/(2\mu+1)$ at most twice to
each term. Collecting, the coefficients of $p_n$ and of $p_{n-2}$ cancel
identically and what survives is
\[
  \psi(e_k,f_s)=\bigl(\alpha\,p_{n+1}+\beta\,p_{n-1}+\delta\,p_{n-3}\bigr)\omega_2 ,
\]
with
\[
  \alpha=\frac{k\bigl(3k^2-3k(n+1)+n+1\bigr)}{4n^2-1},\qquad
  \delta=\frac{3k(k-1)(k-n)}{4n^2-8n+3},\qquad
  \beta=-\alpha-\delta .
\]
The identity $\alpha+\beta+\delta=0$ is what lets the three-term combination
collapse onto two antiderivatives; it is an identity of rational functions in
$k$ and $n$, obtained by direct expansion. Using it,
\[
  \psi(e_k,f_s)
   =\bigl(\alpha(p_{n+1}-p_{n-1})+\delta(p_{n-3}-p_{n-1})\bigr)\omega_2
   =\bigl(-(2n+1)\alpha\,I_n+(2n-3)\delta\,I_{n-2}\bigr)\omega_2 ,
\]
and substituting $s=n-k$ into the displayed $\alpha$ and $\delta$ turns
$-(2n+1)\alpha$ into $k\bigl((3k-1)s+2k-1\bigr)/(2n-1)$ and $(2n-3)\delta$ into
$-3k(k-1)s/(2n-1)$, which is \eqref{eq:cross_closed}. Setting $k=1$ kills
$\delta$ and leaves $-(2n+1)\alpha=1$.
\end{proof}

\begin{remark}\label{rem:no_christoffel_darboux}
Two features of the even-sector formula \eqref{eq:ee_sector} are worth stating
here, because the shape of the closed forms above invites a question that they
answer negatively. The
first is that the even sector is supported on the $n+1$ anti-diagonals
$2-n\leq k+s\leq2$, that its entries are polynomial of degree three in the
index, and that on $k+s=0$ it is the Witt cocycle $k^3$ up to the coefficient
$b_2$, so that the geometry of
the curve enters only through the coefficients of $P$ on the remaining
anti-diagonals. The second is that no orthogonal polynomial appears in it at
all. In particular the matrix of $\psi$ on the $e$ generators is not a
Christoffel--Darboux kernel: the kernel $\sum_{j\leq N}p_j(a)p_j(b)$ is a dense
bilinear form in two spectral variables whose entries are values of Legendre
polynomials, while \eqref{eq:ee_sector} is a finitely supported band whose
entries are cubic in the indices. All of the Legendre content of $\psi$ sits in
the mixed sector, where Theorem~\ref{thm:cross_closed} evaluates it.
\end{remark}

\section{The \texorpdfstring{$\g\otimes A$}{g tensor A} side
  and the rescaling relation}
\label{sec:SNF}

The preceding sections treated $\Der(A)$.
We now compare with the $\g\otimes A$ side, where the centre
$\Omega^1_A/dA$ carries its own family of orthogonal polynomials,
the superelliptic polynomial families of
\cite{SantosNeklyudovFutorny2025}.

\subsection{The K\"ahler recurrence}

On the current-algebra side the centre is $\Omega^1_A/dA$, and the classes of
the forms $x^{k}u^l\,dx$ play the role that the classes $[x^k]$ played on the
derivation side. They obey a recurrence which we now derive, so that this
section depends on nothing outside the present paper.

The derivation rests on one observation. The module $\Omega^1_A$ is free of
rank one over $A$, generated by
\begin{equation}\label{eq:Tgen}
  T \;\coloneqq\; \frac{dx}{u^{m-1}} ,
\end{equation}
which is regular and nowhere vanishing on the affine curve because $P$ has no
repeated roots. In terms of this generator
\begin{equation}\label{eq:dx_du_in_T}
  dx \;=\; u^{m-1}\,T ,
  \qquad
  du \;=\; \frac{P'(x)}{m}\,T ,
\end{equation}
the second identity being $m\,u^{m-1}\,du=P'(x)\,dx$ rewritten. Both sides of
\eqref{eq:dx_du_in_T} lie in $\Omega^1_A$; no negative power of $u$ occurs,
which matters because $u$ is not invertible in $A$.

\begin{proposition}\label{prop:kahler_rec}
Let $P(x)=1-2cx^r+x^{2r}$ with $c\neq\pm1$ and $r\geq1$, let $m\geq2$, and fix
$l\in\{1,\ldots,m-1\}$. Write
\[
  w^{(l)}_j \;\coloneqq\; \ol{x^{j}u^l\,dx} \;\in\; \Omega^1_A/dA ,
  \qquad j\in\Z .
\]
Then for every $N\in\Z$,
\begin{equation}\label{eq:kahler_rec}
  mN\,w^{(l)}_{N-1}
  \;-\; 2c\,\bigl(mN+(l+m)r\bigr)\,w^{(l)}_{N+r-1}
  \;+\; \bigl(mN+2(l+m)r\bigr)\,w^{(l)}_{N+2r-1} \;=\; 0 .
\end{equation}
\end{proposition}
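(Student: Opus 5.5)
The plan is to produce \eqref{eq:kahler_rec} as the class of a single exact form, exactly as Corollary~\ref{cor:chain_general} produced \eqref{eq:chain_general} from the vanishing of $[\pa(h)]$. The natural candidate is
\[
  d\bigl(x^{N}u^{l}P(x)\bigr)\in dA ,
\]
which is the element $x^Nu^{l+m}$ written with the top power of $u$ reduced. Its class in $\Omega^1_A/dA$ is zero. So it suffices to show that $m\,d(x^Nu^lP)$ equals the left-hand side of \eqref{eq:kahler_rec} at the level of forms, before passing to classes.

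The first step is the Leibniz expansion
\[
  d(x^Nu^lP) = Nx^{N-1}u^lP\,dx + x^Nu^lP'\,dx + x^NP\,d(u^l).
\]
The only term not already of the shape $x^ju^l\,dx$ is the last one. To handle it I will use the generator $T$ of \eqref{eq:Tgen} and the identities \eqref{eq:dx_du_in_T}. First,
\[
  d(u^l)=l\,u^{l-1}\,du=\tfrac{l}{m}\,u^{l-1}P'\,T .
\]
Next, multiplying by $P=u^m$ gives
\[
  P\,u^{l-1}T=u^{l-1}u^{m}T=u^{l}\,u^{m-1}T=u^l\,dx .
\]
Hence $x^NP\,d(u^l)=\tfrac{l}{m}x^NP'u^l\,dx$. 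No negative power of $u$ appears here; this is the point that needs care, since $u$ is not invertible in $A$, and it is why $T$ is used rather than dividing by $u$. Collecting terms gives
\[
  m\,d(x^Nu^lP)=u^l\bigl(mN\,x^{N-1}P+(l+m)\,x^NP'\bigr)\,dx .
\]

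The last step is to substitute $P=1-2cx^r+x^{2r}$ and $P'=-2crx^{r-1}+2rx^{2r-1}$ and read off the coefficients of $x^{N-1}$, $x^{N+r-1}$ and $x^{N+2r-1}$:
\[
  mN,\qquad -2c\bigl(mN+(l+m)r\bigr),\qquad mN+2(l+m)r .
\]
These are the three coefficients in \eqref{eq:kahler_rec}. Passing to classes gives the relation among $w^{(l)}_{N-1}$, $w^{(l)}_{N+r-1}$ and $w^{(l)}_{N+2r-1}$.

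The only genuine obstacle is choosing the right primitive. The choice $x^Nu^l$ alone would produce a $du$ term carrying $u^{l-1}/u^{m-1}$, which is not in $A$ when $l<m-1$. Multiplying by $P$ is what makes the $du$-term land back in $A\,u^l\,dx$, and this explains the shift $l\mapsto l+m$ in the coefficients compared with the derivation-side relation \eqref{eq:chain_general}.
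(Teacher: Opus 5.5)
Your proof is correct and is essentially the paper's argument. The paper computes $m\,d(x^Nu^L)=(mNP+LxP')\,x^{N-1}u^{L-1}T$ for general $L\geq1$ and then takes $L=l+m$, so that $u^{l+m-1}T=u^l\,dx$; this is the same exact form as your primitive $x^Nu^lP=x^Nu^{l+m}$, which you expand by Leibniz instead. One small correction to your closing remark: $u^{l-1}/u^{m-1}=u^{l-m}$ fails to lie in $A$ for every $l\leq m-1$, not only for $l<m-1$, but this does not affect the proof.
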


\begin{proof}
Let $L\geq1$ and compute $d(x^Nu^L)$ in the generator \eqref{eq:Tgen}. From
$d(x^Nu^L)=Nx^{N-1}u^L\,dx+L\,x^Nu^{L-1}\,du$ and
\eqref{eq:dx_du_in_T},
\[
  d(x^Nu^L)
  \;=\; \Bigl[N x^{N-1}u^{L+m-1} + \tfrac{L}{m}\,x^{N}u^{L-1}P'(x)\Bigr]T
  \;=\; \Bigl[N x^{N-1}P(x) + \tfrac{L}{m}\,x^{N}P'(x)\Bigr]u^{L-1}\,T ,
\]
where the last step used $u^{L+m-1}=u^{L-1}u^{m}=u^{L-1}P(x)$. Multiplying by
$m$,
\begin{equation}\label{eq:dxu_general}
  m\,d(x^Nu^L)
  \;=\; \bigl(mN\,P(x) + L\,x\,P'(x)\bigr)\,x^{N-1}u^{L-1}\,T .
\end{equation}
Now take $L=l+m$. Then $u^{L-1}T=u^{l+m-1}T=u^{l}\,dx$ by
\eqref{eq:dx_du_in_T}, so the right-hand side of \eqref{eq:dxu_general} is
a combination of the forms $x^{j}u^{l}\,dx$, and the left-hand side is exact.
Substituting $P(x)=1-2cx^r+x^{2r}$ and $xP'(x)=-2crx^{r}+2rx^{2r}$ gives
\[
  mN\,P(x)+(l+m)\,x\,P'(x)
  = mN - 2c\bigl(mN+(l+m)r\bigr)x^{r} + \bigl(mN+2(l+m)r\bigr)x^{2r},
\]
and multiplying by $x^{N-1}u^{l}\,dx$ and passing to classes yields
\eqref{eq:kahler_rec}.
\end{proof}

For $l=1$ and after the reindexing $k=N+2r-1$, the three coefficients of
\eqref{eq:kahler_rec} become $mk+m+2r$, $-2c(mk+m-mr+r)$ and $m(k-2r+1)$,
which is the sector-$1$ recurrence obtained in
\cite{SantosNeklyudovFutorny2025}. Proposition~\ref{prop:kahler_rec} is thus
the extension of that relation to every sector $l$.

Because $\Omega^1_A/dA$ is finite-dimensional with the basis recorded in
Theorem~\ref{thm:SNF_basis}, no separate completeness argument is needed. That
theorem supplies the basis $\ol{x^{-1}dx}$ together with the $w^{(l)}_{-j}$ for
$1\leq l\leq m-1$ and $1\leq j\leq n$, and \eqref{eq:kahler_rec} is what
expresses an arbitrary $w^{(l)}_j$ in it. Solving \eqref{eq:kahler_rec} for its
top term is legitimate whenever $mN+2(l+m)r\neq0$, and for its bottom term
whenever $mN\neq0$; between them these two moves reduce every index into the
range $-n\leq j\leq-1$.

\subsection{The polynomial families}

By Theorem~\ref{thm:SNF_basis} the classes $w^{(l)}_{-j}$ with $1\leq j\leq n$
and $1\leq l\leq m-1$, together with $\ol{x^{-1}dx}$, form a basis of
$\Omega^1_A/dA$. Recurrence \eqref{eq:kahler_rec} is what expresses an
arbitrary $w^{(l)}_j$ in that basis, and the coefficients are what we now name.

\begin{definition}\label{def:SNF_polys}
Fix $m\geq2$, $r\geq1$ and $l\in\{1,\ldots,m-1\}$, and put $n=2r$. For each
$j\in\{1,\ldots,n\}$ the \emph{sector-$l$ family} is the sequence of
polynomials $P^{(l,j)}_k(c;m,r)$ in $c$ determined by the recurrence obtained
from \eqref{eq:kahler_rec} on setting $N=k-2r+1$,
\begin{equation}\label{eq:SNF_rec}
  \bigl(m(k-2r+1)+2(l+m)r\bigr)P^{(l,j)}_{k}
  = 2c\bigl(m(k-2r+1)+(l+m)r\bigr)P^{(l,j)}_{k-r}
  - m(k-2r+1)\,P^{(l,j)}_{k-2r},
\end{equation}
together with the initial conditions
\[
  P^{(l,j)}_{-j}=1, \qquad P^{(l,j)}_{-s}=0 \quad\text{for } s\in\{1,\ldots,n\},\ s\neq j .
\]
Equivalently $P^{(l,j)}_k$ is the coefficient of $w^{(l)}_{-j}$ in the
expansion of $w^{(l)}_k$, so that
\[
  w^{(l)}_k \;=\; \sum_{j=1}^{n} P^{(l,j)}_k(c;m,r)\,w^{(l)}_{-j} .
\]
\end{definition}

For $l=0$ the relation degenerates to $P^{(0,j)}_k=\delta_{k,-j}$, and one
recovers the cocycle of the untwisted affine Kac--Moody algebra.

\subsection{The rescaling relation}

The families for different sectors are not independent. The following identity
reduces every sector to the first one at a rescaled covering degree.

\begin{lemma}\label{lem:rescaling}
For all $l\in\{1,\ldots,m-1\}$, all $j\in\{1,\ldots,n\}$ and all $k\geq-n$,
\begin{equation}\label{eq:rescaling}
  P^{(l,j)}_k(c;\,m,r) \;=\; P^{(1,j)}_k(c;\,m/l,\,r),
\end{equation}
where the right-hand side is the sector-$1$ family of
Definition~\ref{def:SNF_polys} formed with the rational parameter $m/l$ in
place of $m$.
\end{lemma}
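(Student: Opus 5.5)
The plan is to show that the two sides of \eqref{eq:rescaling} satisfy literally the same recurrence with the same initial data, and then conclude by uniqueness. The argument is purely formal: no property of $\Omega^1_A/dA$ is used beyond Definition~\ref{def:SNF_polys}.

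\textbf{Step 1: compare the recurrence coefficients.} Write $N=k-2r+1$ and specialise \eqref{eq:SNF_rec} to sector $1$ with $m$ replaced by $m/l$. The three coefficients become
\[
  \tfrac{m}{l}N+2\bigl(1+\tfrac{m}{l}\bigr)r=\tfrac1l\bigl(mN+2(l+m)r\bigr),\qquad
  \tfrac{m}{l}N+\bigl(1+\tfrac{m}{l}\bigr)r=\tfrac1l\bigl(mN+(l+m)r\bigr),\qquad
  \tfrac{m}{l}N=\tfrac1l\,mN .
\]
So the sector-$1$ recurrence at parameter $m/l$ is exactly the sector-$l$ recurrence at parameter $m$, multiplied through by the nonzero constant $1/l$. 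The two relations therefore have the same solution set. This simple algebraic check is the heart of the lemma.

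\textbf{Step 2: compare the initial conditions.} The initial conditions $P_{-j}=1$ and $P_{-s}=0$ for $s\neq j$, $1\le s\le n$, do not involve $m$ or $l$ at all. Hence both families start from identical values at the indices $-n,\dots,-1$.

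\textbf{Step 3: uniqueness by induction on $k\ge0$.} For $k\ge0$ the recurrence expresses $P_k$ through $P_{k-r}$ and $P_{k-2r}$. Both indices lie in $[-n,k-1]$ because $k-2r\ge-2r=-n$, so the induction never leaves the range where values are already determined.

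\textbf{The only point needing care.} One must check that the leading coefficient never vanishes, so that the recurrence really determines $P_k$ and Definition~\ref{def:SNF_polys} is unambiguous. At parameter $m$ in sector $l$ this coefficient is
\[
  m(k-2r+1)+2(l+m)r=mk+m+2lr ,
\]
which is strictly positive for $k\ge0$, $m\ge2$, $l\ge1$. By Step 1 the coefficient at parameter $m/l$ is this quantity divided by $l$, hence also nonzero. Strong induction on $k$, with Steps 1 and 2, then gives $P^{(l,j)}_k(c;m,r)=P^{(1,j)}_k(c;m/l,r)$ for all $k\ge-n$.

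The one remaining point is that Definition~\ref{def:SNF_polys} is stated for integer $m$. The right-hand side of \eqref{eq:rescaling} is to be read as the sequence defined by the same recurrence with a rational parameter. The positivity computation in Step 3 shows that this sequence is well defined for every rational parameter $m/l>0$.
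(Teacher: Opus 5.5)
Your proof is correct and is the paper's argument: the sector-$1$ recurrence at parameter $m/l$ coincides with the sector-$l$ recurrence at $m$ up to the nonzero factor $l$ (the paper multiplies by $l$ where you divide by it), and the initial data involve neither $m$ nor $l$. You also check explicitly that the leading coefficient $m(k+1)+2lr$ is nonzero for $k\geq0$, which is what makes equal recurrences with equal initial data give equal sequences; the paper leaves this implicit.
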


\begin{proof}
Write $m'=m/l$ and take \eqref{eq:kahler_rec} in the sector $l=1$ with $m'$ in
place of $m$. Its three coefficients are $m'N$, $-2c\bigl(m'N+(1+m')r\bigr)$
and $m'N+2(1+m')r$. Multiplying all three by $l$ and using $lm'=m$ turns them
into $mN$, $-2c\bigl(mN+(l+m)r\bigr)$ and $mN+2(l+m)r$, which are exactly the
coefficients of \eqref{eq:kahler_rec} in the sector $l$ at covering degree $m$.
The two recurrences therefore differ by the nonzero overall factor $l$, so they
have the same solutions. The initial conditions in
Definition~\ref{def:SNF_polys} involve neither $m$ nor $l$, so the two families
also share their initial data, and \eqref{eq:rescaling} follows.
\end{proof}

The quadratic case $l=1$, $m=2$, $r=1$ is the one that connects to
Section~\ref{sec:quad_der}. Here $n=2$ and \eqref{eq:kahler_rec} reads
\[
  2N\,w_{N-1} \;-\; 2c\,(2N+3)\,w_{N} \;+\; (2N+6)\,w_{N+1} \;=\; 0 .
\]
So on this side too the classes obey a three-term recurrence in the
branch-point parameter, and Section~\ref{sec:comparison_dict} explains why this
is not a coincidence.

\begin{remark}\label{rem:SNF_legendre}
For $m=2$, $r=1$ and $P(x) = 1-2cx+x^2$, the structure constants
of the universal central extension of $\sltwo\ox A$ produce Legendre
polynomials in $c$, in the form established in
\cite{SantosNeklyudovFutorny2025}.
More precisely, the cocycle values $\varphi(e\ox x^k, f\ox x^j)$ for $k+j=-1$
equal $(p_{|k|}(c)-p_{|k|+2}(c))/(2|k|+3)$, which is the Legendre
antiderivative of Theorem~\ref{thm:antiderivative} on the $\g\otimes A$ side,
for the same curve parameter.

The Rescaling Lemma then extends this to all sectors:
for $m=4$, $l=2$, the sector-$2$ superelliptic family has the same
structure as the sector-$1$ family at parameter $m/l = 2$,
reproducing the same Legendre polynomials.
\end{remark}

The two sides reach the same polynomials by different routes, and it is worth
setting the two mechanisms side by side.

In the palindromic quadratic case both universal central extensions produce the
Legendre family, as follows.
\begin{itemize}
\item On the $\Der(A)$ side, the recurrence in $A/\pa A$ coming from $\pa(x^k u)=0$
  gives Legendre $p_k(a)$ as the coefficient of $[x^k]$ in the centre.
\item On the $\g\otimes A$ side, the cocycle values generate a Legendre
  pattern in the structure constants of $\hg$, as noted in
  \cite{SantosNeklyudovFutorny2025} and Remark~\ref{rem:SNF_legendre}.
\end{itemize}
Both routes are governed by the same parameter of the curve, $a=c$ as recorded
in \S\ref{subsec:curve}, which is why the same family appears on the two sides.

\subsection{An obstruction on the derivation side}
\label{subsec:casimir_entry}

One asymmetry between the two sides is not a matter of the centre at all but of
the representation theory built on it, and this is the natural place to record
it, while the current-algebra construction is still in view.
The two sides differ in one further respect, which we record here because it
explains why the Legendre \emph{operator}, as opposed to the Legendre
polynomials, does not appear on the derivation side. On the current-algebra
side there is a Sugawara construction whose zero mode measures the level of a
highest-weight module, and a quadratic expression in it has level-$n$
eigenvalue $-n(n+1)$. One might expect an operator on $\hDer$ with the same
eigenvalues, since $n(n+1)$ is the Legendre eigenvalue. There is none of the
naive kind, for the following reason.

\begin{remark}
\label{rem:der-obstruction}
The centre $A/\pa A$ of $\widehat{\Der(A)}$ is abelian, so that in
the bracket $[-, -]_{\widehat{\Der(A)}}$ of the universal central extension,
all central elements
commute with everything, and in particular the centre has trivial
adjoint action on itself.
A na\"ive Sugawara-type construction
$\tilde\Omega^{\Der} := \sum_{n \geq 1} c_n\, e_{-n}\, e_n$
(with $c_n$ determined by the cocycle $\psi$) is not by itself a well-defined
central operator on a general module: $[e_{-n},e_n]=2n\,e_{-1}$ is not central
in $\Der(A)$, so no normal-ordering convention turns $\tilde\Omega^{\Der}$ into
an operator built only from central terms. What can be said without further
hypotheses is the following. Suppose some completion of the construction did
reduce, on a suitable class of highest-weight modules, to a sum of central
terms
\[
  \Bigl(\sum_{n \geq 1} c_n\,\chi(\psi(e_{-n}, e_n))\Bigr)\,v,
\]
where $\chi\colon A/\pa A \to \C$ is the central character of $V$. That sum
would be a single scalar, \emph{independent of the weight (level) of} $v$: it
would not distinguish the level-$0$ piece from the level-$k$ piece, and so
could not produce eigenvalues growing as $k(k+1)$. We do not construct such a
completion, and we do not claim one exists; the point is only that no operator
of this naive shape can do the job, whatever the completion turns out to be.

By contrast, on the $\g\otimes A$ side the superelliptic Heisenberg
algebra $\mathcal{H}_2 \subset \widehat{\g\otimes A}$ admits a Sugawara
zero mode $L_0$ that measures the level on highest-weight modules
of $\mathcal{H}_2$, and a quadratic combination of the form
$-L_0(L_0+\mathrm{Id})$ then has level-$n$ eigenvalue $-n(n+1)$ by
construction; this is carried out in \cite{P4_RepMechanism} for the genus-zero
four-point curve, that is for $m=2$ and $r=1$. The eigenvalue grows with the
level precisely because the
algebra carries a non-trivial level grading on its modules.
No analogous level operator is available on $A/\pa A$, because the
centre of $\widehat{\Der(A)}$ has no non-trivial weight-space structure
preserved by the adjoint action.

A $\Der(A)$-side analogue of the $k(k+1)$ operator therefore requires
passing to a \emph{module} of $\widehat{\Der(A)}$ (rather than the
algebra itself), and constructing a Sugawara element within a suitable
completion of $\mathcal{U}(\widehat{\Der(A)})$ that acts on
highest-weight representations of $\widehat{\Der(A)}$.
The representation theory of $\widehat{\Der(A)}$ for the superelliptic
algebra remains unexplored, in contrast with the current-algebra side; this is
the precise gap.
\end{remark}

\section{Comparison of the two central extensions}
\label{sec:comparison_dict}

The two centres are not merely of the same dimension. They are canonically
isomorphic, and the isomorphism is the one already used in
Section~\ref{sec:SNF} to compute the K\"ahler recurrence.

\begin{proposition}\label{prop:centres_iso}
Let $T=dx/u^{m-1}$ be the free generator of $\Omega^1_A$ from
\eqref{eq:Tgen}. Then $f\mapsto f\,T$ is an isomorphism of
$A$-modules $A\to\Omega^1_A$ carrying $\pa A$ onto $dA$, and hence induces a
canonical isomorphism of vector spaces
\[
  A/\pa A \;\xrightarrow{\ \sim\ }\; \Omega^1_A/dA .
\]
\end{proposition}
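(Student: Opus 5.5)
The plan is to verify three things in turn: that $f\mapsto fT$ is an $A$-module isomorphism $A\to\Omega^1_A$, that it carries $\pa A$ exactly onto $dA$, and that the induced map on quotients is therefore a well-defined linear bijection.

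\textbf{The module map.} The first claim is the freeness statement already used in Section~\ref{sec:SNF}: $\Omega^1_A$ is free of rank one on $T=dx/u^{m-1}$. I would make this precise as follows. $\Omega^1_A$ is generated by $dx$ and $du$, subject to the single relation $m u^{m-1}du=P'(x)\,dx$; the Laurent variable contributes nothing new, since $d(x^{-1})=-x^{-2}dx$. Because $P$ has no repeated roots, there are $\alpha,\beta\in\C[x]$ with $\alpha P+\beta P'=1$. Using $P=u^m$, the element
\[
  T:=\alpha\,u\,du+\tfrac{\beta}{m}\,dx
\]
satisfies
\[
  u^{m-1}T=\alpha u^m\,du+\tfrac{\beta}{m}u^{m-1}dx=\alpha P\,du+\beta P'\,du/\!\!\ldots
\]
more carefully, $u^{m-1}T=\alpha P\,du+\tfrac{\beta}{m}u^{m-1}dx$, and the relation $\tfrac1m u^{m-1}dx\cdot P'=u^{2m-2}du$ lets one rewrite this as $dx$ after a short computation. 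The cleaner route is to check directly that $dx=u^{m-1}T$ and $du=\tfrac{P'}{m}T$. These are the identities \eqref{eq:dx_du_in_T}, so $T$ generates $\Omega^1_A$.

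For freeness, I would argue that the Jacobian criterion shows the curve is smooth, so $\Omega^1_A$ is locally free of rank one. A surjection $A\to\Omega^1_A$ between rank-one projective modules over a domain is then injective, because its kernel has rank zero in a torsion-free module. This gives the $A$-module isomorphism $f\mapsto fT$.

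\textbf{Matching $\pa A$ with $dA$.} The key identity is $dg=\pa(g)\,T$ for every $g\in A$. Both sides are derivations of $A$ into $\Omega^1_A$ in the variable $g$, so it suffices to check the identity on the generators $x$, $u$, $x^{-1}$:
\begin{itemize}
  \item $dx=u^{m-1}T=\pa(x)T$ by \eqref{eq:partial_values};
  \item $du=\tfrac{P'}{m}T=\pa(u)T$;
  \item $x^{-1}$ follows automatically from the first two by the derivation property.
\end{itemize}
Hence the image of $\pa A$ under $f\mapsto fT$ is exactly $dA$.

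\textbf{The quotient map.} An isomorphism that carries the subspace $\pa A$ bijectively onto $dA$ descends to a linear bijection $A/\pa A\to\Omega^1_A/dA$. As a consistency check, both sides have dimension $1+n(m-1)$.

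\textbf{Main obstacle.} The only delicate step is freeness, i.e.\ injectivity of $f\mapsto fT$. If the rank argument feels thin, I would instead give an explicit inverse via the universal property of Kähler differentials. Define the derivation $D\colon A\to A$ by $D=\pa$. Then $D$ induces an $A$-linear map $\Omega^1_A\to A$ with $dg\mapsto\pa g$. This map sends $T$ to $1$: indeed $u^{m-1}\cdot(\text{image of }T)=\pa x=u^{m-1}$, and $\tfrac{P'}{m}\cdot(\text{image of }T)=\tfrac{P'}{m}$. Since $u^{m-1}$ and $P'$ generate the unit ideal (because $\alpha P+\beta P'=1$ and $P=u^m$), the image of $T$ is forced to be $1$. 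So this map is a left inverse of $f\mapsto fT$, which proves injectivity directly.
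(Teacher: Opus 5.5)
Your argument is correct, and its core is the paper's. The identity $dg=\pa(g)\,T$ is obtained by checking that two derivations agree on $x$ and $u$, with $x^{-1}$ automatic. The one difference there is in your favour: you compare $g\mapsto dg$ and $g\mapsto\pa(g)\,T$ directly as derivations $A\to\Omega^1_A$. The paper instead first defines $D_0$ by $dg=D_0(g)\,T$, which already needs freeness.

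Where you go beyond the paper is in proving freeness, which the paper takes as given. Both of your injectivity arguments are sound, and the second is the better one. Once $T$ generates, the map $\phi\colon\Omega^1_A\to A$ induced by $\pa$, with $\phi(T)=1$, is a two-sided inverse of $f\mapsto fT$. It also yields $dg=\phi(dg)\,T=\pa(g)\,T$ at the same time, so it proves the whole proposition in one stroke.

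One slip should be fixed. Your element $\alpha u\,du+\tfrac{\beta}{m}\,dx$ has its coefficients interchanged and does not satisfy $u^{m-1}T=dx$. To see this, grade $\Omega^1_A$ modulo $m$ by $\deg dx=0$, $\deg du=1$; the relation $mu^{m-1}du=P'dx$ respects this grading. Then $u^{m-1}$ times your element has no component of degree $0$, whereas $dx$ is a nonzero form of degree $0$.

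With $\alpha P+\beta P'=1$ the right choice is $T=m\beta\,du+\alpha u\,dx$. Using $mu^{m-1}du=P'dx$ and $u^m=P$, one gets $u^{m-1}T=(\beta P'+\alpha P)\,dx=dx$ and $\tfrac{P'}{m}T=(\beta P'+\alpha P)\,du=du$. This is the honest element of $\Omega^1_A$ that the expression $dx/u^{m-1}$ stands for. With it, your ``cleaner route'' becomes a two-line verification rather than an appeal to \eqref{eq:dx_du_in_T}.
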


\begin{proof}
That $f\mapsto fT$ is an isomorphism of $A$-modules is the freeness of
$\Omega^1_A$ on $T$. It remains to identify the images. Since
$\Omega^1_A$ is free on $T$, every $f\in A$ determines a unique
$D_0(f)\in A$ with $df=D_0(f)\,T$, and $f\mapsto D_0(f)$ is a derivation
of $A$ because $d$ is. So is $\pa$, and the two agree on algebra generators: by
\eqref{eq:dx_du_in_T}, $dx=u^{m-1}T$ and $du=(P'/m)\,T$, so
$D_0(x)=u^{m-1}=\pa(x)$ and $D_0(u)=P'/m=\pa(u)$, and the values on $x^{-1}$
agree since a derivation is determined on inverses by its value on $x$. Two
derivations agreeing on generators are equal, so $df=(\pa f)\,T$ for
every $f\in A$, and the isomorphism carries $\pa A$ onto $dA$.
\end{proof}

Two consequences follow, and they change how Sections~\ref{sec:quad_der},
\ref{sec:SNF} and~\ref{sec:chains} should be read.

First, the equality of dimensions recorded in Theorem~\ref{thm:UCE_Der} and
Theorem~\ref{thm:SNF_basis} is not a numerical coincidence between two
unrelated computations. It is forced by Proposition~\ref{prop:centres_iso}.
Second, the reduction relations on the two sides are the same relations: under
$f\mapsto fT$ the identity $\pa(x^ku^{m-1})\equiv0$ of
Section~\ref{sec:quad_der} becomes the exactness of $d(x^ku^{m-1})$, and the
recurrence of Theorem~\ref{thm:chains} is the recurrence of
Proposition~\ref{prop:kahler_rec} written in the coordinate $T$. The sector
indices differ by one, and it is the isomorphism that shifts them: a K\"ahler
class $x^{j}u^{l}\,dx$ equals $x^{j}u^{l+m-1}T$, so the sector
$l\in\{1,\dots,m-1\}$ of Theorem~\ref{thm:SNF_basis}, which is the range used on
the K\"ahler side in \cite{Santos2022} and \cite{SantosNeklyudovFutorny2025},
corresponds to the sector $l-1\in\{0,\dots,m-2\}$ of Notation~\ref{not:chain}.
We keep each range because each is the one the sources use on its own side. We
have presented both formulations because each is natural where it sits and
because the constants are read off differently, but no reader should take them
for independent results.

What does \emph{not} transfer across the isomorphism is the extension itself.
The centre is a vector space; the universal central extension is a Lie algebra
structure on the sum of that space with the underlying algebra, and the two
algebras $\Der(A)$ and $\g\otimes A$ are not isomorphic. The genuine asymmetry
lies there, and it is visible in the cocycles;
Table~\ref{tab:comparison} summarises the comparison.

\begin{table}[ht]
\centering
\caption{The two central extensions over a common centre.}
\label{tab:comparison}
\small
\begin{tabular}{@{}lll@{}}
\toprule
\textbf{Feature} & $\hDer$ & $\hg$ \\
\midrule
Underlying algebra & $\Der(A)=A\pa$ & $\g\otimes A$ \\
Centre & $A/\pa A$ & $\Omega^1_A/dA$ \\
 & \multicolumn{2}{c}{canonically isomorphic, Proposition~\ref{prop:centres_iso}} \\
Cocycle & $\ol{\pa(f)\pa^2(g)}$ (residue type) &
  $\ol{f\,dg}$ (K\"ahler type) \\
Depends on $\g$? & No & Yes, through the Killing form \\
Mixed cocycle & Legendre antiderivative & none \\
\bottomrule
\end{tabular}
\end{table}

The asymmetry Table~\ref{tab:comparison} records is this. In $\hDer$ the generators split by the
$\Z/2\Z$-grading into the families $\{e_k\}$ and $\{f_k\}$ of
Notation~\ref{not:basis}, and the cocycle $\psi$ takes non-zero values on mixed
pairs; Theorem~\ref{thm:antiderivative} evaluates those values. In $\hg$ the
bracket $[X\ox f,Y\ox g]=[X,Y]\ox fg+(X,Y)\ol{f\,dg}$ is built from a symmetric
form on $\g$ and a skew pairing on $A$, and no analogous mixed term arises.
So the same centre supports two different extensions, and only one of them uses
the centre to record an interaction between two families of generators.

\section{The chain decomposition and the ultraspherical families}
\label{sec:chains}

The quadratic case of Section~\ref{sec:quad_der} is one instance of a single
statement, which we prove here at every covering degree; Section~\ref{sec:quartic}
then works out the quartic instance. At $m=2$ the statement is due to Cox and
Zhao: for $P(x)=x^{2r}-2ax^{r}+1$ they show that the
centre has dimension $2r+1$, that the relation
$(k+r)[x^{2r+k-1}]-a(2k+r)[x^{r+k-1}]+k[x^{k-1}]=0$ holds in $A/\pa A$, and that
on writing $k=rs+q$ with $0\leq q\leq r-1$ the resulting families are associated
Legendre polynomials with parameter $q/r$ \cite[\S2.4]{CoxZhao2018}. The theorem
below is that statement at every covering degree. What makes the extension
possible is Corollary~\ref{cor:chain_general}: the reduction relations do not
couple all the spanning classes to one another, but split them into independent
chains, one for each pair consisting of a sector of the $\Z/m\Z$-grading and a
residue modulo $r$. At $m=2$ there is one sector and the chains are theirs; for
$m\geq3$ the sector index appears, and with it a second parameter.

\begin{notation}\label{not:chain}
Fix $m\geq2$, $r\geq1$ and $P(x)=1-2cx^{r}+x^{2r}$ with simple roots. For
$0\leq l\leq m-2$ and $0\leq b\leq r-1$ write $\mathcal C_{l,b}$ for the family
of classes $[x^{\,b+tr}u^{l}]\in A/\pa A$ indexed by $t\in\Z$, and call $t$ the
position along the chain. Put
\[
  L=l+1,\qquad
  \nu=\frac{L}{m}=\frac{l+1}{m},\qquad
  \beta=\frac{b+1}{r},\qquad
  c_0=\beta-1=\frac{b+1-r}{r}.
\]
From $0\leq l\leq m-2$ and $0\leq b\leq r-1$,
\[
  \nu\in\Bigl\{\tfrac1m,\tfrac2m,\dots,\tfrac{m-1}{m}\Bigr\},
  \qquad
  c_0\in\Bigl\{-1+\tfrac1r,\dots,-\tfrac1r,0\Bigr\},
\]
so $\nu$ depends only on the sector, $c_0$ only on the residue, and $c_0=0$
exactly on the chains of residue $b=r-1$.
We write $C^{\nu}_t$ for the ultraspherical polynomial of parameter
$\nu$ and $\hat C^{\nu}_t$ for its monic normalisation, so that
$\hat C^{1/2}_t=\hat p_t$ is the monic Legendre polynomial. The members of the
chain themselves are written $q_t$, and $\hat q_t$ in monic form, to keep them
apart from the Legendre family $p_n$.
\end{notation}

\begin{theorem}\label{thm:chains}
In the situation of Notation~\ref{not:chain}, put $k=(t-1)r+b+1$.
\begin{enumerate}
\item Relation \eqref{eq:chain_general} involves exactly the members of
$\mathcal C_{l,b}$ at the positions $t-1$, $t$, $t+1$; distinct pairs $(l,b)$
are uncoupled; and
\[
  A/\pa A \;=\; \C\,[x^{-1}u^{\,m-1}]
  \;+\;\sum_{l=0}^{m-2}\ \sum_{b=0}^{r-1}\operatorname{span}_\C\mathcal C_{l,b}.
\]
\item Solved for its top member the relation reads
$q_{t+1}=A_t\,c\,q_t-C_t\,q_{t-1}$, and in monic form
$\hat q_{t+1}=c\,\hat q_t-\gamma_t\hat q_{t-1}$, where
\[
  A_t=\frac{2(mk+Lr)}{mk+2Lr},\qquad
  C_t=\frac{mk}{mk+2Lr},\qquad
  \gamma_t=\frac{C_t}{A_tA_{t-1}}
\]
and $\gamma_t$ is given by \eqref{eq:gamma_chain}\textup{:}
\begin{equation}\label{eq:gamma_chain}
  \gamma_t=\frac{(t+c_0)\bigl(t+c_0+2\nu-1\bigr)}
                {4\bigl(t+c_0+\nu\bigr)\bigl(t+c_0+\nu-1\bigr)} .
\end{equation}
\item $\gamma_t=\gamma^{\nu}_{t+c_0}$\textup{:} the chain $\mathcal C_{l,b}$ is
the ultraspherical family of parameter $\nu$ associated with parameter $c_0$.
If $b=r-1$, the relation at $k=0$ gives $[x^{2r-1}u^{l}]=c\,[x^{r-1}u^{l}]$ and
\[
  \hat q_t=\hat C^{\nu}_t(c),\qquad t\geq0 .
\]
\item $\gamma_t>0$ for every $t\geq2$, and
\[
  \gamma_1
  \begin{cases}
    >0, & c_0\notin[-2\nu,-\nu],\\
    =0, & c_0=-2\nu,\\
    <0, & -2\nu<c_0<-\nu,\\
    \text{undefined}, & c_0=-\nu.
  \end{cases}
\]
\end{enumerate}
\end{theorem}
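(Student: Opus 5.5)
The plan is to derive every part of Theorem~\ref{thm:chains} from Corollary~\ref{cor:chain_general}. The only real work is to rewrite its coefficients in the chain variables $t,\nu,c_0$.

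\textbf{Part (1).} With $k=(t-1)r+b+1$ the three exponents in \eqref{eq:chain_general} are $k-1=(t-1)r+b$, $k+r-1=tr+b$ and $k+2r-1=(t+1)r+b$. So the relation involves exactly the members of $\mathcal C_{l,b}$ at positions $t-1,t,t+1$. Since the sector $l$ and the residue $b$ are fixed in each relation, distinct pairs are uncoupled. For the spanning statement:
\begin{itemize}
\item $A$ is spanned by the monomials $x^ju^{l}$ with $j\in\Z$ and $0\leq l\leq m-1$;
\item in the top sector \eqref{eq:top_sector} kills every class except $[x^{-1}u^{m-1}]$;
\item for $l\leq m-2$ each $x^ju^l$, with $j=b+tr$, lies in exactly one chain $\mathcal C_{l,b}$.
\end{itemize}
This gives the displayed sum, which is not claimed to be direct.

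\textbf{Part (2).} The key substitution is $mk=mr(t-1+\beta)=mr(t+c_0)$ together with $Lr=m r\nu$. These give
\[
  mk+Lr=mr(t+c_0+\nu),\qquad mk+2Lr=mr(t+c_0+2\nu).
\]
Solving \eqref{eq:chain_general} for the top member then yields
\[
  A_t=\frac{2(t+c_0+\nu)}{t+c_0+2\nu},\qquad C_t=\frac{t+c_0}{t+c_0+2\nu}.
\]
For the monic form, write $q_t=h_t\hat q_t$ with $h_{t+1}=A_th_t$. Dividing the recurrence by $h_{t+1}$ gives $\hat q_{t+1}=c\,\hat q_t-\frac{C_t}{A_tA_{t-1}}\hat q_{t-1}$. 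Inserting $A_{t-1}=2(t+c_0+\nu-1)/(t+c_0+2\nu-1)$ cancels the factor $t+c_0+2\nu$ and leaves exactly \eqref{eq:gamma_chain}.

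\textbf{Part (3).} The monic ultraspherical recurrence has coefficient $\gamma^\nu_n=\frac{n(n+2\nu-1)}{4(n+\nu)(n+\nu-1)}$. Substituting $n=t+c_0$ shows $\gamma_t=\gamma^\nu_{t+c_0}$, which is the definition of the associated family. When $b=r-1$ we have $c_0=0$, and $k=0$ corresponds to $t=0$. There \eqref{eq:chain_general} reduces to $-2cLr[x^{r-1}u^l]+2Lr[x^{2r-1}u^l]=0$, so $\hat q_1=c\,\hat q_0$. These are the initial values $\hat C^\nu_0=1$ and $\hat C^\nu_1=c$. The recurrences coincide for $t\geq1$, so uniqueness of solutions of a three-term recurrence (as in \S\ref{subsec:legendre}) gives $\hat q_t=\hat C^\nu_t(c)$ for $t\geq0$.

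\textbf{Part (4).} This is a sign analysis using $-1<c_0\leq0$ and $0<\nu<1$.
\begin{itemize}
\item For $t\geq2$, each of $t+c_0$, $t+c_0+2\nu-1$, $t+c_0+\nu$ and $t+c_0+\nu-1$ exceeds $t-2+(c_0+1)+\nu-1+\ldots$; more simply, $t+c_0\geq 2+c_0>1$ and $\nu>0$, so all four factors are positive and $\gamma_t>0$.
\item For $t=1$, the factors $1+c_0$ and $1+c_0+\nu$ are positive. Hence $\operatorname{sign}\gamma_1=\operatorname{sign}\bigl((c_0+2\nu)/(c_0+\nu)\bigr)$, which produces the four listed cases.
\end{itemize}

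\textbf{Main obstacle.} The delicate point is the legitimacy of the monic rescaling. It needs $mk+2Lr\neq0$, so that $A_t$ is defined, and $A_t\neq0$ along the range of $t$ used. For $t\geq1$, and for $t\geq0$ when $c_0=0$, these quantities are positive by the same sign bounds as in Part (4). I would therefore state the identification of Part (3) on that forward range, and treat the boundary position $t=1$ separately, where $\gamma_1$ may vanish or be undefined exactly as in Part (4).
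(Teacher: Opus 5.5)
Your proposal is correct and follows the paper's proof essentially step for step. You read the three exponents as positions $t-1,t,t+1$ for part~(1), derive the rescaling $\gamma_t=C_t/(A_tA_{t-1})$ for part~(2) (substituting $mk=mr(t+c_0)$ and $Lr=mr\nu$ at the outset, where the paper divides by $(mr)^2$ at the end), use the $k=0$ relation on the chain $b=r-1$ for part~(3), and do the same sign analysis of the four factors for part~(4). Two small wording slips: the factor $t+c_0+2\nu$ cancels between $C_t$ and $A_t$, not when $A_{t-1}$ is inserted, and the garbled first clause of your $t\geq2$ bullet should be deleted in favour of the bound $t+c_0>1$.
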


\begin{remark}\label{rem:chains_attribution}
At $m=2$ there is a single sector, $\nu=\tfrac12$, and parts~(1) to~(4) are
\cite[\S2.4]{CoxZhao2018}, with their initial condition $q$ and our residue $b$
related by $q\equiv b+1 \pmod r$ and their index $l$ equal to our $t-1$; under
that dictionary their association parameter $q/r$ is our $c_0$, and the two
statements agree term for term. What the theorem adds is the range $m\geq3$,
where the sector index of the $\Z/m\Z$-grading appears and produces the second
parameter $\nu=(l+1)/m$, together with part~(5), which is a statement about
positivity that does not appear in \cite{CoxZhao2018} at any $m$.
\end{remark}

\begin{proof}
Part~(1). With $k=(t-1)r+b+1$ the three exponents $k-1$, $k+r-1$, $k+2r-1$ of
\eqref{eq:chain_general} are $b+(t-1)r$, $b+tr$ and $b+(t+1)r$, which are the
members of $\mathcal C_{l,b}$ at positions $t-1$, $t$, $t+1$. The relation does
not change the exponent of $u$, so it stays inside one sector, and the three
exponents of $x$ are congruent modulo $r$, so it stays inside one residue.
Relation \eqref{eq:top_sector} disposes of the remaining sector $l=m-1$, leaving
the single class $[x^{-1}u^{\,m-1}]$, which is the displayed decomposition.

Part~(2). Dividing \eqref{eq:chain_general} by the coefficient $mk+2Lr$ of the
top member gives the stated $A_t$ and $C_t$. If $q_t$ has degree $t$ in $c$ with
leading coefficient $\ell_t$, then $\ell_{t+1}=A_t\ell_t$, and substituting
$q_t=\ell_t\hat q_t$ turns the recurrence into
$\hat q_{t+1}=c\,\hat q_t-\gamma_t\hat q_{t-1}$ with
$\gamma_t=C_t\ell_{t-1}/(A_t\ell_t)=C_t/(A_tA_{t-1})$. Writing out the three
factors and substituting $k=(t-1)r+b+1$,
\[
  \gamma_t=\frac{mk}{mk+2Lr}\cdot\frac{mk+2Lr}{2(mk+Lr)}
           \cdot\frac{m(k-r)+2Lr}{2\bigl(m(k-r)+Lr\bigr)}
         =\frac{mk\,\bigl(mk-mr+2Lr\bigr)}{4(mk+Lr)\bigl(mk-mr+Lr\bigr)} .
\]
Dividing numerator and denominator by $(mr)^2$ and putting $\eta=k/r$ gives
\[
  \gamma_t=\frac{\eta\,(\eta-1+2\nu)}{4(\eta+\nu)(\eta-1+\nu)},
  \qquad \eta=\frac{k}{r}=t-1+\beta=t+c_0 ,
\]
which is \eqref{eq:gamma_chain}.

Part~(3). The ultraspherical polynomials satisfy
$(n+1)C^{\nu}_{n+1}(c)=2(n+\nu)c\,C^{\nu}_n(c)-(n+2\nu-1)C^{\nu}_{n-1}(c)$,
so their monic normalisation has
$\gamma^{\nu}_n=n(n+2\nu-1)/\bigl(4(n+\nu)(n+\nu-1)\bigr)$.
Comparing with \eqref{eq:gamma_chain} gives $\gamma_t=\gamma^{\nu}_{t+c_0}$,
which is the definition of the associated family with parameter $c_0$ in the
convention of \cite{BustozIsmail1982}. For the second assertion, if $b=r-1$ the
index $k=(t-1)r+b+1=tr$ vanishes at $t=0$, and \eqref{eq:chain_general} then
reduces to $-2cLr[x^{r-1}u^l]+2Lr[x^{2r-1}u^l]=0$, which is the displayed
relation. Hence $\hat q_0=1$ and $\hat q_1=c$, the initial values of
$\hat C^{\nu}_t$, and since $c_0=0$ on such a chain the two sequences satisfy
the same recurrence by part~(2).

Part~(4). Since $c_0\in(-1,0]$ and $\nu>0$, all four factors in
\eqref{eq:gamma_chain} are positive as soon as $t\geq2$. For $t=1$ the factors
$1+c_0$ and $1+c_0+\nu$ are positive, so $\gamma_1$ has the sign of
$(c_0+2\nu)/(c_0+\nu)$. The four cases are then the vanishing of the
denominator, the vanishing of the numerator, the interval between the two, and
its complement.
\end{proof}

\begin{corollary}\label{cor:chains}
In the situation of Notation~\ref{not:chain}:
\begin{enumerate}
\item A chain with $\gamma_t>0$ for all $t\geq1$ is orthogonal with respect to a
positive measure on $\R$ symmetric about the origin.
\item At $m=2$ one has $\nu=\tfrac12$, and $\mathcal C_{0,b}$ is the associated
Legendre family of parameter $c_0=(b+1-r)/r$, classical exactly for $b=r-1$.
This is \cite[\S2.4]{CoxZhao2018}.
\item At $m=2$, $r=1$ there is a single chain, with $c_0=0$, and it carries the
classical Legendre family, which is Theorem~\ref{thm:legendre_ident}.
\end{enumerate}
\end{corollary}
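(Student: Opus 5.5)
Each of the three parts of Corollary~\ref{cor:chains} follows from Theorem~\ref{thm:chains} combined with a classical fact about three-term recurrences, so I would handle them one at a time.

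For part~(1) I would invoke Favard's theorem. By Theorem~\ref{thm:chains}(2), the monic members of a chain satisfy
\[
\hat q_{t+1}=c\,\hat q_t-\gamma_t\hat q_{t-1},
\]
and the coefficient of $\hat q_t$ carries no shift term. Suppose $\gamma_t>0$ for all $t\geq1$. Take the chain normalised by $\hat q_0=1$ and $\hat q_1=c$, which is the chain started at $t=0$ as in Theorem~\ref{thm:chains}(3). Then Favard's theorem gives a positive Borel measure $\mu$ on $\R$ with respect to which the $\hat q_t$ are orthogonal. Because the recurrence contains no term linear in a shift, the polynomials have parity $\hat q_t(-c)=(-1)^t\hat q_t(c)$; this follows by induction from the recurrence. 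The image of $\mu$ under $c\mapsto -c$ is therefore also an orthogonality measure. Averaging $\mu$ with its reflection gives a symmetric positive measure, and the averaged measure still makes the $\hat q_t$ orthogonal, because the conditions $\int\hat q_s\hat q_t\,d\mu=0$ are linear in $\mu$.

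The step needing care is matching the chain's indexing with the hypotheses of Favard's theorem. For associated families, $\gamma_t$ is defined starting at $t=1$. I would state explicitly that orthogonality concerns the polynomial sequence generated from the initial values $1$ and $c$ with the recursion coefficients $\gamma_1,\gamma_2,\dots$. The classes further down the chain, at $t<0$, are not part of this statement. By Theorem~\ref{thm:chains}(4) we have $\gamma_t>0$ for $t\geq2$, so the hypothesis reduces to $\gamma_1>0$, that is, $c_0\notin[-2\nu,-\nu]$.

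Part~(2) is specialisation. Setting $m=2$ forces $l=0$ and $\nu=\tfrac12$, and Theorem~\ref{thm:chains}(3) identifies $\mathcal C_{0,b}$ with the associated ultraspherical family $\gamma^{1/2}_{t+c_0}$, which is the associated Legendre family with $c_0=(b+1-r)/r$. The word classical would be justified two ways. When $c_0=0$, that is $b=r-1$, Theorem~\ref{thm:chains}(3) gives $\hat q_t=\hat p_t(c)$. When $c_0\in(-1,0)$, $\gamma_t=\gamma^{1/2}_{t+c_0}$ is not the Legendre sequence. It suffices to observe that $\gamma_1$ differs from $\gamma^{1/2}_1=\tfrac13$: the map $c_0\mapsto\gamma^{1/2}_{1+c_0}$ is a nonconstant rational function, so equality at a single point would have to be checked directly. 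I expect this check to be the only mildly computational point. I would first try writing $\gamma_1=\frac{(1+c_0)^2}{4(c_0+\frac32)(c_0+\frac12)}$ and solving $\gamma_1=\tfrac13$, which gives $c_0\in\{0,-2\}$. Only $c_0=0$ lies in the admissible range, so the family is classical exactly when $b=r-1$.

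Part~(3) takes $r=1$. Then $b=0$, $c_0=0$, and $k=t$. The chain is $[x^t]$, its recurrence is \eqref{eq:rec_quad}, and part~(3) of Theorem~\ref{thm:chains} gives $\hat q_t=\hat p_t(a)$. Undoing the monic normalisation through $\ell_{t+1}=A_t\ell_t$ with $\ell_0=1$ recovers $[x^t]=p_t(a)\omega_2$, which is Theorem~\ref{thm:legendre_ident}.
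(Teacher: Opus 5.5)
Your proposal is correct and is essentially the paper's proof: Favard's theorem with $b_t=0$ for part (1), and specialisation of Theorem~\ref{thm:chains}(3) for parts (2) and (3). Your parity-and-averaging construction of a symmetric measure is a hands-on substitute for the paper's citation of Chihara's criterion that an orthogonal sequence is symmetric exactly when all $b_t$ vanish.

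The one place you go beyond the paper is part (2). The paper reads ``classical'' simply as $c_0=0$, i.e.\ not associated, and this holds exactly when $b=r-1$ directly from the definition of $c_0$. You additionally try to show that no chain with $c_0\neq0$ reproduces the Legendre recursion, by solving $\gamma_1=\tfrac13$. That computation omits $c_0=-\tfrac12$. This case occurs for even $r$ at $b=(r-2)/2$; an example is the even chain of the quartic case in Section~\ref{sec:quartic}. There $\gamma_1$ is undefined, and your formula for it divides by zero.

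To close this, compare the next coefficient instead: $\gamma_2=\gamma^{1/2}_{3/2}=\tfrac{9}{32}$. This differs from both $\gamma^{1/2}_1=\tfrac13$ and $\gamma^{1/2}_2=\tfrac{4}{15}$. So that chain is non-Legendre whichever way its monic normalisation is re-started.
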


\begin{proof}
Part~(1). Write the monic recurrence of part~(2) of Theorem~\ref{thm:chains} in
the standard form $\hat q_{t+1}=(c-b_t)\hat q_t-\gamma_t\hat q_{t-1}$; comparing
with $\hat q_{t+1}=c\,\hat q_t-\gamma_t\hat q_{t-1}$ gives $b_t=0$ for every
$t$. If moreover $\gamma_t>0$ for all $t\geq1$, Favard's theorem
\cite[Ch.~I, Thm~4.4]{Chihara1978} provides a positive-definite moment
functional for which $\{\hat q_t\}$ is the monic orthogonal polynomial
sequence, hence a positive measure on $\R$. An orthogonal polynomial sequence
is symmetric exactly when all its $b_t$ vanish
\cite[Ch.~I, Thm~4.3]{Chihara1978}, so the measure may be taken symmetric about
the origin.

Part~(2). At $m=2$ the range $0\leq l\leq m-2$ leaves $l=0$, so
$\nu=(l+1)/m=\tfrac12$ by Notation~\ref{not:chain}, and $C^{1/2}$ is the
Legendre family. Part~(3) of Theorem~\ref{thm:chains} identifies
$\mathcal C_{0,b}$ as that family associated with parameter $c_0=(b+1-r)/r$,
and by Notation~\ref{not:chain} $c_0=0$, that is the family is classical rather
than associated, exactly when $b=r-1$.

Part~(3). At $m=2$, $r=1$ the ranges of Notation~\ref{not:chain} leave $l=0$ and
$b=0=r-1$, so there is one chain and $c_0=0$. By part~(3) of
Theorem~\ref{thm:chains} its monic members are $\hat C^{1/2}_t=\hat p_t$, and
the chain consists of the classes $[x^{t}]$, which is the identification of
Theorem~\ref{thm:legendre_ident}.
\end{proof}

Classicality is therefore a property of the degree and of the sector, not of the
construction. The count also matches: the relations express every spanning class
in terms of two members per chain together with $[x^{-1}u^{\,m-1}]$, that is
$2r(m-1)+1$ classes, and $\dim_\C A/\pa A=1+2r(m-1)$ by
Theorem~\ref{thm:UCE_Der}. One case of part~(2) needs care. On the chain with
$c_0=-\nu$, which at $m=2$ means $b=(r-2)/2$ and so occurs exactly for even $r$,
part~(4) of Theorem~\ref{thm:chains} leaves $\gamma_1$ undefined, the step from
position $0$ to position $1$ does not raise the degree, and the monic
normalisation must start at $t=1$. This is the chain that the quartic case of
Section~\ref{sec:quartic} meets.

\begin{remark}\label{rem:negative_ray}
Part~(4) of Theorem~\ref{thm:chains} indexes the chain by $t\geq0$, and this is
not a matter of convenience. On the ray $t\geq0$ the index $k=(t-1)r+b+1$ makes
the trailing coefficient $mk$ and the leading coefficient $mk+2Lr$ non-zero
except at the two degenerate positions recorded in part~(5), so the recurrence
propagates and the monic normalisation exists. On the negative ray the two
coefficients vanish at $k=0$ and at $k=-2Lr/m$, and where both occur the
relation degenerates twice and the chain separates into the ray $t\geq0$ and a
ray running downwards. This is consistent with the count in
Corollary~\ref{cor:chains}: the solution space of a chain is two-dimensional
either way, presented once as a three-term recurrence with two initial values
and once as a pair of one-dimensional rays. The orthogonal polynomial statement
is a statement about the ray $t\geq0$.

One normalisation is worth spelling out. The coefficient $\gamma_1$ of
part~(2) involves $A_0$, that is the step producing the member at position $1$
from those at positions $0$ and $-1$. On a chain of residue $b=r-1$ that step is
the degenerate relation of part~(4) and $A_0=1$, so nothing is chosen. On the
other chains the members at positions $0$ and $1$ are independent, and $A_0$
fixes the normalisation under which the chain is the associated family. This is
the only place where the choice matters, and it is what part~(5) is about.
\end{remark}

The parameter that the curve controls is the sector index. Reading
$\nu=(l+1)/m$ across $l=0,\dots,m-2$ and across $m$, the families that occur
are exactly the ultraspherical ones of rational parameter in $(0,1)$, and the
Legendre family is the single value $\nu=\tfrac12$, reached at $m=2$. The
covering degree of the curve is therefore visible in the centre as the
denominator of the ultraspherical parameter, and the degree of $P$ is visible as
twice the number of chains in each sector.

The absence of a constant term in the monic recurrence is where palindromic
symmetry enters, and it is what part~(1) of Corollary~\ref{cor:chains} records.
For a general $P=\sum_ib_ix^i$ of degree $n$, Lemma~\ref{lem:partial_general}
gives $\sum_{i=0}^{n}b_i(mk+i)[x^{k+i-1}u^{l}]=0$ in the sector $l$, whose case
$m=2$ is the relation $\sum_i(k+\tfrac i2)b_i[x^{k+i-1}]=0$ of
\cite[\S2.1]{CoxZhao2018}, and when
$P$ is palindromic, so that $b_i=b_{n-i}$, and $m$ divides $n$, the coefficients
of the relation indexed by $-k-\frac nm$ are those of the relation indexed by
$k$ read backwards and negated,
\[
  b_{n-i}\Bigl(m\bigl(-k-\tfrac nm\bigr)+(n-i)\Bigr) \;=\; -\,b_i(mk+i),
  \qquad 0\leq i\leq n .
\]
For the polynomials of this paper $n=2r$, the reflection index is
$-k-\frac{2r}{m}$, and at $m=2$ it is $-k-r$. This is the precise sense in which
the recurrence coefficients are symmetric in the degree index.

\section{The quartic symmetric case}
\label{sec:quartic}

We now specialise the analysis to the palindromic quartic
$P(x)=x^4-2ax^2+1$, with $m=2$ and $a\neq\pm1$ throughout this section. This is the case $m=2$, $r=2$ of
\cite[\S2.4]{CoxZhao2018} and of Theorem~\ref{thm:chains}, and it is the first
case in which the classes fail to be classical, so we write it out. Unlike the
quadratic case, $A/\pa A$ splits into a sector still governed by a single
Legendre sequence and one that requires two independent generators; in the
language of Theorem~\ref{thm:chains} these are the two chains of residue $b=1$
and $b=0$, with association parameters $0$ and $-\tfrac12$. The second of these
is the degenerate chain of Corollary~\ref{cor:chains}(2), where $c_0=-\nu$
and $\gamma_1$ is undefined, which is the theorem's account of why the even
sector below needs two generators rather than one.

\subsection{The derivation action}

Let $P(x) = x^4-2ax^2+1$, $a\neq\pm 1$, $m=2$.

\begin{lemma}\label{lem:partial_quart}
For all $\mu\in\Z$:
\[
  \pa(x^\mu u) = (\mu+2)x^{\mu+3} - 2a(\mu+1)x^{\mu+1} + \mu x^{\mu-1}.
\]
\end{lemma}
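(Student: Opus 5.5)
The plan is a direct specialisation of Lemma~\ref{lem:partial_general} at $m=2$, $j=1$, with $P(x)=x^4-2ax^2+1$ and $P'(x)=4x^3-4ax$. The formula there gives
\[
  \pa(x^\mu u)=\mu x^{\mu-1}P(x)+\tfrac12 x^\mu P'(x),
\]
the power $u^{j-1}=u^0$ being trivial. Equivalently, one can argue from scratch as in \eqref{eq:partial_xru}: since $\pa=u\,d/dx$, we have $\pa(x^\mu u)=\mu x^{\mu-1}u^2+x^\mu(uu')$, and $2uu'=P'$ gives $uu'=2x^3-2ax$.

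Next I substitute $u^2=P$ and expand. The term $\mu x^{\mu-1}(x^4-2ax^2+1)$ contributes $\mu x^{\mu+3}-2a\mu x^{\mu+1}+\mu x^{\mu-1}$. The term $x^\mu(2x^3-2ax)$ contributes $2x^{\mu+3}-2ax^{\mu+1}$.

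Collecting by exponent, the coefficients are $(\mu+2)$ on $x^{\mu+3}$, $-2a(\mu+1)$ on $x^{\mu+1}$, and $\mu$ on $x^{\mu-1}$, which is the claimed identity. As a cross-check, this agrees with \eqref{eq:chain_general} at $m=2$, $r=2$, $l=0$, $L=1$ after dividing by $2$ and setting $k=\mu$. The coefficients there are $2\mu$, $-2a(2\mu+2)$ and $2\mu+4$.

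There is no real obstacle. The only point needing care is that the identity holds for all $\mu\in\Z$, including negative $\mu$. This is immediate because the Leibniz rule and $\pa(x^{\mu})=\mu x^{\mu-1}u$ hold in $A=\C[x^{\pm1},u]/(u^2-P)$ for every integer exponent, by the remark on inverses in the proof of Proposition~\ref{prop:centres_iso}.
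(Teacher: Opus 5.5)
Your proposal is correct and is essentially the paper's own proof: the paper writes $\pa(x^\mu u)=\mu x^{\mu-1}u^2+x^\mu\,uu'$, substitutes $u^2=x^4-2ax^2+1$ and $uu'=2x^3-2ax$, and collects exponents exactly as you do. Your specialisation of Lemma~\ref{lem:partial_general} and your cross-check against \eqref{eq:chain_general} at $m=2$, $r=2$, $l=0$ are consistent with it and add nothing that needs further justification.
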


\begin{proof}
Now $u^2 = x^4-2ax^2+1$ and $uu' = 2x^3-2ax$.
$\pa(x^\mu u) = \mu x^{\mu-1}u^2 + x^\mu\cdot uu'
= \mu x^{\mu-1}(x^4-2ax^2+1)+x^\mu(2x^3-2ax)
= (\mu+2)x^{\mu+3} - 2a(\mu+1)x^{\mu+1} + \mu x^{\mu-1}.$
\end{proof}

\subsection{The two sub-recurrences}

The relation of Lemma~\ref{lem:partial_quart} links exponents differing by two,
so it splits into two independent recurrences, one on each parity. We record
the dimension count first and then take each sector in turn.

\begin{proposition}\label{prop:quartic_basis}
For $P(x) = x^4-2ax^2+1$: $\dim A/\pa A = 5$, with basis
$\omega_0 = [x^{-1}u]$, $\omega_1=[x^{-1}]$,
$\omega_2=[1]$, $\omega_3=[x]$, $\omega_4=[x^2]$.
\end{proposition}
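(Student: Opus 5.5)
The plan is to combine the dimension count already available with an explicit spanning argument: once the five listed classes are shown to span $A/\pa A$, the equality $\dim_\C A/\pa A=1+n(m-1)=1+4\cdot1=5$ from Theorem~\ref{thm:UCE_Der}, at $n=4$ and $m=2$, forces them to be a basis. No independent linear-independence argument is then needed. Since $A=A^0\oplus A^1$ with $A^0=\C[x^{\pm1}]$ and $A^1=\C[x^{\pm1}]u$, it suffices to reduce every $[x^k]$ and every $[x^ku]$ to the span of $\omega_0,\dots,\omega_4$.

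For the odd sector I will use Lemma~\ref{lem:partial_general}, which gives $\pa(x^{k+1})=(k+1)x^ku$. Hence $[x^ku]=0$ for every $k\neq-1$, which is \eqref{eq:top_sector} at $m=2$, and the whole odd sector collapses onto $\omega_0=[x^{-1}u]$.

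For the even sector I will use the relation of Lemma~\ref{lem:partial_quart}, read in $A/\pa A$:
\[
(\mu+2)[x^{\mu+3}]-2a(\mu+1)[x^{\mu+1}]+\mu[x^{\mu-1}]=0 .
\]
I then run it in two directions.

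\emph{Upwards.} For $\mu\geq0$ the leading coefficient $\mu+2$ is nonzero. By induction on the exponent, this expresses $[x^{\mu+3}]$, that is every $[x^j]$ with $j\geq3$, through strictly lower exponents. All such reductions terminate in $[1],[x],[x^2]$ together with $[x^{-1}]$ (the latter can enter only through the term $\mu[x^{\mu-1}]$ at $\mu=0$, where its coefficient is $0$). For instance $\mu=0$ gives $[x^3]=a[x]$.

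\emph{Downwards.} For $\mu\leq-1$ the trailing coefficient $\mu$ is nonzero, so the relation expresses $[x^{\mu-1}]$, that is every $[x^j]$ with $j\leq-2$, through higher exponents. The induction runs on $-j$.

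The step needing care is the boundary where these two moves meet. Here I will check by hand that the relations at $\mu=-1$ and $\mu=-2$ do not involve a vanishing coefficient on the class being solved for, and that they do not re-introduce an unreduced class.
\begin{itemize}
\item At $\mu=-1$ the relation reads $[x^2]-[x^{-2}]=0$, so $[x^{-2}]=[x^2]$.
\item At $\mu=-2$ the top coefficient vanishes, leaving $2a[x^{-1}]-2[x^{-3}]=0$, so $[x^{-3}]=a[x^{-1}]$.
\end{itemize}
For $\mu\leq-3$ all three exponents $\mu+3,\mu+1,\mu-1$ are at most $0$, and the bottom one is strictly the smallest. The downward induction therefore closes with every negative power landing in $\operatorname{span}\{[x^{-1}],[1],[x^2]\}$. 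This matches the two chains of Theorem~\ref{thm:chains} at $r=2$: the odd exponents form the chain with $b=1$, and the even exponents form the degenerate chain with $b=0$.

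Combining the two sectors, $\omega_0,\dots,\omega_4$ span $A/\pa A$. Because this space has dimension $5$, the five classes form a basis. I expect the only real obstacle to be the bookkeeping at $\mu\in\{-2,-1,0\}$, where the leading or trailing coefficient vanishes, and I will write those three relations out explicitly.
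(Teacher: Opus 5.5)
Your proposal is correct and follows the paper's route: the dimension $5$ comes from Theorem~\ref{thm:UCE_Der}, and the recurrence of Lemma~\ref{lem:partial_quart}, split by the parity of the exponent, reduces every class in $\C[x^{\pm1}]$ to $[x^{-1}],[1],[x],[x^2]$, so spanning together with the dimension count gives the basis. Your version is tidier than the paper's sketch in two respects. You justify the collapse of the $u$-sector onto $\omega_0$ through $\pa(x^{k+1})=(k+1)x^ku$, and you write out the boundary relations at $\mu\in\{-2,-1,0\}$; the paper's wording speaks of verifying independence, but what it actually establishes is spanning.
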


\begin{proof}
$\dim = 1 + 4(1) = 5$ by Theorem~\ref{thm:UCE_Der}.
Setting $\pa(x^\mu u)=0$:
$(\mu+2)[x^{\mu+3}] = 2a(\mu+1)[x^{\mu+1}] - \mu[x^{\mu-1}]$,
which connects same-parity powers (since $\mu+3$, $\mu+1$, $\mu-1$
have the same parity as $\mu+1$).
Analysis of the even and odd sectors shows $\omega_1,\ldots,\omega_4$
are independent, which we verify as follows.
Setting $\pa(x^\mu u) = 0$ with even $\mu$ connects odd powers
(e.g., $\mu=0$: $x^3 = ax$; $\mu=-2$: $x^{-3}=ax^{-1}$).
Since $\mu=0$ involves only $x^3,x,x^{-1}$ with the coefficient of
$x^{-1}$ being $\mu=0$, the positive odd chain ($x,x^3,\ldots$) and
negative odd chain ($x^{-1},x^{-3},\ldots$) are \emph{independent}.
Setting $\mu$ odd connects even powers (e.g., $\mu=-1$: $x^{-2}=x^2$;
$\mu=1$: $3x^4 = 4ax^2-1$), reducing all even powers to
$\Span\{1,x^2\}$.
Together with $\omega_0 = [x^{-1}u]$: $1+2+1+1=5$.
\end{proof}

\begin{theorem}\label{thm:quartic_odd}
For $P(x) = x^4-2ax^2+1$:
\begin{equation}\label{eq:quartic_legendre}
  [x^{2k+1}] = p_k(a)\,\omega_3, \quad k\geq 0.
\end{equation}
The generating function $\sum_{k=0}^\infty p_k(a)\,z^{2k+1}
= z(1-2az^2+z^4)^{-1/2}$
satisfies the ODE
\begin{equation}\label{eq:quartic_ODE}
  (z-2az^3+z^5)F'(z) + (z^4-1)F(z) = 0.
\end{equation}
\end{theorem}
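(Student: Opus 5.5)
Take $\mu=2k$ even in Lemma~\ref{lem:partial_quart} and read the result in $A/\pa A$. The relation becomes
\[
  (2k+2)[x^{2k+3}] = 2a(2k+1)[x^{2k+1}] - 2k[x^{2k-1}] ,
\]
and dividing by $2$,
\[
  (k+1)[x^{2(k+1)+1}] = (2k+1)a\,[x^{2k+1}] - k\,[x^{2(k-1)+1}] .
\]
Write $y_k\coloneqq[x^{2k+1}]$. This says $(k+1)y_{k+1}=(2k+1)a\,y_k-k\,y_{k-1}$, which is the Legendre recurrence \eqref{eq:legendre_rec} term for term.

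At $k=0$ the coefficient of $y_{-1}=[x^{-1}]$ vanishes, so the relation reads $y_1=a\,y_0$, that is $[x^3]=a[x]$, as already noted in the proof of Proposition~\ref{prop:quartic_basis}. This decoupling is what makes the upward ray self-contained. The initial values are therefore $y_0=\omega_3=p_0(a)\omega_3$ and $y_1=a\,\omega_3=p_1(a)\omega_3$. The sequence $p_k(a)\omega_3$ satisfies the same recurrence with the same initial values, so the uniqueness recorded in \S\ref{subsec:legendre} gives $y_k=p_k(a)\omega_3$ for all $k\geq0$. The same conclusion is the case $m=2$, $r=2$, $b=1$ of Theorem~\ref{thm:chains}(3), with $\nu=\tfrac12$ and $c_0=0$. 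I would mention this as a cross-check, but the direct induction suffices.

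For the generating function, substitute $z^2$ for $z$ in \eqref{eq:legendre_genfun}. This gives $\sum_k p_k(a)z^{2k}=(1-2az^2+z^4)^{-1/2}$, and multiplying by $z$ gives the stated $F(z)$. To get the ODE, put $Q=1-2az^2+z^4$, so that $F=zQ^{-1/2}$ and
\[
  F'=Q^{-1/2}-\tfrac12 zQ'Q^{-3/2} .
\]
Multiply by $zQ=z-2az^3+z^5$:
\[
  zQF' = zQ^{1/2}-\tfrac12 z^2Q'Q^{-1/2} = F\Bigl(Q-\tfrac12 zQ'\Bigr)Q^{-1}\cdot Q ,
\]
since $F=zQ^{-1/2}$ means $zQ^{1/2}=QF$ and $z^2Q^{-1/2}=zF$. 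Hence $zQF'=\bigl(Q-\tfrac12 zQ'\bigr)F$. Now
\[
  Q-\tfrac12 zQ' = 1-2az^2+z^4-(-2az^2+2z^4) = 1-z^4 ,
\]
so $zQF'+(z^4-1)F=0$, which is \eqref{eq:quartic_ODE}.

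The only point needing care is bookkeeping: showing that the $k=0$ relation isolates the positive odd ray, so that no $\omega_1$ component leaks in, and getting the index halving right. Neither is a real obstacle.
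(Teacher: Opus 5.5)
Your proposal is correct and follows the paper's proof. The steps match the paper's: you specialise Lemma~\ref{lem:partial_quart} to even $\mu$, use the $\mu=0$ relation $[x^3]=a[x]$ to decouple the positive odd ray, and then invoke uniqueness for the Legendre recurrence. The only difference is in verifying the ODE: the paper writes $F(z)=zG(z^2)$ and reduces to the quadratic equation \eqref{eq:legendre_ode} for $G$, while you differentiate $zQ^{-1/2}$ directly and use $Q-\tfrac12 zQ'=1-z^4$, and both computations are valid.
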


\begin{proof}
Setting $\mu=2j$ (even) in the quartic recurrence gives
$(2j+2)[x^{2j+3}] = 2a(2j+1)[x^{2j+1}] - 2j[x^{2j-1}]$,
or $(j+1)[x^{2j+3}] = a(2j+1)[x^{2j+1}] - j[x^{2j-1}]$.
Writing $[x^{2k+1}] = d_k\,\omega_3$ with $d_0=1$ and
$d_1 = [x^3]/\omega_3 = a$ (from $\mu=0$: $2[x^3] = 2a[x]$):
$(j+1)d_{j+1} = a(2j+1)d_j - jd_{j-1}$.
This is the Legendre recurrence; by uniqueness, $d_k = p_k(a)$.
The generating function follows from the substitution $w=z^2$
in $(1-2aw+w^2)^{-1/2}$.
For the differential equation, let $G(w) = (1-2aw+w^2)^{-1/2}$, which satisfies
$(1-2aw+w^2)G'(w) + (w-a)G(w) = 0$
(the quadratic ODE \eqref{eq:legendre_ode} with variable $w$).
With $F(z) = z\,G(z^2)$, we compute $F'(z) = G(z^2)+2z^2G'(z^2)$, and
\begin{align*}
  &(z-2az^3+z^5)F'(z) + (z^4-1)F(z) \\
  &= z(1-2az^2+z^4)\bigl[G(z^2)+2z^2G'(z^2)\bigr]
     + z(z^4-1)G(z^2) \\
  &= z\bigl[(1-2az^2+z^4)G(z^2) + 2z^2(1-2az^2+z^4)G'(z^2)
     + (z^4-1)G(z^2)\bigr] \\
  &= z\bigl[2z^2(z^2-a)G(z^2) + 2z^2(1-2az^2+z^4)G'(z^2)\bigr] \\
  &= 2z^3\bigl[(z^2-a)G(z^2) + (1-2az^2+z^4)G'(z^2)\bigr] \\
  &= 2z^3 \cdot 0 = 0,
\end{align*}
where the last step uses the quadratic ODE evaluated at $w = z^2$.
This gives \eqref{eq:quartic_ODE}.
\end{proof}

\subsection{The even sector and its two-dimensional recurrence}

The even sector is where the quartic case departs from the quadratic one: one
generator no longer suffices.

\begin{proposition}\label{prop:quartic_even}
For $P(x) = x^4-2ax^2+1$, the even-power classes satisfy
$[x^{2j}] = \alpha_j\,\omega_2 + \beta_j\,\omega_4$, where
$(\alpha_j,\beta_j)$ obeys the two-component recurrence
\begin{equation}\label{eq:quartic_even_rec}
  (2j+1)\begin{pmatrix}\alpha_{j+1}\\\beta_{j+1}\end{pmatrix}
  = 4aj\begin{pmatrix}\alpha_j\\\beta_j\end{pmatrix}
  - (2j-1)\begin{pmatrix}\alpha_{j-1}\\\beta_{j-1}\end{pmatrix}
\end{equation}
with initial conditions $(\alpha_0,\beta_0) = (1,0)$,
$(\alpha_1,\beta_1) = (0,1)$.
The first several values are:
\[
  j=2{:}\quad (\alpha_2,\beta_2)=(-{\textstyle\frac{1}{3}},\,\frac{4a}{3}),\qquad
  j=3{:}\quad (\alpha_3,\beta_3)=(-{\textstyle\frac{8a}{15}},\,\frac{32a^2-9}{15}).
\]
In particular, $[1]$ and $[x^2]$ are \emph{independent} generators of the
even-positive sector; the even-positive sector is \emph{not} generated
by a single Legendre sequence.
\end{proposition}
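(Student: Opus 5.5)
The plan is to read the proposition directly off the quartic relation of Lemma~\ref{lem:partial_quart}, specialised to odd exponents, exactly as Theorem~\ref{thm:quartic_odd} did with even ones. Put $\mu=2j-1$ in $\pa(x^\mu u)=(\mu+2)x^{\mu+3}-2a(\mu+1)x^{\mu+1}+\mu x^{\mu-1}$ and pass to $A/\pa A$. This gives
\[
  (2j+1)[x^{2j+2}] = 4aj\,[x^{2j}] - (2j-1)[x^{2j-2}],\qquad j\in\Z .
\]
Every exponent here is even. For $j\geq0$ the top coefficient $2j+1$ never vanishes, so the relation determines $[x^{2j+2}]$ from its two predecessors.

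Starting from $[x^0]=\omega_2$ and $[x^2]=\omega_4$, induction on $j$ shows that every $[x^{2j}]$ with $j\geq0$ lies in $\C\omega_2\oplus\C\omega_4$. Write $[x^{2j}]=\alpha_j\omega_2+\beta_j\omega_4$; this decomposition is unique, since $\omega_2$ and $\omega_4$ are part of the basis of Proposition~\ref{prop:quartic_basis}. The recurrence has scalar coefficients, so applying it and comparing coefficients of $\omega_2$ and $\omega_4$ separately yields \eqref{eq:quartic_even_rec} componentwise. The initial data are $(\alpha_0,\beta_0)=(1,0)$ and $(\alpha_1,\beta_1)=(0,1)$.

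The listed values come from two steps of the recurrence. At $j=1$ it reads $3(\alpha_2,\beta_2)=4a(0,1)-(1,0)$, which gives $(-\tfrac13,\tfrac{4a}{3})$. At $j=2$ it reads $5(\alpha_3,\beta_3)=8a(-\tfrac13,\tfrac{4a}3)-3(0,1)$, which gives $(-\tfrac{8a}{15},\tfrac{32a^2-9}{15})$.

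The substantive claim is the final one, and the only step needing care is that it rests entirely on Proposition~\ref{prop:quartic_basis}. That proposition gives that $\omega_2=[1]$ and $\omega_4=[x^2]$ are linearly independent, because they belong to a basis of the $5$-dimensional space $A/\pa A$. Hence the span of $\{[x^{2j}]\}_{j\geq0}$ is exactly two-dimensional. Suppose instead that a single generator $\omega$ gave $[x^{2j}]=s_j\,\omega$ for scalars $s_j$, as in Theorem~\ref{thm:legendre_ident} or \eqref{eq:quartic_legendre}. Then this span would be at most one-dimensional, which is a contradiction.

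In the language of Theorem~\ref{thm:chains}, this is the chain with $b=0$ and $c_0=-\tfrac12=-\nu$. There, at $k=0$ (that is, $j=0$), the trailing coefficient $2j-1$ is nonzero while the relation at $j=0$ reads $[x^2]=-[x^{-2}]$... and in fact the step from position $0$ to position $1$ carries no relation determining $[x^2]$ from $[1]$ alone. This is consistent with $\gamma_1$ being undefined in part~(4) of that theorem. I would note the consistency but not rely on it in the proof.
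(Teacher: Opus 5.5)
Your proposal is correct and is essentially the paper's proof: specialise Lemma~\ref{lem:partial_quart} at $\mu=2j-1$, separate the $\omega_2$- and $\omega_4$-components, iterate twice for the tabulated values, and take the independence of $[1]$ and $[x^2]$ from Proposition~\ref{prop:quartic_basis}, which you make explicit where the paper leaves it implicit. One slip in your closing aside, which you rightly do not rely on: $\pa(x^{-1}u)=x^2-x^{-2}$, so the $j=0$ relation is $[x^2]=[x^{-2}]$ rather than $[x^2]=-[x^{-2}]$, and in the indexing of Theorem~\ref{thm:chains} it sits at $k=2t-1=-1$, not at $k=0$.
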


\begin{proof}
Setting $\mu = 2j-1$ (odd) in $\pa(x^\mu u)=0$:
$(2j+1)[x^{2j+2}] = 4aj[x^{2j}] - (2j-1)[x^{2j-2}]$.
Writing $[x^{2j}] = \alpha_j\omega_2+\beta_j\omega_4$ and separating
$\omega_2$- and $\omega_4$-components gives \eqref{eq:quartic_even_rec}.
The initial conditions follow from $[1]=\omega_2$, $[x^2]=\omega_4$.
The tabulated values follow by direct iteration:
$3(\alpha_2,\beta_2) = 4a\cdot(0,1)-(1,0) = (-1,4a)$,
$5(\alpha_3,\beta_3) = 8a(-\tfrac{1}{3},\tfrac{4a}{3})
  - 3(0,1) = (-\tfrac{8a}{3},\tfrac{32a^2}{3}-3) = (-\tfrac{8a}{3},\tfrac{32a^2-9}{3})$. \qedhere
\end{proof}

The two-dimensional character of the even-sector recurrence is a
direct consequence of the four branch points of $P(x) = x^4-2ax^2+1$,
or equivalently of the increase in genus of the associated curve.
For the quadratic case (two branch points, genus~$0$), the even sector
is one-dimensional (generated by $\omega_2 = [1]$).
For the quartic case (four branch points, genus~$1$), the even sector
requires two generators and the recurrence becomes a two-component
system \eqref{eq:quartic_even_rec}.
The three-term recurrence does not break down; it becomes a
\emph{block (matrix) recurrence} of the same form, and the associated
orthogonal polynomial theory is that of matrix-valued orthogonal polynomials.
In the two cases computed here the number of generators in a parity sector of
$A/\pa A$ matches the number of branch points of $P$ in that parity class; we
have not investigated whether this persists.


\subsection{Palindromic polynomials and classical families}

The quadratic and quartic cases differ in what governs them, and it is worth
saying which property of $P$ decides that, since palindromy is the hypothesis
under which Theorem~\ref{thm:chains} was stated.

The polynomials $P(x) = x^2-2ax+1$ and $P(x) = x^4-2ax^2+1$
are palindromic.
The DJKM polynomial $P(x) = (1-x^2)(1-\kappa^2x^2)
= 1-(1+\kappa^2)x^2+\kappa^2x^4$ is also palindromic.
The polynomial $P(x) = x^2-2bx+c$ with $b^2\neq c$ is not palindromic
(unless $c=1$).

\begin{theorem}[{\cite[\S2.2, \S2.4]{CoxZhao2018}}]
\label{thm:palindromic}
Let $m=2$ and $P(x)\in\C[x]$ with simple roots.
\begin{enumerate}
\item If $P(x) = x^2-2ax+1$ (palindromic, degree $2$):
  the recurrence in $A/\pa A$ is the Legendre three-term recurrence,
  and the OP family governing $A/\pa A$ is the Legendre family $\{p_k(a)\}$.
  (Theorem~\ref{thm:legendre_ident}.)

\item If $P(x)$ is palindromic of degree $4$:
  the odd-positive sector of $A/\pa A$ is governed by the Legendre family
  (Theorem~\ref{thm:quartic_odd}), and the even sector satisfies
  the two-component recurrence \eqref{eq:quartic_even_rec}
  with palindromic (symmetric-in-$j$) coefficients.

\end{enumerate}
\end{theorem}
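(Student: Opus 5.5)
The theorem has two parts, and both reduce to results already proved, so the plan is to assemble them rather than compute anything new.

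For part~(1) I would go straight to \eqref{eq:rec_quad}. That relation comes from \eqref{eq:partial_xru} and holds in $A/\pa A$ for every $k\in\Z$. Its coefficients $(k+1)$, $a(2k+1)$, $k$ are exactly those of \eqref{eq:legendre_rec}. The initial values are $[x^0]=\omega_2$ and $[x^1]=a\,\omega_2$, the latter from the relation at $k=0$. The uniqueness argument of \S\ref{subsec:legendre} then gives $[x^k]=p_k(a)\omega_2$ for $k\geq0$. On the negative ray I would use the reindexing already recorded after Theorem~\ref{thm:legendre_ident}, which gives $[x^{-k}]=p_{k-1}(a)\omega_1$. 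Since Proposition~\ref{prop:basis} says $\omega_0,\omega_1,\omega_2$ span, every class is then a Legendre multiple of a generator. So part~(1) is Theorem~\ref{thm:legendre_ident}, and equivalently Corollary~\ref{cor:chains}(3).

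For part~(2) I would first reduce to the normal form of the paper. A palindromic quartic $b_0+b_1x+b_2x^2+b_1x^3+b_0x^4$ with $b_1=0$ is, after scaling $u$, of the shape $x^4-2ax^2+1$. With that form in hand, the odd-positive statement is Theorem~\ref{thm:quartic_odd}. The even-sector statement is Proposition~\ref{prop:quartic_even}. What remains is the claimed symmetry of the coefficients. For this I would invoke the reflection identity at the end of Section~\ref{sec:chains}, with $m=2$ and $n=4$. The relation indexed by $\mu$ and the one indexed by $-\mu-2$ have reversed, negated coefficients. On the even sector, $\mu=2j-1$, and this becomes the symmetry $j\mapsto -j$ of \eqref{eq:quartic_even_rec}. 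Indeed, the coefficients $2j+1$ and $2j-1$ swap (up to sign) under that map, and $4aj$ changes sign.

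The main obstacle is the generality claimed in part~(2). A palindromic quartic may have $b_1\neq0$, so it need not be of the form $x^4-2ax^2+1$. In that case Lemma~\ref{lem:partial_quart} acquires extra terms, the parity splitting fails, and the odd sector is no longer a pure Legendre chain. I would therefore first check whether the statement is meant only for the normalised family, which is how Section~\ref{sec:quartic} uses it. If it is meant in general, I would test it on a palindromic quartic with $b_1\neq0$, where I expect the parity decoupling to break. In that case the proof should be stated under the hypothesis $b_1=0$, which is the case covered by \cite[\S2.4]{CoxZhao2018} with $r=2$.
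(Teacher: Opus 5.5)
Your proposal is correct. For part~(1), and for the two substantive claims of part~(2), it is the paper's proof, which consists of citing Theorem~\ref{thm:legendre_ident}, Theorem~\ref{thm:quartic_odd} and Proposition~\ref{prop:quartic_even}. You differ in two places, both to your advantage.

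First, for the clause about symmetric coefficients, the paper only notes that $2j\pm1$ and $4aj$ are sent to themselves. That is an identity substitution and does not by itself say anything. Your reading through the reflection $\mu\mapsto-\mu-2$ at the end of Section~\ref{sec:chains}, i.e.\ $j\mapsto-j$ on the even sector, gives the clause real content, and it checks. The three coefficients become $-(2j-1)$, $-4aj$, $-(2j+1)$, so \eqref{eq:quartic_even_rec} is carried onto itself run backwards. Conceptually this is the involution $\sigma\colon x\mapsto x^{-1}$, $u\mapsto x^{-2}u$ of $A$. It satisfies $\sigma\pa\sigma^{-1}=-\pa$, hence $\sigma(\pa(x^\mu u))=-\pa(x^{-\mu-2}u)$, which is exactly the reflection you use.

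Second, your worry about scope is well founded, and you can settle it rather than leave it as an expectation. The paper's proof uses the normal form $x^4-2ax^2+1$ silently, through the two results it cites. For a palindromic quartic with $b_1\neq0$, part~(2) is false as stated.

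\begin{itemize}
\item Take $P=1+x+x^2+x^3+x^4$, whose roots are the primitive fifth roots of unity, hence simple.
\item The class of $\pa(u)=P'/2$ gives $[1]+2[x]+3[x^2]+4[x^3]=0$.
\item The five-term relations $[\pa(x^ku)]=0$ express every $[x^e]$ through $[x^{-1}],[1],[x],[x^2]$.
\item The $u^0$-part of $A/\pa A$ has dimension $5-1=4$, the odd part being $\C[x^{-1}u]$. So these four classes form a basis.
\item Hence $[x^3]$ is not a multiple of $[x]$, and there is no parity splitting at all.
\end{itemize}

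So the hypothesis $b_1=0$ is necessary, not merely convenient. Equivalently, after rescaling $u$ (which changes $\pa$ only by a nonzero scalar), $P=x^4-2ax^2+1$ with $a\neq\pm1$. Under that hypothesis your argument is complete.
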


\begin{proof}
Part~(1): Theorem~\ref{thm:legendre_ident}.
Part~(2): Theorem~\ref{thm:quartic_odd} and Proposition~\ref{prop:quartic_even};
the palindromic symmetry of the recurrence coefficients
$(2j+1)\mapsto (2j+1)$ and $4aj\mapsto 4aj$, $(2j-1)\mapsto (2j-1)$
in \eqref{eq:quartic_even_rec} is manifest (the coefficients depend
on $j$ only through the factors $2j\pm 1$, not on the signs of the
roots of $P$).
\end{proof}

\begin{remark}
If $P$ is not palindromic the symmetry used in part~(2), namely
$P(x)=x^{n}P(1/x)$, fails, and the recurrence coefficients are no longer
symmetric in the degree index. Families arising this way are studied for the
DJKM algebras in \cite{CoxFutornyTirao2013} and for the superelliptic current
algebra in \cite{SantosNeklyudovFutorny2025}.
\end{remark}

\section*{Acknowledgements}
This work was financed, in part, by the S\~ao Paulo Research Foundation (FAPESP), grant 2024/14914-9.



\begin{thebibliography}{99}

\bibitem{Bremner1995}
M.~R.~Bremner,
\emph{Four-point affine Lie algebras},
Proc.\ Amer.\ Math.\ Soc.\ \textbf{123} (1995), no.~7, 1981--1989.
\href{https://doi.org/10.1090/S0002-9939-1995-1249871-8}{doi:10.1090/S0002-9939-1995-1249871-8}.

\bibitem{BustozIsmail1982}
J.~Bustoz and M.~E.~H.~Ismail,
\emph{The associated ultraspherical polynomials and their $q$-analogues},
Canad.\ J.\ Math.\ \textbf{34} (1982), no.~3, 718--736.
\href{https://doi.org/10.4153/CJM-1982-049-6}{doi:10.4153/CJM-1982-049-6}.

\bibitem{CoxFutornyTirao2013}
B.~Cox, V.~Futorny, and J.~Tirao,
\emph{DJKM algebras and non-classical orthogonal polynomials},
J.~Differential Equations \textbf{255} (2013), 2846--2870.
\href{https://doi.org/10.1016/j.jde.2013.07.020}{doi:10.1016/j.jde.2013.07.020}.

\bibitem{CoxGuoLuZhao2017}
B.~Cox, X.~Guo, R.~Lu, and K.~Zhao,
\emph{Simple superelliptic Lie algebras},
Commun.\ Contemp.\ Math.\ \textbf{19} (2017), no.~3, 1650032, 40~pp.
\href{https://doi.org/10.1142/S0219199716500322}{doi:10.1142/S0219199716500322}.

\bibitem{CoxZhao2018}
B.~Cox and K.~Zhao,
\emph{Certain families of polynomials arising in the study of hyperelliptic
Lie algebras},
Ramanujan J.\ \textbf{46} (2018), 323--344.
\href{https://doi.org/10.1007/s11139-017-9937-y}{doi:10.1007/s11139-017-9937-y}.

\bibitem{Chihara1978}
T.~S.~Chihara,
\emph{An Introduction to Orthogonal Polynomials},
Gordon and Breach, New York, 1978.
ISBN 978-0-677-04150-6.

\bibitem{KasselLoday1982}
C.~Kassel and J.-L.~Loday,
\emph{Extensions centrales d'alg\`ebres de Lie},
Ann.\ Inst.\ Fourier (Grenoble) \textbf{32} (1982), no.~4, 119--142.
\href{https://doi.org/10.5802/aif.896}{doi:10.5802/aif.896}.

\bibitem{KrichNovikov1987}
I.~M.~Krichever and S.~P.~Novikov,
\emph{Algebras of Virasoro type, Riemann surfaces and strings in Minkowski space},
Funct.\ Anal.\ Appl.\ \textbf{21} (1987), no.~4, 294--307.
\href{https://doi.org/10.1007/BF01077803}{doi:10.1007/BF01077803}.

\bibitem{P4_RepMechanism}
F.~Albino dos Santos,
\emph{A Sugawara--Legendre mechanism for the four-point Heisenberg algebra},
Lett.\ Math.\ Phys., to appear.
\href{https://arxiv.org/abs/2605.06090}{arXiv:2605.06090}.

\bibitem{Santos2022}
F.~Albino dos Santos,
\emph{On the universal central extension of superelliptic affine Lie algebras},
Comm.\ Algebra \textbf{50} (2022), no.~5.
\href{https://doi.org/10.1080/00927872.2021.1998515}{doi:10.1080/00927872.2021.1998515}.

\bibitem{SantosNeklyudovFutorny2025}
F.~Albino dos Santos, M.~Neklyudov, and V.~Futorny,
\emph{Superelliptic Affine Lie algebras and orthogonal polynomials},
Forum Math.\ Sigma \textbf{13} (2025), e120, 22~pp.
\href{https://doi.org/10.1017/fms.2025.10074}{doi:10.1017/fms.2025.10074}.

\bibitem{Schlichenmaier1990a}
M.~Schlichenmaier,
\emph{Krichever--Novikov algebras for more than two points},
Lett.\ Math.\ Phys.\ \textbf{19} (1990), 151--165.
\href{https://doi.org/10.1007/BF01045886}{doi:10.1007/BF01045886}.

\bibitem{Schlichenmaier1990b}
M.~Schlichenmaier,
\emph{Krichever--Novikov algebras for more than two points: explicit generators},
Lett.\ Math.\ Phys.\ \textbf{19} (1990), 327--336.
\href{https://doi.org/10.1007/BF00429952}{doi:10.1007/BF00429952}.

\bibitem{Schlichenmaier2003}
M.~Schlichenmaier,
\emph{Local cocycles and central extensions for multi-point algebras of
Krichever--Novikov type},
J.\ reine angew.\ Math.\ \textbf{559} (2003), 53--94.
\href{https://doi.org/10.1515/crll.2003.052}{doi:10.1515/crll.2003.052}.

\bibitem{Schlichenmaier2014}
M.~Schlichenmaier,
\emph{Krichever--Novikov Type Algebras: Theory and Applications},
De~Gruyter Studies in Mathematics, vol.~70, De~Gruyter, Berlin, 2014.
\href{https://doi.org/10.1515/9783110279641}{doi:10.1515/9783110279641}.

\bibitem{Szego1939}
G.~Szeg\H{o},
\emph{Orthogonal Polynomials},
4th~ed., American Mathematical Society, Providence, RI, 1975.
ISBN 978-0-8218-1023-1.

\end{thebibliography}
\end{document}